\definecolor{darkgreen}{rgb}{0,0.45,0}
\def\@cite#1#2{[{#1\if@tempswa ,~#2\fi}]}
\DeclareMathAlphabet{\mathbf}{OT1}{cmr}{b}{n}
\def\matrixobject@{%
  \edef \next@{={\DirectionfromtheDirection@ }}%
  \expandafter \toks@ \next@ \plainxy@
  \let\xy@@ix@=\xyq@@toksix@
  \xyFN@ \OBJECT@}
\let\xy@entry@@norm=\entry@@norm
\def\entry@@norm@patched{%
  \let\object@=\matrixobject@
  \xy@entry@@norm }
\newcommand{\twocong}[2][0.5]{\ar@{}[#2] \save ?(#1)*{\cong}\restore}
\newcommand{\twoeq}[2][0.5]{\ar@{}[#2] \save ?(#1)*{=}\restore}
\newcommand{\rtwocell}[3][0.5]{\ar@{}[#2] \ar@{=>}?(#1)+/l 0.2cm/;?(#1)+/r 0.2cm/^{#3}}
\newcommand{\ltwocell}[3][0.5]{\ar@{}[#2] \ar@{=>}?(#1)+/r 0.2cm/;?(#1)+/l 0.2cm/^{#3}}
\newcommand{\ltwocello}[3][0.5]{\ar@{}[#2] \ar@{=>}?(#1)+/r 0.2cm/;?(#1)+/l 0.2cm/_{#3}}
\newcommand{\dtwocell}[3][0.5]{\ar@{}[#2] \ar@{=>}?(#1)+/u  0.2cm/;?(#1)+/d 0.2cm/^{#3}}
\newcommand{\dltwocell}[3][0.5]{\ar@{}[#2] \ar@{=>}?(#1)+/ur  0.2cm/;?(#1)+/dl 0.2cm/^{#3}}
\newcommand{\drtwocell}[3][0.5]{\ar@{}[#2] \ar@{=>}?(#1)+/ul  0.2cm/;?(#1)+/dr 0.2cm/^{#3}}
\newcommand{\dthreecell}[3][0.5]{\ar@{}[#2] \ar@3{->}?(#1)+/u  0.2cm/;?(#1)+/d 0.2cm/^{#3}}
\newcommand{\utwocell}[3][0.5]{\ar@{}[#2] \ar@{=>}?(#1)+/d 0.2cm/;?(#1)+/u 0.2cm/_{#3}}
\newcommand{\dtwocelltarg}[3][0.5]{\ar@{}#2 \ar@{=>}?(#1)+/u  0.2cm/;?(#1)+/d 0.2cm/^{#3}}
\newcommand{\utwocelltarg}[3][0.5]{\ar@{}#2 \ar@{=>}?(#1)+/d  0.2cm/;?(#1)+/u 0.2cm/_{#3}}
\DeclareMathOperator{\id}{id}
\DeclareMathOperator{\ov}{\ovee}
\newcommand{\cat}[1]{\mathrm{\mathcal #1}}
\newcommand{\thg}{{\mathord{\text{--}}}}
\newcommand{\res}[2]{\left.{#1}\right|_{#2}}
\newcommand{\spn}[1]{{\langle{#1}\rangle}}
\newcommand{\defeq}{\mathrel{\mathop:}=}
\newcommand{\cd}[2][]{\vcenter{\hbox{\xymatrix#1{#2}}}}
\renewcommand{\phi}{\varphi}
\newcommand{\C}{{\mathcal C}}
\newcommand{\D}{{\mathcal D}}
\newcommand{\E}{{\mathcal E}}
\newcommand{\G}{{\mathcal G}}
\renewcommand{\H}{{\mathcal H}}
\newcommand{\K}{{\mathcal K}}
\newcommand{\M}{{\mathcal M}}
\newcommand{\N}{{\mathcal N}}
\renewcommand{\P}{{\mathcal P}}
\newcommand{\R}{{\mathcal R}}
\let\sec=\S
\renewcommand{\S}{{\mathcal S}}
\newcommand{\U}{{\mathcal U}}
\newcommand{\xtor}[1]{\cdl[@1]{{} \ar[r]|-{\object@{|}}^{#1} & {}}}
\newcommand{\tor}{\ensuremath{\relbar\joinrel\mapstochar\joinrel\rightarrow}}
\def\hookleftarrowfill@{\arrowfill@\leftarrow\relbar{\relbar\joinrel\rhook}}
\def\twoheadleftarrowfill@{\arrowfill@\twoheadleftarrow\relbar\relbar}
\def\leftbararrowfill@{\arrowdoublefill@{\leftarrow\mkern-5mu}\relbar\mapstochar\relbar\relbar}
\def\Leftbararrowfill@{\arrowdoublefill@{\Leftarrow\mkern-2mu}\Relbar\Mapstochar\Relbar\Relbar}
\def\leftringarrowfill@{\arrowdoublefill@{\leftarrow\mkern-3mu}\relbar{\mkern-3mu\circ\mkern-2mu}\relbar\relbar}
\def\lefttriarrowfill@{\arrowfill@{\mathrel\triangleleft\mkern0.5mu\joinrel\relbar}\relbar\relbar}
\def\Lefttriarrowfill@{\arrowfill@{\mathrel\triangleleft\mkern1mu\joinrel\Relbar}\Relbar\Relbar}
\def\hookrightarrowfill@{\arrowfill@{\lhook\joinrel\relbar}\relbar\rightarrow}
\def\twoheadrightarrowfill@{\arrowfill@\relbar\relbar\twoheadrightarrow}
\def\rightbararrowfill@{\arrowdoublefill@{\relbar\mkern-0.5mu}\relbar\mapstochar\relbar\rightarrow}
\def\Rightbararrowfill@{\arrowdoublefill@{\Relbar\mkern-2mu}\Relbar\Mapstochar\Relbar\Rightarrow}
\def\rightringarrowfill@{\arrowdoublefill@\relbar\relbar{\mkern-2mu\circ\mkern-3mu}\relbar{\mkern-3mu\rightarrow}}
\def\righttriarrowfill@{\arrowfill@\relbar\relbar{\relbar\joinrel\mkern0.5mu\mathrel\triangleright}}
\def\Righttriarrowfill@{\arrowfill@\Relbar\Relbar{\Relbar\joinrel\mkern1mu\mathrel\triangleright}}
\def\leftrightarrowfill@{\arrowfill@\leftarrow\relbar\rightarrow}
\def\mapstofill@{\arrowfill@{\mapstochar\relbar}\relbar\rightarrow}
\renewcommand*\xleftarrow[2][]{\ext@arrow 20{20}0\leftarrowfill@{#1}{#2}}
\providecommand*\xLeftarrow[2][]{\ext@arrow 60{22}0{\Leftarrowfill@}{#1}{#2}}
\providecommand*\xhookleftarrow[2][]{\ext@arrow 10{20}0\hookleftarrowfill@{#1}{#2}}
\providecommand*\xtwoheadleftarrow[2][]{\ext@arrow 60{20}0\twoheadleftarrowfill@{#1}{#2}}
\providecommand*\xleftbararrow[2][]{\ext@arrow 10{22}0\leftbararrowfill@{#1}{#2}}
\providecommand*\xLeftbararrow[2][]{\ext@arrow 50{24}0\Leftbararrowfill@{#1}{#2}}
\providecommand*\xleftringarrow[2][]{\ext@arrow 10{26}0\leftringarrowfill@{#1}{#2}}
\providecommand*\xlefttriarrow[2][]{\ext@arrow 80{24}0\lefttriarrowfill@{#1}{#2}}
\providecommand*\xLefttriarrow[2][]{\ext@arrow 80{24}0\Lefttriarrowfill@{#1}{#2}}
\renewcommand*\xrightarrow[2][]{\ext@arrow 01{20}0\rightarrowfill@{#1}{#2}}
\providecommand*\xRightarrow[2][]{\ext@arrow 04{22}0{\Rightarrowfill@}{#1}{#2}}
\providecommand*\xhookrightarrow[2][]{\ext@arrow 00{20}0\hookrightarrowfill@{#1}{#2}}
\providecommand*\xtwoheadrightarrow[2][]{\ext@arrow 03{20}0\twoheadrightarrowfill@{#1}{#2}}
\providecommand*\xrightbararrow[2][]{\ext@arrow 01{22}0\rightbararrowfill@{#1}{#2}}
\providecommand*\xRightbararrow[2][]{\ext@arrow 04{24}0\Rightbararrowfill@{#1}{#2}}
\providecommand*\xrightringarrow[2][]{\ext@arrow 01{26}0\rightringarrowfill@{#1}{#2}}
\providecommand*\xrighttriarrow[2][]{\ext@arrow 07{24}0\righttriarrowfill@{#1}{#2}}
\providecommand*\xRighttriarrow[2][]{\ext@arrow 07{24}0\Righttriarrowfill@{#1}{#2}}
\providecommand*\xmapsto[2][]{\ext@arrow 01{20}0\mapstofill@{#1}{#2}}
\providecommand*\xleftrightarrow[2][]{\ext@arrow 10{22}0\leftrightarrowfill@{#1}{#2}}
\providecommand*\xLeftrightarrow[2][]{\ext@arrow 10{27}0{\Leftrightarrowfill@}{#1}{#2}}
\numberwithin{equation}{section}
\theoremstyle{plain}
\newtheorem{Thm}{Theorem}
\newtheorem{Prop}[Thm]{Proposition}
\newtheorem{Lemma}[Thm]{Lemma}
\newtheorem*{Lemma*}{Lemma}
\theoremstyle{definition}
\newtheorem{Defn}[Thm]{Definition}
\newtheorem{Ex}[Thm]{Example}
\newtheorem{Exs}[Thm]{Examples}
\newtheorem{Rk}[Thm]{Remark}
\newcommand{\mnd}[1]{\mathsf{#1}}
\newcommand{\Kl}[1]{\cat{Kl}(\mathsf{#1})}
\begin{document}
\leftmargini=2em \title[Abstract hypernormalisation]{Abstract hypernormalisation, and
  normalisation-by-trace-evaluation for generative systems}
\author{Richard Garner} \address{School of Mathematical and Physical
  Sciences, Macquarie University, NSW 2109, Australia}
\email{richard.garner@mq.edu.au}

\subjclass[2000]{Primary: }
\date{\today}

\thanks{The support of Australian Research Council grants DP160101519
  and FT160100393 is gratefully acknowledged. Thanks also to the
  referees, whose constructive and robust analyses have improved the
  paper immensely.}

\begin{abstract}
  Jacobs' \emph{hypernormalisation} is a construction on finitely
  supported discrete probability distributions, obtained by
  generalising certain patterns occurring in quantitative information
  theory. In this paper, we generalise Jacobs' notion in turn, by
  describing a notion of hypernormalisation in the abstract setting of
  a symmetric monoidal category endowed with a linear exponential
  monad---a structure arising in the categorical semantics of linear
  logic.
  
  We show that Jacobs' hypernormalisation arises in this fashion from
  the finitely supported probability measure monad on the category of
  sets, which can be seen as a linear exponential monad with respect
  to a non-standard monoidal structure on sets which we term the
  \emph{convex monoidal structure}. We give the construction of this
  monoidal structure in terms of a quantum-algebraic notion known as a
  \emph{tricocycloid}.

  Besides the motivating example, and its natural generalisations to
  the continuous context, we give a range of other instances of our
  abstract hypernormalisation, which swap out the side-effect of
  probabilistic choice for other important side-effects such as
  non-deterministic choice, logical choice via tests in a Boolean
  algebra, and input from a stream of values.

  Finally, we exploit our framework to describe a
  normalisation-by-trace-evaluation process for behaviours of various
  kinds of coalgebraic generative systems, including labelled
  transition systems, probabilistic generative
  systems~\cite{Glabbeek1990Reactive}, and stream
  processors~\cite{Hancock2009Representations}.
\end{abstract}
\maketitle

\section{Introduction}
\label{sec:introduction}

\emph{Hypernormalisation} was introduced by Jacobs
in~\cite{Jacobs2017Hyper} in order to provide, among other things, a
smooth category-theoretic formulation of certain
concepts~\cite{McIver2014Abstract} of quantitative information flow.
The main theoretical contribution of this paper is to analyse, in turn,
Jacobs' notion of hypernormalisation, showing how it arises naturally
out of well-studied category-theoretic concepts, and how it
generalises to other settings. The main application of this paper
uses our framework in order to relate bisimilarity and trace
equivalence for a range of coalgebraic generative systems. As is
well-known, for such systems, bisimulation equivalence is a finer
relation than trace equivalence, so that multiple different behaviours
(i.e., states up to bisimilarity) may have the same underlying trace.
We will describe a process of ``normalisation-by-trace-evaluation''
which normalises any given behaviour to a maximally-efficient one with
the same trace.

To motivate our development, it will be useful to take a step back,
and first describe the ideas of quantitative information flow which
motivated~\cite{Jacobs2017Hyper}. These ideas are concerned with the
following question: given a probabilistic process $P$ which, when run
on a \emph{private} input returns a \emph{public} output, how can we
measure the leakage inherent in $P$, that is, the extent to which
knowledge of the output allows an adversary to infer knowledge about
the input?

The natural way of answering this question is information-theoretic:
we model the input and output types as finite sets $A$ and $B$, and
$P$ as a discrete channel, that is, a function assigning to each input
in $A$ a discrete probability distribution over possible outputs in
$B$. Using this, any prior distribution $\pi$ on $A$ yields a joint
distribution on $A \times B$ with associated marginals on $A$
(viz.~$\pi$) and $B$; we can now define the leakage of $P$ as the
``mutual information'', i.e., the difference between the sum
of the marginals' entropies and the entropy of the joint distribution; said
another way, the leakage of $P$ is the difference between the entropy
of the prior distribution on $A$, and the expected entropy (over all
possible observed outputs) of the posterior distribution on $A$.

Now, if $Q$ is another process of input type $A$---but possibly
different output type---then we can compare their drops in entropy to
ascertain whether $P$ or $Q$ leaks more information about $A$.
\emph{Prima facie}, the answer to this depends not only on a choice of
prior $\pi$ for $A$, but also on the flavour of entropy chosen for the
calculations---which has to do with questions like: must a successful
attack guess the input precisely, or would partial knowledge suffice?
A key contribution of~\cite{McIver2014Abstract} is to exhibit a
leakage-ordering which is \emph{robust}, in the sense of being largely
independent of these choices---and moreover, depends not on the
channels $P$ and $Q$ themselves, but only on their so-called
\emph{abstract channel} denotations.

According to~\cite{McIver2014Abstract}, an \emph{abstract channel} of
type $A$ is a function $\D A \rightarrow \D(\D A)$ from the set $\D A$
of probability distributions on $A$ to the set of finitely supported
probability distributions on $\D A$; said another way, it is a
discrete time-homogeneous Markov chain with set of states $\D A$. In
the case of the abstract channel $P^r$ associated to a channel $P$
from $A$ to $B$, this Markov chain encodes the probabilities that a
given prior distribution will update to a given posterior distribution
on account of an observation in $B$. Crucially, however, the identity
of these observations is suppressed, which is what makes the
abstract channel~``abstract''.

Now in~\cite{McIver2014Abstract}, the status of probabilistic channels as
a kind of monadic computation~\cite{Moggi1991Notions} is clearly
acknowledged; indeed, as is well known, the operation $A \mapsto \D A$
underlies the finitely supported discrete distribution monad $\mnd{D}$ on the
category of sets. However, the construction
in~\cite{McIver2014Abstract} of an abstract channel $P^r$ from a
channel $P$ does not exploit this fact. This is
where Jacobs'~\cite{Jacobs2017Hyper} enters the picture; one of its
objectives is to explain the construction $P \mapsto P^r$ via the
calculus of monadic computation, so providing a framework for
generalisation beyond the finite discrete case.

In Jacobs' analysis, the abstract
channel $P^r$ associated to a discrete channel
$P \colon A \rightarrow \D B$ is found as a composite
\begin{equation}\label{eq:36}
  \D A \xrightarrow{\tilde P} \D(A \times B) \xrightarrow{\N} \D(\D A
  \times B) \xrightarrow{\D(\pi_1)} \D(\D A)\rlap{ ,}
\end{equation}
whose three terms we now explain. The last map is the action of the
monad $\D$ on arrows by pushforward---which in this case takes a joint
distribution on $\D A \times B$ to the associated marginal
distribution on $\D A$. The first map, by contrast, takes a prior
distribution $\pi$ on $A$ to the associated joint distribution on
$A \times B$ given by $a,b \mapsto \pi(a) \cdot (Pa)(b)$. In
categorical terms, this $\tilde P$ arises as the composite
\begin{equation}\label{eq:50}
  \D A \xrightarrow{\D(1, P)} \D(A \times \D B)
  \xrightarrow{\mathsf{str}} \D\D(A \times B) \xrightarrow{\mu} \D(A
  \times B)\rlap{ ,}
\end{equation}
where $\mu$ is the monad multiplication, and
$\mathsf{str}$ is its cartesian \emph{strength}.

The remaining part of~\eqref{eq:36} is the map $\N$, which is Jacobs'
hypernormalisation. It can be described as follows. Given a joint
distribution $\omega \in \D(A \times B)$, we have for each $b \in B$
the marginal probability $\omega(b) = \Sigma_{a} \omega(a,b)$; and
when this is non-zero, we have also the conditional distribution
$\omega_{A\mid b}$ on $A$ with
$\omega_{A \mid b}(a) = \omega(a,b) / \omega(b)$. Now
$\N(\omega) \in \D(\D A \times B)$ is the distribution which takes the
value $(\omega_{A \mid b}, b)$ with probability $\omega(b)$, for all
$b \in B$ with $\omega(b) > 0$ (note that it is important for this
that $B$ is a \emph{finite} set).
In particular, $\N$ encodes the process of normalising a
non-zero sub-probability distribution $\omega(\thg, b)$ on $A$ to the
probability distribution $\omega_{A \mid b}$---while avoiding the
impossibility of normalising $\omega(\thg, b)$ when it is
everywhere-zero. This explains the name \emph{hypernormalisation}
chosen for this map.

In~\cite{Jacobs2017Hyper}, Jacobs introduces hypernormalisation by an
element-based definition, and verifies by hand a number of desirable
equational properties---the implication being that, to generalise away
from the finite discrete setting, it would suffice to define a
corresponding map $\N$, and to verify the corresponding properties.
Our objective here is to replace this \emph{axiomatic} approach with a
\emph{synthetic} one: rather than defining hypernormalisation on a
case-by-case basis, we will show how it arises naturally from a
certain well-known categorical framework. This will, in particular,
allow us in a principled way to generalise hypernormalisation (and
so also channel-abstraction) to diverse other settings.

\looseness=-1
The framework in question is that of a symmetric monoidal category
endowed with a \emph{linear exponential monad}. A linear exponential
monad $\mnd T$ is one for which the symmetric monoidal structure of
the base category lifts to the category of $\mnd T$-algebras and there
becomes finite coproduct. Linear exponential monads originate in the
categorical semantics of linear logic~\cite{Benton1993Linear}, but
also have applications in studying abstract differentiation in
mathematics and computer science~\cite{Blute2006Differential}. A key
observation of this paper is that linear exponential monads
\emph{always} have an associated notion of hypernormalisation,
satisfying all the equational axioms one may hope for.

The motivating example fits into this framework
via the discrete distribution monad $\mnd{D}$ on the category
of sets. This turns out to be a linear exponential monad, but with
respect to a non-standard monoidal structure on $\cat{Set}$ which we
term the \emph{convex monoidal structure}. The convex monoidal
structure has the empty set as unit and binary tensor given by
\begin{equation}\label{eq:34}
  A \star B = A + \big((0,1) \times A \times B\big) + B
\end{equation}
where $(0,1)$ denotes the open interval; while its associativity
constraints are controlled by a map
$v \colon (0,1) \times (0,1) \rightarrow (0,1) \times (0,1)$ which
encodes a particular change of coordinates for points of the
topological $2$-simplex.

The unfamiliar aspect here is the monoidal structure~\eqref{eq:34};
but it turns out that this can, in turn, be understood via another
established piece of category theory. A
\emph{tricocycloid}~\cite{Street1998Fusion} in a symmetric monoidal
category is an object $H$ endowed with an invertible map
$v \colon H \otimes H \rightarrow H \otimes H$ satisfying suitable
axioms; and by a general construction of \emph{loc.~cit.}, any
tricocycloid $H$ in $\C$ gives a new monoidal structure on $\C$
defined by $A \star B = A + H \otimes A \otimes B + B$.

\looseness=-1 Typical examples of tricocycloids arise in the
$k$-linear context from Hopf algebras~\cite{Sweedler1969Hopf} and
multiplier Hopf algebras~\cite{Van-Daele1994Multiplier}; but their
relevance here stems from the fact that $(0,1)$ is a tricocycloid in
the cartesian monoidal category of sets, so that we can derive the
convex monoidal structure via the general construction described
above. We refer to $(0,1)$ endowed with the map $v$ as the
\emph{convex tricocycloid}; it is a basic combinatorial object which
lies at the heart of probability theory.

Beyond recapturing our motivating example, we also give a range of
other examples of hypernormalisation arising from other linear
exponential monads. One obvious direction of generalisation we pursue
exhibits various \emph{continuous} probability monads as linear
exponential monads. However, and more interestingly, we also give a
range of \emph{non}-probabilistic examples. From the well-known
perspective of~\cite{Moggi1991Notions, Plotkin2002Notions}, monads
encode computational effects, and the discrete distribution monad
$\mnd D$ in particular encodes (finite) probabilistic choice. We will
see that the monads encoding other computational effects, including
non-deterministic choice, logical choice over tests valued in a
Boolean algebra, and input from a stream of $B$-values, also admit
hypernormalisation.

These non-probabilistic examples will play a role in the main
application of this paper, which uses our abstract hypernormalisation
to relate \emph{bisimilarity} and \emph{trace equivalence} for
automata of a certain kind. Given a monad $\mnd T$ on a category $\C$
with finite coproducts, and a finite set $A$, we define a
\emph{generative $\mnd T$-system with alphabet $A$} to be a set of
states $S$ with a transition map $S \rightarrow T(\sum_{a \in A} S)$.
For suitable choices of $\mnd T$ this yields labelled transition
systems, \emph{probabilistic generative
  systems}~\cite{Glabbeek1990Reactive}, a ``logical'' analogue of
probabilistic generative systems, and the \emph{stream processors}
of~\cite{Hancock2009Representations}.

In these examples, there are various notions of equivalence on states
available~\cite{Glabbeek1990Linear}, the finest being
\emph{bisimilarity}, and the coarsest \emph{trace equivalence}. On the
one hand, it is well-known that bisimilarity can be captured
category-theoretically in a number of ways~\cite{Staton2011Relating},
the cleanest of which is that two states are bisimilar precisely when
they become equal under the unique map to the \emph{final} generative
$\mnd T$-system. This final object exists under very mild conditions,
and may be thought of as an \emph{object of behaviours} $\mathsf{Beh}$
for generative $\mnd T$-systems.

On the other hand, there is a category-theoretic characterisation of
trace equivalence, originally due to~\cite{Power1999Coalgebraic} and
developed further in \emph{inter alia}~\cite{Hasuo2007Generic}; this
involves viewing a generative $\mnd T$-system as a coalgebra
$S \tor \sum_{a \in A} S$ in the \emph{Kleisli category} of $\mnd T$,
and seeking a final such coalgebra in $\cat{Kl}(\mnd T)$. When this
exists, it provides an object of traces, and so a notion of trace
equivalence on states, given as before by equality under the unique
map to the object of traces. However, since $\Kl{\mnd T}$ is a
badly-behaved category, existence of a final coalgebra is not
guaranteed---and in particular, one does not exist in our examples.

Now, the badly-behaved $\Kl{\mnd T}$ can be identified with the full
subcategory of \emph{free} algebras within the much better-behaved
category $\C^{\mnd T}$ of Eilenberg--Moore $\mnd T$-algebras. So if we
define an \emph{object of traces} for generative $\mnd T$-systems
$\mathsf{Tr}$ to be a final object among coalgebras
$S \rightarrow \sum_{a \in A} S$ not in $\Kl{\mnd T}$ but rather in
$\C^{\mnd T}$, then existence will hold under very mild conditions. In
particular, this is the case in each of our examples---for example,
the object of traces for probabilistic generative systems is precisely
the set of probability distributions on $A^\mathbb{N}$---and the
induced notion of trace equivalence is precisely the expected one.

Now, when the objects of behaviours and traces for generative
$\mnd T$-systems exist, there is a canonical map
$\mathsf{reflect} \colon \mathsf{Beh} \rightarrow \mathsf{Tr}$ which
captures the fact that bisimilar states are also trace equivalent. But
as we will see, when the monad $\mnd T$ admits hypernormalisation,
more is true: the reflection map admits a canonical section
$\mathsf{reify} \colon \mathsf{Tr} \rightarrow \mathsf{Beh}$ which
produces a ``minimal'' behaviour realising each trace---where,
informally, the minimality of a behaviour expresses that it carries
out the \emph{least possible} amount of $\mnd T$-computation in order
to determine the next output token from $A$. In particular, the
idempotent
$\mathsf{reify} \circ \mathsf{reflect} \colon \mathsf{Beh} \rightarrow
\mathsf{Beh}$ implements 
``normalisation-by-trace-evaluation'' for behaviours of generative
$\mnd T$-systems.


We conclude this introduction with a brief overview of the contents of
the paper. We begin Section~\ref{sec:hypernormalisation-1} by
recalling Jacobs' notion of hypernormalisation; as in~\eqref{eq:36},
this is a certain map associated to the monad $\mnd{D}$ for finitely
supported probability distributions on the category of sets. The
algebras for this monad are \emph{abstract convex
  spaces}---the variety of algebras generated by the quasivariety of
convex subsets of affine spaces
(cf.~\cite{Neumann1970On-the-quasivariety}), and our first
contribution is to explain how hypernormalisation can be understood in
terms of finite coproducts in the category of abstract convex spaces.
The key is the (well-known) observation that the binary coproduct of
abstract convex spaces $A$ and $B$ is given by $A\star B$ as
in~\eqref{eq:34}, endowed with a suitable convex structure.

In fact, the results just described do not \emph{quite} recapture
hypernormalisation. In Section~\ref{sec:opmonoidal-comonads}, we
rectify this, and in doing so arrive at the key idea of this paper:
that an appropriate general setting for hypernormalisation is a
symmetric monoidal category endowed with a linear exponential monad.
In this setting, we define a notion of hypernormalisation, and show
that it inherits almost all of the good equational properties of
hypernormalisation noted in~\cite{Jacobs2017Hyper}; we also show that
the qualifier ``almost'' can be removed so long as the symmetric
monoidal structure on the base category is \emph{co-affine}---meaning
that the unit object is initial---and the linear exponential monad
$\mnd T$ is \emph{affine}---meaning that $T1 \cong 1$. We also show
that, when $\C$ is a category with finite products and distributive
finite coproducts, and $\mnd T$ has a cartesian strength, we have a perfect
analogue of the channel-to-abstract-channel construction following
Jacobs' pattern~\eqref{eq:36}.

In Section~\ref{sec:conv-mono-struct}, we exhibit the motivating
example of hypernormalisation as an instance of our general
setting by showing that the convex monoidal structure~\eqref{eq:34} is
indeed a symmetric monoidal structure on $\cat{Set}$ for
which the discrete distribution monad $\mnd D$ is a linear exponential
monad. As discussed, we do this by first constructing the convex
tricocycloid, and applying the general construction
of~\cite{Street1998Fusion}. Along the way, we fill out some aspects of
the theory of tricocycloids, in particular relating them to
\emph{operads} in the sense of~\cite{May1972The-geometry}.

In Sections~\ref{sec:examples} and~\ref{sec:tric-effect-algebr}, we
turn to examples of hypernormalisations beyond the motivating one. In
Section~\ref{sec:examples}, we exhibit the structure required to
obtain hypernormalisation maps for three non-discrete probability
monads: the expectation monad on
sets~\cite{Jacobs2016The-expectation}; the monad of Radon probability
measures on compact Hausdorff spaces~\cite{Mislove2011Probabilistic};
and the Kantorovich monad~\cite{Breugel2005The-metric} on $1$-bounded
metric spaces. Then in Section~\ref{sec:tric-effect-algebr}, we turn
to combinatorial examples, including the monad for total finite
non-determinism $\mnd P_f^+$; a monad for ``logical distributions''
valued in a Boolean algebra $B$; and the monad for $B$-ary branching
trees.

Finally, in Section~\ref{sec:appl-stre-proc}, we develop our
application of abstract hypernormalisation to
normalisation-by-trace-evaluation for generative $\mnd T$-systems. We
begin by introducing these systems, and characterise the associated
objects of \emph{behaviours} and \emph{traces}, along with the
\emph{behaviour map} and \emph{trace map} associated to any generative
$\mnd T$-system. We then explain our framework for
normalisation-by-trace-evaluation in the context of a monad $\mnd T$
which admits hypernormalisation. Finally, we describe this process
explicitly for a range of examples of monads admitting
hypernormalisation drawn from elsewhere in the paper.

\section{Hypernormalisation and convex coproducts}
\label{sec:hypernormalisation-1}

\subsection{Hypernormalisation}
\label{sec:hypernormalisation}
In this section, we first recall from~\cite{Jacobs2017Hyper} the
notion of hypernormalisation for finitely supported discrete
probability distributions, and then explain its relation to coproducts
in the category of \emph{abstract convex spaces}---the category of algebras
for the discrete distribution monad.

\begin{Defn}
  \label{def:1}
  A \emph{finitely supported sub-probability distribution} on a set
  $A$ is a function $\omega \colon A \rightarrow [0,1]$ such that
  $\mathrm{supp}(\omega)$ is finite and $\omega(A) \leqslant 1$. We
  call $\omega$ a \emph{probability distribution} if $\omega(A) = 1$.
\end{Defn}
Here, we write $\mathrm{supp}(\omega)$ for the set
$\{a \in A : \omega(a) > 0\}$ and, for any $B \subseteq A$, write
$\omega(B)$ for $\sum_{b \in B} \omega(b)$. It will often be
convenient to write a sub-probability distribution $\omega$ on $A$ as
a formal convex combination
\begin{equation}\label{eq:28}
  \sum_{a \in \mathrm{supp}(\omega)}\!\! \omega(a) \cdot a
\end{equation}
of elements of $A$; so, for example,
$\omega \colon \{a,b,c,d\} \rightarrow [0,1]$ with
$\omega(a) = \omega(c) = \tfrac 1 3$, $\omega(d) = \tfrac 1 6$ and
$\omega(b) = 0$ could also be written as
$\tfrac 1 3 \cdot a + \tfrac 1 3 \cdot c + \tfrac 1 6 \cdot d$.
\begin{Defn}
  \label{def:2}
  If $\omega$ is a sub-probability distribution on $A$ such that
  $\omega(A) > 0$, then its \emph{normalisation} is the probability
  distribution $\overline \omega$ with $\overline{\omega}(a) =
  \omega(a) / \omega(A)$.
\end{Defn}
Of course, if $\omega \colon A \rightarrow [0,1]$ is everywhere-zero,
then we cannot normalise it. One way of understanding Jacobs'
hypernormalisation~\cite{Jacobs2017Hyper} is as a principled way of
avoiding this singularity. In the definition, and henceforth, we write
$\D A$ for the set of probability distributions on a set $A$.

\begin{Defn}
  \label{def:3}
  Let $A$ be a set and $n \in \mathbb N$. The \emph{$n$-ary
    hypernormalisation} function
  \begin{equation*}
    \N \colon \D(\underbrace{A + \cdots + A}_{n}) \longrightarrow
    \D(\underbrace{\D A + \cdots + \D A}_n)
  \end{equation*}
  is given as follows. For $1 \leqslant i \leqslant n$ and $a \in
  A$, we write $(a, i)$ for the image of $a$
  under the $i$th coproduct injection $A \rightarrow A + \cdots + A$. Each
 $\omega \in \D(A + \cdots + A)$ yields $n$ sub-probability
  distributions $\omega_i$ on $A$ with
  $\omega_i(a) = \omega(a, i)$, and we take
  \begin{equation}\label{eq:29}
    \N(\omega) = \sum_{\substack{1 \leqslant i \leqslant
        n\\\omega_i(A) > 0}} \omega_i(A) \cdot (\overline{\omega_i}, i)\rlap{ .}
  \end{equation}
\end{Defn}
In other words $\N(\omega)$ ``normalises the \emph{non-zero}
sub-probability distributions among $\omega_1, \dots, \omega_n$ and
records the total weights''. Note that this agrees with the
description of hypernormalisation given in the introduction: the only
difference is that there, we wrote $B$ for the finite set
$\{1, \dots, n\}$, and wrote $A \times B$ for $A + \dots + A$, so that
hypernormalisation became a map
$\D(A \times B) \rightarrow \D(\D A \times B)$.


As suggested in Section~8 of~\cite{Jacobs2017Hyper}, one may
generalise the hypernormalisation maps by replacing the $n$ copies of
$A$ with $n$ possibly distinct sets, yielding maps
\begin{equation}\label{eq:43}
\N \colon \D(A_1 + \dots + A_n) \rightarrow \D(\D A_1 + \dots + \D
A_n)
\end{equation}
defined in an entirely analogous manner to before. In this paper it
will be this asymmetric version of hypernormalisation that we use. In
fact, the key features of hypernormalisation are fully alive in the
$n = 2$ case, and so in large part we will concentrate on the binary
hypernormalisation maps
\begin{equation}\label{eq:2}
  \N \colon \D(A+B) \rightarrow \D(\D A + \D B)\rlap{ .}
\end{equation}

Of particular note is the case of~\eqref{eq:2} where $B$ is a
singleton set $1 = \{\ast\}$, so that we have a map
$\N \colon \D(A + 1) \rightarrow \D(\D A + \D1) \cong
\D(\D A + 1)$.
An element of $\D(A+1)$ can be identified with a
sub-probability distribution $\omega$ on $A$, with the one additional
point $\ast$ necessarily being given the weight $1 - \omega(A)$;
likewise, an element of $\D(\D A + 1)$ can be identified with a
sub-probability distribution on $\D A$. Under these identifications,
the action of $\N$ can be described as follows:
\begin{itemize}[itemsep=0.25\baselineskip]
\item If $\omega$ is the zero sub-probability distribution on $A$,
  then $\N(\omega)$ is the zero sub-probability distribution on $\D
  A$;
\item Otherwise, $\N(\omega)$ is the sub-probability distribution on
  $\D A$ which assigns the weight $\omega(A)$ to the single point
  $\overline{\omega}$.
\end{itemize}

\subsection{Convex coproducts}
\label{sec:convex-coproducts}
As explained in the introduction, hypernormalisation is closely bound
up with the \emph{discrete distribution monad} $\mnd{D}$ on the
category of sets. We now recall this monad, and explain how
hypernormalisation is related to coproducts in the category of
$\mnd{D}$-algebras.

\begin{Defn}
  \label{def:4}
  The functor $\D \colon \cat{Set} \rightarrow \cat{Set}$ takes
  $A$ to $\D A$ on objects; while on maps, $\D f \colon \D A
  \rightarrow \D B$ sends $\omega \in \D
  A$ to the \emph{pushforward} $f_\ast(\omega) \in
  \D B$ 
  given by
  \begin{equation}\label{eq:52}
    f_\ast(\omega)(b) = \omega(f^{-1}(b))\rlap{ .}
  \end{equation}
  The unit $\eta \colon 1_\cat{Set} \Rightarrow \D$ and multiplication
  $\mu \colon \D \D \Rightarrow \D$ of the
  discrete distribution monad $\mnd D$ have respective components at a set $A$ given by
  \begin{align*}
    \eta_A \colon A & \,\rightarrow\, \D A & \mu_A \colon \D\D A & \,\rightarrow\, \D A \\
    a & \,\mapsto\, 1 \cdot a & \sum_{1 \leqslant i \leqslant n} \!\!\lambda_i
    \cdot{\omega_i} & \,\mapsto\, \big(\,a \mapsto \!\!\sum_{1 \leqslant i \leqslant n} \!\!\lambda_i 
    \omega_i(a)\,\big)\rlap{ .}
  \end{align*}
  (Note that, in giving $\mu_A$, we have to the left a
  \emph{formal} convex combination of elements of $\D A$, and to the
  right, an \emph{actual} convex combination in $[0,1]$.)
\end{Defn}

In~\cite{Jacobs2017Hyper}, the discrete distribution monad is
discussed in terms of its Kleisli category; our interest here is in
the algebras of this monad, which are sometimes known as
\emph{abstract convex spaces}.

\begin{Defn}
  \label{def:5}
  An \emph{abstract convex space} is a set $A$ endowed with an
  operation
  \begin{equation}
    \label{eq:1}
    \begin{aligned}
      (0,1) \times A \times A & \rightarrow A \\
      (r,a,b) & \mapsto r(a,b)
    \end{aligned}
  \end{equation}
  satisfying the following axioms for all $a,b,c \in A$ and $r,s \in (0,1)$:
  \begin{enumerate}[(i)]
  \item $r(a,a) = a$;
  \item $r(a,b) = r^\ast(b,a)$ (recall we write $r^\ast$ for $1-r$);
  \item $r(s(a,b),c) = (rs)(a, \tfrac{r \cdot s^\ast}{(rs)^\ast}(b,c))$.
  \end{enumerate}
  A \emph{map of convex spaces} from $A$ to $B$ is a function
  $f \colon A \rightarrow B$ such that $f(r(a,b)) = r(fa,fb)$ for all
  $a,b \in A$ and $r \in (0,1)$.
\end{Defn}

If we view the operation~\eqref{eq:1} as an ``abstract convex
combination'' $r(a,b) = r \cdot a + r^\ast \cdot b$, then the axioms
are just what is needed to ensure that this behaves as expected. The
two main classes of examples of abstract convex spaces are:
\begin{itemize}
\item Convex subsets of vector spaces (so convex spaces in the usual
  sense) under the usual convex combination operation; and
\item Meet-semilattices under the operation $r(a,b) = a \wedge b$ for all $r \in (0,1)$.
\end{itemize}

 If we extend the operation of an abstract convex space to
one $[0,1] \times A \times A \rightarrow A$ by defining $1(a,b) = a$
and $0(a,b) = b$, then axioms (i)--(iii) are
still validated for the new edge cases in $\{0,1\}$ wherever this
makes sense (i.e., so long as $rs \neq 1$ in (iii)). This yields the
axiomatisation of abstract convex spaces found in~\cite[\sec 2, Axioms
B1--B3]{Neumann1970On-the-quasivariety}. 
In particular, these axioms ensure that each of the valid ways of
interpreting a formal convex combination
\begin{equation}\label{eq:7}
  \sum_{1 \leqslant i \leqslant n} r_i \cdot{a_i} \ \ \in\ \  \D A
\end{equation}
as an element of $A$ via repeated application of the
operation~\eqref{eq:1} will give the same result, so that we have a
well-defined function $\D A \rightarrow A$. This function endows $A$
with $\mnd{D}$-algebra structure, which is the key step in proving:
\begin{Lemma}\cite[Theorem~4]{Jacobs2010Convexity}.
  \label{lem:1}
  The category $\cat{Conv}$ of abstract convex spaces and convex maps
  is isomorphic over $\cat{Set}$ to the category $\cat{Set}^\mnd{D}$
  of $\mnd{D}$-algebras.
\end{Lemma}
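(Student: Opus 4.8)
The plan is to exhibit an isomorphism of categories directly, by constructing mutually inverse functors $\cat{Conv} \to \cat{Set}^{\mnd{D}}$ and $\cat{Set}^{\mnd{D}} \to \cat{Conv}$, each of which is the identity on underlying sets and functions. This automatically makes the isomorphism one over $\cat{Set}$, and reduces the statement to a bijection between convex structures and $\mnd{D}$-algebra structures on a fixed set $A$, compatible with morphisms.

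One direction is a short computation. Given a $\mnd{D}$-algebra $\alpha \colon \D A \to A$, define $r(a,b) \defeq \alpha(r \cdot a + r^\ast \cdot b)$ for $r \in (0,1)$ and $a,b \in A$. Then axiom~(i) follows from the unit law $\alpha \circ \eta_A = \id_A$ together with the equality $r \cdot a + r^\ast \cdot a = 1 \cdot a$ in $\D A$; axiom~(ii) holds automatically, since an element of $\D A$ does not depend on the order in which its formal summands are written; and axiom~(iii) follows from the associativity law $\alpha \circ \mu_A = \alpha \circ \D\alpha$ applied to $r \cdot (s \cdot a + s^\ast \cdot b) + r^\ast \cdot (1 \cdot c) \in \D\D A$, whose image under $\D\alpha$ then $\alpha$ is the left-hand side of~(iii) and whose image under $\mu_A$ then $\alpha$ is the right-hand side, after the coefficient bookkeeping displayed in~(iii).

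The substantial direction is the reverse. Given a convex space $A$, the structure map $\alpha \colon \D A \to A$ is essentially forced: a formal combination $\sum_{i=1}^n r_i \cdot a_i \in \D A$ (all $r_i > 0$, the $a_i$ distinct) must be sent to the element obtained by iterating~\eqref{eq:1}, which we organise as the recursion $\alpha(1 \cdot a) = a$ and, for $n \geq 2$,
\begin{equation*}
  \alpha\Big({\textstyle\sum_{i=1}^n r_i \cdot a_i}\Big) \;=\; r_1\Big(a_1,\ \alpha\big({\textstyle\tfrac{1}{r_1^\ast}\sum_{i=2}^n r_i \cdot a_i}\big)\Big)
\end{equation*}
which makes sense because $n \geq 2$ forces $0 < r_1 < 1$ and $\tfrac{1}{r_1^\ast}\sum_{i=2}^n r_i = 1$. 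The real work, and the step I expect to be the main obstacle, is showing this is independent of the chosen ordering of the summands, so that $\alpha$ is well defined; this is precisely the coherence assertion sketched around~\eqref{eq:7}. I would prove it by induction on $n$, reducing to the claim that transposing two \emph{adjacent} summands leaves the value unchanged, since adjacent transpositions generate all permutations. A transposition in positions $\geq 2$ is handled by the inductive hypothesis applied to the $(n-1)$-element tail. A transposition of the first two summands reduces, using~(ii) to rewrite $r_1(a_1,-)$ as $r_1^\ast(-,a_1)$ and, when $n \geq 3$, one further step of the recursion followed by~(iii), to an expression manifestly symmetric in $(r_1,a_1)$ and $(r_2,a_2)$ — all coefficients matching by direct calculation; the case $n = 2$ is just axiom~(ii).

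With well-definedness in hand, it remains to check that $\alpha$ is a $\mnd{D}$-algebra, that the two object assignments are mutually inverse, and that they match morphisms. The unit law is immediate from $\alpha(1 \cdot a) = a$; the associativity law $\alpha \circ \mu_A = \alpha \circ \D\alpha$ says that evaluating a two-level formal combination agrees with evaluating its flattening, which, given the order-independence just established, is a finite rebracketing problem settled by repeated use of~(iii) (to reassociate) and~(i) (to merge leaves that coincide after flattening). The two constructions are mutually inverse because each convex structure and each algebra structure is determined by its restriction to two-element combinations — for algebras because $\mu_A$ builds every distribution from binary ones (and the associativity law then identifies $\alpha$ with the map produced by the recursion), for convex spaces by the defining recursion — and both round-trips are the identity on such combinations. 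Finally, a function $f \colon A \to B$ satisfies $f(r(a,b)) = r(fa,fb)$ for all $r \in (0,1)$ if and only if it commutes with $\alpha_A$ and $\alpha_B$: ``only if'' follows by induction along the recursion defining $\alpha$, and ``if'' is the $n = 2$ instance. Hence the two functors are inverse isomorphisms over $\cat{Set}$. Apart from the order-independence of $\alpha$, every step is routine manipulation of the three axioms.
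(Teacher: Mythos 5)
Your proof is correct and takes essentially the same route as the paper, which states the result with no more justification than the observation that the convex-space axioms make the evaluation map $\D A \to A$ well defined --- exactly the coherence step you supply via the adjacent-transposition induction, the rest being the same routine verifications. One minor repair: in deriving axiom (iii) from an algebra $\alpha$, applying the associativity law to $r\cdot(s\cdot a + s^\ast\cdot b) + r^\ast\cdot(1\cdot c)$ only identifies the left-hand side with $\alpha(rs\cdot a + r s^\ast\cdot b + r^\ast\cdot c)$; to match this with the nested right-hand side you need one further application of the same law, to the element $rs\cdot(1\cdot a) + (rs)^\ast\cdot\big(\tfrac{r s^\ast}{(rs)^\ast}\cdot b + \tfrac{r^\ast}{(rs)^\ast}\cdot c\big)$ of $\D\D A$.
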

This result justifies us in using expressions of the form~\eqref{eq:7}
to denote an element of an abstract convex space $A$, and we do so
without further comment.

The relation betwen abstract convex spaces and hypernormalisation lies
in the construction of finite coproducts in $\cat{Conv}$. While
coproducts in algebraic categories are usually messy and syntactic,
for abstract convex spaces they are quite
intuitive. 
Given $A,B \in \cat{Conv}$, their coproduct must
certainly contain copies of $A$ and $B$; and must also contain a
formal convex combination $r \cdot a + r^\ast \cdot b$ for each
$a \in A$, $b \in B$ and $r \in (0,1)$. For a general algebraic
theory, this process of iteratively adjoining formal interpretations
for operations would continue, but in this case, it stops here:
\begin{Lemma}
  \label{lem:2}(cf.~\cite[\sec 2]{Jacobs2015States}).
  If $A$ and $B$ are abstract convex spaces, then their coproduct $A
  \star B$ in
  $\cat{Conv}$ is
  the set $A + \big((0,1) \times A \times B\big) + B$, endowed with the
  convex combination operator whose most involved case is
  \begin{equation*}
    r_{A \star B}((s,a,b),(t,a',b')) =
    (rs+rt^\ast,(\tfrac{rs}{rs+r^\ast t})_A(a,a'),
    (\tfrac{rs^\ast}{(rs + r^\ast t)^\ast})_B(b,b'))\rlap{ .}
  \end{equation*}
\end{Lemma}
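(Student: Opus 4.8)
The plan is to verify directly that the set $A \star B = A + \big((0,1) \times A \times B\big) + B$, equipped with the stated convex-combination operator, satisfies the three axioms of Definition~\ref{def:5}, and then to exhibit coproduct injections and verify the universal property. First I would pin down the full operator: besides the ``most involved case'' displayed in the statement, one must spell out $r_{A \star B}$ on the remaining pairs of summands. When both arguments lie in $A$, we set $r_{A\star B}(a,a') = r_A(a,a')$; likewise on $B$. When one argument is $a \in A$ and the other is $b \in B$, we get the new formal combination $(r, a, b) \in (0,1)\times A\times B$; and when one argument is $a \in A$ and the other is a formal combination $(t,a',b')$, we obtain $(rt + r^\ast, (\tfrac{rt}{rt+r^\ast})_A(a,a'), b')$ — obtained from the displayed formula by letting $s \to 1$ — and symmetrically on the $B$-side. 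This bookkeeping is tedious but forced: each case is dictated by the requirement that the injections $A \hookrightarrow A\star B \hookleftarrow B$ be convex maps and that $(r,a,b)$ literally be $r\cdot\iota_A(a) + r^\ast\cdot\iota_B(b)$.

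Next I would check axioms (i) and (ii), which are essentially immediate: idempotence $r_{A\star B}(x,x) = x$ reduces to idempotence in $A$ or $B$ on the diagonal summands (on $(0,1)\times A\times B$ the diagonal is empty, so there is nothing to check beyond noting $(s,a,b) = (s,a,b)$), and the symmetry law $r_{A\star B}(x,y) = r^\ast_{A\star B}(y,x)$ follows case-by-case from symmetry in $A$, in $B$, and the arithmetic identity $(rs + rt^\ast)^\ast = r^\ast s' + r^\ast (t')^\ast$ with the coordinate fractions transforming correctly under $r \leftrightarrow r^\ast$, $s \leftrightarrow t'$, $t \leftrightarrow s'$.

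The main obstacle is axiom (iii), the associativity-type identity $r(s(x,y),z) = (rs)(x, \tfrac{r s^\ast}{(rs)^\ast}(y,z))$, which must be verified over all combinations of $x,y,z$ landing in the three summands — a priori $3^3 = 27$ cases, though symmetry and the reductions to $A$ and $B$ collapse most of them. The genuinely new content is the handful of cases involving at least one formal combination from $(0,1)\times A\times B$, where one must show that the first coordinate (a point of $(0,1)$) and the two ``mixing ratios'' (points of $(0,1)$ feeding into $r_A$ and $r_B$) computed along the two sides of the equation agree. This is a direct but delicate computation with the rational functions appearing in the displayed formula; the key is that these are exactly the barycentric-coordinate change-of-variables formulas for the $2$-simplex, so the identities hold by the coherence of iterated convex combinations — equivalently, one can shortcut the whole verification by observing that $A\star B$ is the quotient of the free convex space $\D(A \sqcup B)$ (or rather the evident sub-convex-space generated by $A$ and $B$) and that every element has a unique normal form in one of the three summands, whence the operations are well-defined and the axioms are inherited. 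Finally, for the universal property: given convex maps $f\colon A \to C$, $g\colon B \to C$, I would define $[f,g]\colon A\star B \to C$ by $a \mapsto fa$, $b \mapsto gb$, $(r,a,b)\mapsto r_C(fa,gb)$, check it is a convex map (again case analysis, using axioms (i)--(iii) in $C$), and check uniqueness — forced because any convex map out of $A\star B$ restricting to $f$ and $g$ must send $(r,a,b) = r\cdot\iota_A a + r^\ast\cdot\iota_B b$ to $r_C(fa,gb)$. I expect the coordinate-juggling in axiom (iii) to absorb essentially all the real work.
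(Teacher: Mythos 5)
Your outline matches the paper's own treatment of this lemma: the operator is obtained by expanding the formal combination $r \cdot(s \cdot a + s^\ast \cdot b) + r^\ast \cdot (t \cdot{a'} + t^\ast \cdot {b'})$, the remaining cases and the axiom checks are routine case analysis, and the universal property is verified by exactly the copairing you describe. So the architecture is right, but your bookkeeping contains concrete errors. Expanding $r\cdot a + r^\ast\cdot(t\cdot a' + t^\ast\cdot b')$ puts weight $r + r^\ast t$ on the $A$-part, so the mixed case is
$\big(r+r^\ast t,\ (\tfrac{r}{r+r^\ast t})_A(a,a'),\ b'\big)$,
not $\big(rt+r^\ast,\ (\tfrac{rt}{rt+r^\ast})_A(a,a'),\ b'\big)$: your ``$s\to 1$'' specialisation was mis-executed (as written it is neither the $s\to 1$ nor the $t\to 1$ degeneration, and its inner combination has the arguments in the wrong order). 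Relatedly, consistency with the denominators $rs+r^\ast t$ and $(rs+r^\ast t)^\ast$ in the displayed formula forces its first coordinate to be $rs+r^\ast t$; the printed $rs+rt^\ast$ is a typo, which is worth correcting rather than propagating. Also, axiom (i) is not vacuous on the middle summand: one must check $r_{A\star B}((s,a,b),(s,a,b))=(s,a,b)$, which uses $rs+r^\ast s=s$ together with idempotence in $A$ and $B$, rather than ``nothing to check''.

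On axiom (iii), beware of circularity: ``coherence of iterated convex combinations'' is precisely what axioms (i)--(iii) encode, so it cannot be invoked for $A\star B$, the object under construction. What actually closes the computation is that the $(0,1)$-coordinates appearing on the two sides are genuine convex combinations in $[0,1]$, so their equality is elementary arithmetic of the kind in~\eqref{eq:22}, used together with the axioms holding in $A$ and in $B$. Your proposed shortcut also needs repair: $A\star B$ is a \emph{quotient} of the free convex space $\D(A+B)$ (collapsing formal combinations of elements of $A$ among themselves, and of $B$ among themselves), not a subspace --- the sub-convex-space of $\D(A+B)$ generated by $A+B$ is all of $\D(A+B)$ --- and proving the ``unique normal form'' claim amounts to the same well-definedness computation, so it reframes rather than avoids the work. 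The universal-property part of your proposal (the copairing sends $(r,a,b)\mapsto r_C(fa,gb)$, and uniqueness is forced) is correct and coincides with the paper's argument.
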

This formula was obtained by expanding out the formal convex
combination
$r \cdot(s \cdot a + s^\ast \cdot b) + r^\ast \cdot (t \cdot{a'} +
t^\ast \cdot {b'})$, rearranging, and partially evaluating the terms
from $A$ and from $B$. The reader should have no difficulty giving the
remaining, simpler, cases (where one or both arguments of
$r_{A \star B}$ come from $A$ or $B$), and in then proving that the
resulting object is an abstract convex space.

If we write elements $a \in A$,
$(r,a,b) \in (0,1) \times A \times B$ and $b \in B$ in the three summands of $A \star
B$ as, respectively,
\begin{equation*}
  \iota_1(a) \ \text{,} \qquad r \cdot a + r^\ast \cdot b \qquad
  \text{and} \qquad 
  \iota_2(b)\rlap{ ,}
\end{equation*}
then the two coproduct injections are given by
$\iota_1 \colon A \rightarrow A \star B \leftarrow B \colon \iota_2$;
and as for the universal property of coproduct, if
$f \colon A \rightarrow C$ and $g \colon A \rightarrow C$ are convex
maps, then the unique induced map
$\spn{f,g} \colon A \star B \rightarrow C$ sends $\iota_1(a)$ or
$\iota_2(b)$ to $f(a)$ or $g(b)$ respectively, and sends
$r \cdot a + r^\ast \cdot b = r_{A \star B}(a,b)$ to
$r \cdot {f(a)} + r^\ast \cdot{g(b)} = r_C(fa,gb)$.

To draw the link with hypernormalisation, consider the
free-forgetful adjunction
\begin{equation}\label{eq:14}
  \cd{
    {\cat{Conv}} \ar@<-4.5pt>[r]_-{U^\mnd{D}} \ar@{<-}@<4.5pt>[r]^-{F^\mnd{D}} \ar@{}[r]|-{\bot} &
    {\cat{Set}}
  }
\end{equation}
associated to the monad $\mnd{D}$. The left adjoint $F^\mnd{D}$ sends
the set $A$ to the set $\D A$, seen as an abstract convex space under
the convex combination operation given pointwise by the usual one on
$[0,1]$. Being a left adjoint, $F^\mnd{D}$ preserves coproducts, and
so we have for any set $A$ and any $n \in \mathbb{N}$ a bijection of
abstract convex spaces
\begin{equation}\label{eq:6}
  \varphi \colon \D(A + B) \rightarrow {\D A \star \D B}\rlap{ ,}
\end{equation}
which, if we spell it out, we see is really just hypernormalisation:
\begin{Prop}
  \label{prop:1}
  The isomorphism~\eqref{eq:6} is given by
  \begin{equation*}
    \varphi(\omega) \ \ = \ \
    \begin{cases}
       \iota_1(\omega_1) & \text{ if $\omega_1(A) = 1$;}\\
      \iota_2(\omega_2) & \text{ if $\omega_2(B) = 1$;}\\
      \omega_1(A) \cdot{\overline{\omega_1}} + \omega_2(B) \cdot{\overline{\omega_2}} & \text{ otherwise,}
    \end{cases}
  \end{equation*}
  where $\omega_1$ and $\omega_2$ are the
  sub-probability distributions obtained by restricting $\omega$ to $A$ and $B$.
\end{Prop}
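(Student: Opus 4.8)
The plan is to unwind the definition of the bijection $\varphi$ in~\eqref{eq:6} and match it against the three cases in Proposition~\ref{prop:1}. Recall that $\varphi$ is obtained from the fact that $F^\mnd{D}$ preserves coproducts, so it is characterised as the unique map of convex spaces making the two coproduct injections match up: concretely, $\varphi$ is the composite
\begin{equation*}
  \D(A+B) \xrightarrow{\ \spn{\D\iota_1,\,\D\iota_2}\ } \D A \star \D B
\end{equation*}
where on the right we use the coproduct structure described after Lemma~\ref{lem:2}. Equivalently, $\varphi = \spn{F^\mnd{D}\iota_1, F^\mnd{D}\iota_2}$, the copairing in $\cat{Conv}$ of the two maps $\D\iota_k \colon \D A \to \D(A+B)$... no: more precisely, $\varphi$ is the mate of the identity, so it is the unique \emph{convex} map sending $\D\iota_1(\alpha) = \iota_1{}_!(\alpha)$ to $\iota_1(\alpha)$ and $\D\iota_2(\beta)$ to $\iota_2(\beta)$, for $\alpha \in \D A$, $\beta \in \D B$.

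First I would fix $\omega \in \D(A+B)$ and write $r = \omega_1(A) = \omega\iota_1$ evaluated on all of $A$, so that $\omega_2(B) = r^\ast$, since $\omega$ is a probability distribution on $A+B$. In the degenerate cases $r = 1$ and $r = 0$ the distribution $\omega$ lies in the image of $\D\iota_1$ or $\D\iota_2$ respectively: if $r=1$ then $\omega = \iota_1{}_!(\omega_1)$ with $\omega_1 \in \D A$ a genuine probability distribution, so $\varphi(\omega) = \iota_1(\omega_1)$, giving the first case; symmetrically for $r=0$ we get $\varphi(\omega) = \iota_2(\omega_2)$, the second case. For the generic case $0 < r < 1$, I would write $\omega$ as the formal convex combination $r \cdot (\iota_1{}_!\overline{\omega_1}) + r^\ast \cdot (\iota_2{}_!\overline{\omega_2})$ inside the convex space $\D(A+B)$ — this is exactly the statement that $\omega$ splits as $r$ times the normalised first part plus $r^\ast$ times the normalised second part, which holds pointwise in $[0,1]$ by Definition~\ref{def:2}. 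Since $\varphi$ is a convex map sending $\iota_1{}_!\overline{\omega_1} \mapsto \iota_1(\overline{\omega_1})$ and $\iota_2{}_!\overline{\omega_2} \mapsto \iota_2(\overline{\omega_2})$, it sends this combination to $r \cdot \iota_1(\overline{\omega_1}) + r^\ast \cdot \iota_2(\overline{\omega_2}) = r_{\D A \star \D B}(\overline{\omega_1},\overline{\omega_2})$, which is precisely $\omega_1(A)\cdot\overline{\omega_1} + \omega_2(B)\cdot\overline{\omega_2}$ in the notation introduced after Lemma~\ref{lem:2}. This gives the third case.

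The only genuinely substantive point — and the step I expect to be the main obstacle — is identifying the abstract map $\varphi$ coming from coproduct-preservation with the concrete formula $\spn{\D\iota_1, \D\iota_2}$, i.e.\ verifying that $\varphi^{-1}$ really is the copairing of $\D\iota_1$ and $\D\iota_2$ and hence that $\varphi$ has the injection-tracking property used above. This is a formal consequence of how the canonical comparison $F^\mnd{D}(A) \star F^\mnd{D}(B) \to F^\mnd{D}(A+B)$ is built from the coproduct injections of $\cat{Set}$, together with the triangle identities for the adjunction~\eqref{eq:14}; once that is in hand, everything else is the routine bookkeeping sketched above, and there is no further calculation beyond the pointwise identity $\omega = r\cdot(\iota_1{}_!\overline{\omega_1}) + r^\ast\cdot(\iota_2{}_!\overline{\omega_2})$ in $[0,1]$.
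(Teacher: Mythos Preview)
Your proposal is correct and follows essentially the same route as the paper: identify the key property that $\varphi$ sends $(\iota_k)_!(\alpha)$ to $\iota_k(\alpha)$, then split into the three cases, using in the generic case the decomposition $\omega = r\cdot(\iota_1)_!\overline{\omega_1} + r^\ast\cdot(\iota_2)_!\overline{\omega_2}$ together with convexity of $\varphi$. The only difference is in how the key property is established: the paper characterises $\varphi$ as the adjoint transpose of $A+B \xrightarrow{\eta+\eta} \D A + \D B \hookrightarrow \D A \star \D B$ and reads the property off from that, whereas you characterise $\varphi^{-1}$ as the copairing $\spn{\D\iota_1,\D\iota_2}$ --- these are two sides of the same formal fact and neither involves more work than the other.
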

\begin{proof}
  $\varphi$ is the extension of the composite function
  \begin{equation}\label{eq:8}
    A + B \xrightarrow{\eta + \eta} \D A + \D B \hookrightarrow \D A
    \star \D B
  \end{equation}
  to a convex map
  $\D(A+B) \rightarrow \D A \star \D B$. Precomposing~\eqref{eq:8}
  with $\iota_1 \colon A \rightarrow A+B$ yields
  \begin{equation*}
    A \xrightarrow{\eta} \D A \xrightarrow{\iota_1} \D A \star \D B
  \end{equation*}
  whence $\varphi$ identifies $\D A \hookrightarrow \D(A+B)$ with
  the left coproduct summand of $\D A \star \D B$. This proves the
  first case of the desired formula; the second is similar.

  Finally, consider $\omega \in \D(A+B)$ which does not factor through
  either $\D A$ or $\D B$. Since both sub-probability distributions
  $\omega_1$ and $\omega_2$ are non-zero, we can form both
  $\overline{\omega_1} \in \D A$ and $\overline{\omega_2} \in \D B$,
  and in these terms we now have
  \begin{equation*}
    \omega = \omega_1(A) \cdot{(\iota_1)_\ast(\overline{\omega_1})} +
    \omega_2(B) \cdot{(\iota_2)_\ast(\overline{\omega_1})}
  \end{equation*}
  in $\D(A+B)$. As~$\varphi$ is a convex map, it follows that
  $\varphi(\omega) = \omega_1(A) \cdot \overline{\omega_1} +
  \omega_2(B) \cdot \overline{\omega_2}$ from the two cases already proved.
\end{proof}

\section{Abstract hypernormalisation}
\label{sec:opmonoidal-comonads}
The map~\eqref{eq:6} of Proposition~\ref{prop:1} is related to the
hypernormalisation map~\eqref{eq:2}, but is not quite the
same. 
In this section, we explain how to derive the latter map from the
former one, and isolate the general structure required for this
derivation: that of a \emph{linear exponential monad}. Guided by this,
we define an abstract notion of hypernormalisation with respect to a
linear exponential monad, show that it has the desired equational
properties, and explain how it may give rise to a version of the
channel-to-abstract channel construction from the introduction.

\subsection{Recapturing hypernormalisation}
\label{sec:hypernormalisation-2}
Towards bridging the gap between~\eqref{eq:2} and~\eqref{eq:6}, we observe that~\eqref{eq:2} is \emph{not} a map of
abstract convex spaces, so that to recapture it, we must necessarily leave the
category $\cat{Conv}$. We do so in an apparently simple-minded
fashion, by considering the category $\cat{Conv}_{\mathrm{arb}}$ whose
objects are abstract convex spaces and whose maps are \emph{arbitrary}
functions.


Now, the binary coproduct $\star$ on $\cat{Conv}$ is part of a
symmetric monoidal structure, whose unit is the empty convex space,
and whose coherence isomorphisms are induced from the universal
properties of finite coproducts. This symmetric monoidal structure
\emph{extends} to $\cat{Conv}_{\mathrm{arb}}$; by this we mean simply
that $\cat{Conv}_{\mathrm{arb}}$ has a symmetric monoidal structure
with respect to which the inclusion
$\cat{Conv} \hookrightarrow \cat{Conv}_{\mathrm{arb}}$ becomes
symmetric strict monoidal. This monoidal structure is (necessarily)
given on objects as before, while on maps $f \colon A \rightarrow A'$
and $g \colon B \rightarrow B'$ of $\cat{Conv}_{\mathrm{arb}}$, the
tensor $f \star g \colon A \star B \rightarrow A' \star B'$ is given
by
\begin{equation}\label{eq:12}
  f \star g = f + \big((0,1) \times f \times g\big) + g\rlap{ ,}
\end{equation}
i.e., exactly the same formula as the definition of $\star$ on maps in
$\cat{Conv}$.

Now suppose we are given abstract convex spaces $A$ and $B$. Using the extended
monoidal structure on $\cat{Conv}_{\mathrm{arb}}$, we obtain a function
\begin{equation}\label{eq:13}
  A \star B \xrightarrow{\eta_A \star \eta_B} \D A \star \D B
  \xrightarrow{\varphi^{-1}} \D(A+B)
\end{equation}
whose second part is the inverse of~\eqref{eq:6} and whose first part
is the tensor~\eqref{eq:12} of the (non-convex) functions $\eta_A$ and
$\eta_B$. Working through the definitions, we see that this sends
elements $\iota_1(a)$ and $\iota_2(b)$ of $A \star B$ to the
distributions $1 \cdot a$ and $1 \cdot b$ on $A + B$ concentrated at a
single point; while an element $r \cdot a + r^\ast \cdot b \in A \star B$ is sent to the
two-point distribution $r \cdot a + r^\ast \cdot b$ on $A + B$. Combining this
description of~\eqref{eq:13} with Proposition~\ref{prop:1}, we
immediately obtain:
\begin{Prop}
  \label{prop:3}
  The hypernormalisation map~\eqref{eq:2} is the composite
  \begin{equation}\label{eq:15}
    \D( A +  B) \xrightarrow{\ \varphi\ } \D A \star \D B
    \xrightarrow{\eta_{\D A} \star \eta_{\D B}} \D \D A \star \D \D B
    \xrightarrow{\varphi^{-1}} \D(\D A + \D B)\rlap{ .}
  \end{equation}
\end{Prop}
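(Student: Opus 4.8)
The plan is to unwind the composite~\eqref{eq:15} case-by-case on elements of $\D(A+B)$, using the explicit description of $\varphi$ from Proposition~\ref{prop:1} and the explicit description of the map~\eqref{eq:13} obtained just above. First I would fix $\omega \in \D(A+B)$ and set $\omega_1 = \omega\iota_1$, $\omega_2 = \omega\iota_2$ as usual; by Proposition~\ref{prop:1}, the first arrow $\varphi$ sends $\omega$ to $\iota_1(\omega_1)$, to $\iota_2(\omega_2)$, or to $\omega_1(A) \cdot \overline{\omega_1} + \omega_2(B) \cdot \overline{\omega_2}$, according to which of $\omega_1$, $\omega_2$ is a probability distribution (if either). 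In each of these three cases one computes the image under $\eta_{\D A} \star \eta_{\D B}$ (which, following~\eqref{eq:12}, is $\eta_{\D A}$ on the left summand, $\eta_{\D B}$ on the right summand, and $(0,1) \times \eta_{\D A} \times \eta_{\D B}$ on the middle summand), and then applies $\varphi^{-1}$ via the element description of~\eqref{eq:13}.

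Concretely: in the first case $\varphi(\omega) = \iota_1(\omega_1)$ maps to $\iota_1(\eta_{\D A}(\omega_1)) = \iota_1(1 \cdot \omega_1) \in \D\D A \star \D\D B$, and then $\varphi^{-1}$ sends this to the point distribution $1 \cdot \iota_1(\omega_1) \in \D(\D A + \D B)$; comparing with~\eqref{eq:3} (with $\omega_1$ itself a probability distribution, so $\overline{\omega_1} = \omega_1$), this is exactly $\N(\omega)$. The second case is symmetric and matches~\eqref{eq:4}. In the remaining case, $\eta_{\D A} \star \eta_{\D B}$ sends $\omega_1(A) \cdot \overline{\omega_1} + \omega_2(B) \cdot \overline{\omega_2}$ to the element $\omega_1(A) \cdot \eta_{\D A}(\overline{\omega_1}) + \omega_2(B) \cdot \eta_{\D B}(\overline{\omega_2})$ of the middle summand $(0,1) \times \D\D A \times \D\D B$, i.e.\ to $\omega_1(A) \cdot (1\cdot\overline{\omega_1}) + \omega_2(B) \cdot (1\cdot\overline{\omega_2})$; applying $\varphi^{-1}$ then produces the genuine two-point distribution $\omega_1(A) \cdot \iota_1(\overline{\omega_1}) + \omega_2(B) \cdot \iota_2(\overline{\omega_2})$ on $\D A + \D B$, which with $r = \omega_1(A)$, $r^\ast = \omega_2(B)$ is precisely~\eqref{eq:5}. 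Since~\eqref{eq:3}--\eqref{eq:5} exhaust the behaviour of $\N$ on $\D(A+B)$ in the binary case, this shows~\eqref{eq:15} agrees with $\N$ on every element, hence as functions.

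The only mild subtlety — the ``main obstacle'', such as it is — is purely bookkeeping: one must make sure the trichotomy used for $\varphi$ in Proposition~\ref{prop:1} ($\omega_1$ a distribution / $\omega_2$ a distribution / neither) lines up correctly with the trichotomy~\eqref{eq:3}--\eqref{eq:5} used to define $\N$ ($\omega_1$ a distribution / $\omega_1$ zero / $0 < \omega_1(A) < 1$), and that in the ``neither'' case the weights and normalisations are read off consistently on both sides. Once the three cases are matched, no computation beyond substituting into the two explicit element-level formulas is needed, so I expect the argument to be short.
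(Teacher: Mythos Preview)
Your proposal is correct and follows essentially the same approach as the paper: the paper observes that the last two arrows of~\eqref{eq:15} constitute precisely the map~\eqref{eq:13} instantiated at $X = \D A$, $Y = \D B$, and then says that combining the element-level description of~\eqref{eq:13} with Proposition~\ref{prop:1} ``immediately'' gives the result. Your three-case unwinding is exactly this combination spelled out in full, and your handling of the bookkeeping (matching the two trichotomies) is fine.
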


Thus, the hypernormalisation map~\eqref{eq:2} arises inevitably from
the isomorphism~\eqref{eq:6} together with the fact that the coproduct
monoidal structure on $\cat{Conv}$ extends to
$\cat{Conv}_\mathrm{arb}$. We now give an explanation of why this extension of monoidal
structure should exist.
To motivate this explanation, observe that the formula~\eqref{eq:12}
for the extended tensor product on $\cat{Conv}_\mathrm{arb}$ works
because the underlying set of $A \star B$ depends only on the
underlying sets of $A$ and $B$, and not on their convex
structure. So could the symmetric monoidal structure $\star$ on
$\cat{Conv}$ be a \emph{lifting} of a symmetric monoidal structure on
$\cat{Set}$? In other words, is there a symmetric monoidal structure
$(\star,0)$ on $\cat{Set}$---which as in the introduction we term the
\emph{convex monoidal structure}---such that
$U^\mnd{D} \colon (\cat{Conv}, \star, 0) \rightarrow (\cat{Set},
\star, 0)$ is strict symmetric monoidal?

In Section~\ref{sec:conv-mono-struct} below, we will see that this is
indeed the case; for the moment, let us see how, assuming this fact,
we can recover the symmetric monoidal structure of
$\cat{Conv}_\mathrm{arb}$. To do this, we consider the evident
factorisation
$\cat{Conv} \rightarrow \cat{Conv}_\mathrm{arb} \rightarrow \cat{Set}$
of $U^\mnd{D}$ through $\cat{Conv}_\mathrm{arb}$, and apply the
following result:
\begin{Lemma}
  \label{lem:5}\cite{Power1989A-general}
  Let $F \colon \E \rightarrow \C$ be a strict symmetric monoidal
  functor between symmetric monoidal categories, and let
  \begin{equation*}
    F = \E \xrightarrow{G} \D \xrightarrow{H} \C
  \end{equation*}
  be a factorisation of the underlying functor $F$ wherein $G$ is
  bijective on objects and $H$ is fully faithful. There is a unique
  symmetric monoidal structure on $\D$ making both $G$ and $H$ strict
  symmetric monoidal.
\end{Lemma}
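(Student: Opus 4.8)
The plan is to transport the symmetric monoidal structure of $\E$ along $G$ at the level of objects and along $H$ at the level of morphisms and coherence data, exploiting the two halves of the hypothesis: $G$ is bijective on objects, and $H$ is fully faithful. Since $G$ is bijective on objects, every object of $\D$ is $GE$ for a unique $E \in \ob\E$, and we set $GE \otimes GE' \defeq G(E \otimes E')$ and $I_\D \defeq GI_\E$. With these definitions $G$ strictly preserves $\otimes$ and the unit on objects by fiat; and applying $H$ and using that $HG = F$ is strict symmetric monoidal gives $H(D \otimes D') = HD \otimes HD'$ and $HI_\D = I_\C$, so $H$ too strictly preserves $\otimes$ and the unit on objects. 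This last computation is \emph{the} point requiring care: it is what guarantees that the morphism- and coherence-level definitions below land in the objects one expects.

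Next, for morphisms $f \colon D_1 \to D_1'$ and $g \colon D_2 \to D_2'$ in $\D$, the identity $H(D_1 \otimes D_2) = HD_1 \otimes HD_2$ just established makes $Hf \otimes Hg$ a morphism $H(D_1 \otimes D_2) \to H(D_1' \otimes D_2')$; since $H$ is full and faithful there is a unique $f \otimes g$ in $\D$ with $H(f \otimes g) = Hf \otimes Hg$. Faithfulness of $H$ then turns the functor axioms for $\otimes$ on $\D$ -- preservation of identities and of composites -- into the corresponding true statements in $\C$, so $\otimes \colon \D \times \D \to \D$ is a functor; it makes $H$ strict monoidal on morphisms by construction, and it makes $G$ strict monoidal on morphisms because $HG(f \otimes g) = F(f \otimes g) = Ff \otimes Fg = HGf \otimes HGg = H(Gf \otimes Gg)$ and $H$ is faithful. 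The associator, unitors and symmetry of $\D$ are defined the same way: e.g.\ $a_{HD_1, HD_2, HD_3}$ is, by the object-level identities, a morphism $H((D_1 \otimes D_2) \otimes D_3) \to H(D_1 \otimes (D_2 \otimes D_3))$ in $\C$, so there is a unique $a_{D_1, D_2, D_3}$ in $\D$ mapping to it under $H$, and it is an isomorphism because fully faithful functors reflect isomorphisms. Naturality of all these families, the pentagon, triangle and hexagon identities and $c \circ c = \id$ then follow from faithfulness of $H$ together with the corresponding facts in $\C$; and $G$, $H$ are strict \emph{symmetric} monoidal -- they carry associators to associators, and so on -- immediately for $H$ by construction, and for $G$ by one more appeal to faithfulness of $H$ and strictness of $F$.

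For uniqueness, suppose $(\otimes', I', a', \dots)$ is another symmetric monoidal structure on $\D$ for which $G$ and $H$ are strict symmetric monoidal. Strictness of $G$ forces $GE \otimes' GE' = G(E \otimes E')$ and $I' = GI_\E$, so $\otimes'$ and $I'$ agree with our choices on objects; strictness of $H$ forces $H(f \otimes' g) = Hf \otimes Hg = H(f \otimes g)$, whence $f \otimes' g = f \otimes g$ since $H$ is faithful; and likewise strictness of $H$ pins down $a'$, the unitors and the symmetry via faithfulness of $H$. Hence the two structures coincide. As indicated above, there is no substantial obstacle in the argument: once the compatibility $H(D \otimes D') = HD \otimes HD'$ is in hand, every remaining verification is a diagram chase rendered routine by the faithfulness (for equations) and fullness (for existence of the coherence maps) of $H$.
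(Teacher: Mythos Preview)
Your proof is correct and follows exactly the same approach as the paper: transport the object-level data from $\E$ via the bijective-on-objects $G$, and the morphism-level data and coherence isomorphisms from $\C$ via the fully faithful $H$. The paper compresses this into a single sentence (``Define the unit and the tensor on objects in $\D$ to be those of $\E$, and define the tensor on maps and the coherence morphisms to be those of $\C$''), whereas you have spelled out the verifications and the uniqueness argument in full; but there is no difference in strategy.
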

\begin{proof}
  Define the unit and the tensor on objects in $\D$ to be those of
  $\E$, and define the tensor on maps and the coherence morphisms to
  be those of $\C$.
\end{proof}

At this point, if we still take for granted the existence of the
convex monoidal structure $(\star, 0)$ on $\cat{Set}$, then one final
category-theoretic transformation will allow us to derive the
hypernormalisation maps~\eqref{eq:2} purely in terms of the structure
of the discrete distribution monad. We begin by recalling:

\begin{Defn}
  \label{def:6}
  A monad $\mnd T$ on a symmetric monoidal category $(\C, \otimes, I)$
  is \emph{symmetric opmonoidal} if it comes endowed with a map
  $\upsilon_I \colon TI \rightarrow I$ and maps
  $\upsilon_{XY} \colon T(X \otimes Y) \rightarrow TX \otimes TY$
  for $X,Y \in \C$, subject to seven coherence
  axioms; see, for example,~\cite[Section~7]{Moerdijk2002Monads}.
\end{Defn}
The relevance of this definition for us is captured by:

\begin{Lemma}
  \label{lem:3} \cite[Theorem~7.1]{Moerdijk2002Monads}.
  For any monad $\mnd T$ on a symmetric monoidal category
  $(\C, \otimes, I)$, symmetric opmonoidal monad structures on
  $\mnd{T}$ correspond bijectively to liftings of the symmetric monoidal structure
  of $\C$ to $\C^\mnd{T}$.
\end{Lemma}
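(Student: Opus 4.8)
The plan is to prove Lemma~\ref{lem:3} by exhibiting mutually inverse constructions between symmetric opmonoidal structures on $\mnd T$ and liftings of the monoidal structure of $\C$ to $\C^\mnd T$, and then checking that the correspondence is bijective. Recall that a lifting of $(\otimes, I)$ to $\C^\mnd T$ is a monoidal structure $(\bar\otimes, \bar I)$ on $\C^\mnd T$ for which the forgetful functor $U^\mnd T \colon \C^\mnd T \to \C$ is strict symmetric monoidal; since $U^\mnd T$ is faithful and reflects identities, such a lifting is uniquely determined once one knows, for each pair of $\mnd T$-algebras $(X,a)$, $(Y,b)$, which $\mnd T$-algebra structure $X \otimes Y$ carries, and that $\bar I$ is $I$ with some algebra structure.

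\emph{From opmonoidal structure to lifting.} Given $\upsilon_I \colon TI \to I$ and $\upsilon_{XY} \colon T(X \otimes Y) \to TX \otimes TY$ satisfying the seven Moerdijk--Reyes axioms, I would first give $I$ the $\mnd T$-algebra structure $\upsilon_I \colon TI \to I$; the opmonoidal unit axioms for $\upsilon_I$ (compatibility with $\eta$ and $\mu$) are exactly the two algebra axioms. Next, for algebras $(X,a)$ and $(Y,b)$, I would equip $X \otimes Y$ with the composite $T(X \otimes Y) \xrightarrow{\upsilon_{XY}} TX \otimes TY \xrightarrow{a \otimes b} X \otimes Y$; the remaining opmonoidal axioms — naturality of $\upsilon$, its compatibility with $\eta$ and $\mu$, and the associativity/unitality hexagons and triangles relating the $\upsilon_{XY}$ to the coherence isomorphisms of $\C$ — translate term-by-term into: (a) the algebra axioms for $X \otimes Y$, (b) the statement that $a \otimes b$ is a map of algebras when source is given the ``free-on-tensor'' structure, hence that the coherence isomorphisms $\alpha, \lambda, \rho, \sigma$ of $\C$ lift to $\C^\mnd T$, and (c) that $\bar\otimes$ is functorial on algebra maps. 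This yields a monoidal (indeed symmetric monoidal, using the axiom involving the symmetry) structure on $\C^\mnd T$ over $\C$.

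\emph{From lifting to opmonoidal structure.} Conversely, given a lifting, apply it to free algebras: for sets of generators $X, Y$ one has $\mnd T$-algebras $(TX,\mu_X)$ and $(TY,\mu_Y)$, and $TX \bar\otimes TY$ is some algebra on $TX \otimes TY$. Since $F^\mnd T \dashv U^\mnd T$ and $U^\mnd T$ is strict monoidal, there is a canonical comparison $T(X \otimes Y) \to TX \otimes TY$ obtained as the underlying map of the unique algebra map $F^\mnd T(X \otimes Y) \to TX \bar\otimes TY$ extending $\eta_X \otimes \eta_Y \colon X \otimes Y \to TX \otimes TY$; call this $\upsilon_{XY}$. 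Similarly $\bar I$ is $I$ with an algebra structure $TI \to I$, whose underlying map is the opmonoidal unit $\upsilon_I$ after identifying $I = U^\mnd T \bar I$. The seven opmonoidal axioms then follow from the monoidal-category axioms for $(\bar\otimes, \bar I)$ together with naturality and the adjunction triangle identities — this is the routine but lengthy verification, and I would cite~\cite[Theorem~7.1]{Moerdijk2002Monads} for the bookkeeping rather than reproduce it.

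\emph{Bijectivity and the main obstacle.} Finally I would check the two constructions are mutually inverse: starting from an opmonoidal structure, building the lifting, and reading off the comparison maps on free algebras recovers $\upsilon$ because the algebra structure on $TX \otimes TY$ was defined precisely as $(\mu_X \otimes \mu_Y) \circ \upsilon_{TX, TY}$ precomposed appropriately, and one uses the $\upsilon$-$\eta$ axiom to identify the extension of $\eta_X \otimes \eta_Y$; conversely, starting from a lifting and re-running the first construction gives back the same algebra structure on $X \otimes Y$ because every algebra is a coequaliser of free ones and both $\bar\otimes$ and the reconstructed tensor preserve this coequaliser ($U^\mnd T$ creates it and $\otimes$ on $\C$ is part of the given data). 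The main obstacle is purely organisational: matching up the seven opmonoidal coherence axioms with the monoidal-category axioms on $\C^\mnd T$ is a sizeable diagram chase, and the cleanest route is to phrase everything in terms of the equivalence between ``monad opmonoidal structures'' and ``monoidal structures on the Eilenberg--Moore category making $U^\mnd T$ strict monoidal'' as already established in~\cite{Moerdijk2002Monads}, so that our Lemma is essentially a restatement of their Theorem~7.1 specialised to the symmetric case. I would therefore keep the proof short, indicating the two constructions and delegating the coherence verification to the reference.
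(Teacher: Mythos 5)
Your proposal is correct and takes essentially the same approach as the paper: the paper likewise equips $I$ with the algebra structure $\upsilon_I\colon TI\to I$ and $X\otimes Y$ with $(x\otimes y)\circ\upsilon_{XY}$, and conversely extracts $\upsilon_{XY}$ as $\theta\circ T(\eta_X\otimes\eta_Y)$ where $\theta$ is the lifted algebra structure on $TX\otimes TY$ --- exactly the adjoint transpose you describe --- while delegating the coherence bookkeeping to the cited Theorem~7.1. Your extra round-trip check goes beyond what the paper records, but the second round trip is cleaner without the coequaliser-preservation claim: since $x\otimes y$ is an algebra map out of the lifted tensor of free algebras, naturality immediately gives $(x\otimes y)\circ\theta\circ T(\eta_X\otimes\eta_Y)=\theta_{X,Y}$.
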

\begin{proof}
  Given symmetric opmonoidal structure on $\mnd T$, we define the
  lifted tensor product of $\mnd T$-algebras by
  \begin{equation}\label{eq:60}
    (TX \xrightarrow{x} X) \otimes (TY \xrightarrow{y} Y) \ = \ (T(X
    \otimes Y) \xrightarrow{\upsilon_{XY}} TX \otimes TY
    \xrightarrow{x \otimes y} X \otimes Y)
  \end{equation}
  with as unit the $\mnd{T}$-algebra
  $\upsilon_I \colon TI \rightarrow I$. Conversely, given a lifted
  tensor product on $\mnd T$-algebras, we obtain the opmonoidal
  structure map $\upsilon_{XY}$ as the composite
  \begin{equation*}
    T(X \otimes Y) \xrightarrow{T(\eta_X \otimes \eta_Y)} T(TX \otimes
    TY) \xrightarrow{\ \ \ \theta\ \ \ } TX \otimes TY
  \end{equation*}
  where $\theta$ is the $\mnd T$-algebra structure of  
  $(\mu_X \colon TTX \rightarrow TX) \otimes (\mu_Y \colon TTY
  \rightarrow TY)$, and obtain $\upsilon_I \colon TI \rightarrow I$ as
  the $\mnd T$-algebra structure of the lifted unit.
\end{proof}

Thus, the fact that the coproduct monoidal structure on $\cat{Conv}$
lifts the (assumed) convex monoidal structure $(\star, 0)$ on
$\cat{Set}$ can be re-expressed by saying that the discrete distribution monad
$\mnd D$ on $\cat{Set}$ is symmetric opmonoidal with respect to
$(\star, 0)$. Actually, more is true: $\mnd D$ is a \emph{linear
  exponential monad}.

\begin{Defn}
  \label{def:7}
  A \emph{linear exponential monad} on a symmetric monoidal category
  $(\C, \otimes, I)$ is a symmetric opmonoidal monad $\mnd{T}$ on $\C$
  such that the lifted symmetric monoidal structure on the category of
  algebras $\C^\mnd{T}$ is given by finite coproducts. More precisely,
  we means by this that the lifted unit object $(I, \nu_I)$ should be initial in
  $\C^{\mnd{T}}$; and that, for any pair of algebras $(X, x)$ and
  $(Y,y)$, the cospan
  \begin{equation*}
    (X,x) \xrightarrow{\rho_X} (X, x) \otimes (I, \nu_I) \xrightarrow{1 \otimes !} (X,x) \otimes (Y,y) \xleftarrow{! \otimes 1} (I, \nu_I) \otimes (Y,y) \xleftarrow{\lambda_Y} (Y,y)
  \end{equation*}
  should define a binary coproduct in $\C^\mnd{T}$, where we use $!$
  to denote the unique maps out of the initial object $(I, \nu_I)$.
\end{Defn}

Linear exponential \emph{co}monads originate in the categorical
semantics of linear logic~\cite[Definition~3]{Benton1993Linear} where
they interpret the exponential modality which allows a resource to be copied
freely. Importantly, the co-Kleisli category of a linear exponential
comonad on a symmetric monoidal \emph{closed} category is cartesian
closed; this is a categorical formulation of the translation of
intuitionistic logic into linear logic~\cite[\sec
5]{Girard1987Linear}. Linear exponential comonads also arise in
connection with~\cite{Blute2006Differential}'s differential
categories, which are categories endowed with an abstract notion of
differentiation, encoded by a comonad which in many cases is linear
exponential (note that in this context, the term ``monoidal coalgebra
modality'' is often used rather than ``linear exponential comonad'').

The dual notion of linear exponential \emph{monad} appears both in
linear logic, where it models the de Morgan dual connective $?$ of
$!$, and in the study of \emph{co}differential categories, of which
there are many natural examples; see, for
example,~\cite{Blute2016Derivations}. Furthermore, as we will show in
the next section, a linear exponential monad is \emph{exactly} the
structure one needs for an good abstract notion of
hypernormalisation.

The salience of this last observation is not so much that it
establishes a deep connection between probabilistic structures and
linear logic, but rather that it makes available the well-understood
calculus of reasoning for linear exponential (co)monads, as discussed
in, for example,~\cite[Section~7]{Mellies2009Categorical}
or~\cite{Blute2019Differential}. As we will see, this allows to show
that our abstract notion of hypernormalisation verifies all the
equational axioms one could wish for.
%
%
%
%


\subsection{Abstract hypernormalisation}
\label{sec:abstr-hypern}
Given a symmetric monoidal category $(\C, \otimes, I)$ with finite
coproducts and a linear exponential monad $\mnd T$ on $\C$, we
continue to write
$\varphi \colon T(A+B) \rightarrow TA \otimes TB$ for the map
underlying the $\mnd T$-algebra isomorphism
$F^\mnd{T}(A+B) \rightarrow F^\mnd T(A) \otimes F^\mnd{T}(B)$; more
generally, we write
\begin{equation}\label{eq:26}
  \varphi \colon T(A_1 + \dots + A_n) \rightarrow TA_1 \otimes \dots
  \otimes TA_n
\end{equation}
for the corresponding $n$-ary isomorphism. Note that these
isomorphisms are natural in maps of $\C$, which is to say that all
diagrams of the following form commute:
\begin{equation}\label{eq:51}
  \cd{
    {T(A_1 + \dots + A_n)} \ar[r]^-{\varphi} \ar[d]_{T(f_1 + \dots + f_n)} &
    {TA_1 \otimes \dots \otimes TA_n} \ar[d]^{Tf_1 \otimes \dots \otimes Tf_n} \\
    {T(B_1 + \dots + B_n)} \ar[r]_-{\varphi} &
    {TB_1 \otimes \dots \otimes TB_n}\rlap{ .}
  }
\end{equation}

\begin{Defn}
  \label{def:8}
  Let $(\C, \otimes, I)$ be a symmetric monoidal category with finite
  coproducts, and let $\mnd T$ be a linear exponential monad on $\C$.
  The \emph{binary hypernormalisation map}
  $\N \colon T(A+B) \rightarrow T(TA+TB)$ is the composite
  \begin{equation*}
    T(A + B) \xrightarrow{\ \varphi\ } TA \otimes TB \xrightarrow{\eta_{TA}
      \otimes \eta_{TB}} TTA \otimes TTB \xrightarrow{\varphi^{-1}} T(TA
    + TB)\rlap{ .}
  \end{equation*}
  More generally, given objects $A_1, \dots, A_n$, the \emph{$n$-ary
    hypernormalisation map} $\N \colon T(\Sigma_i A_i) \rightarrow T(\Sigma_i TA_i)$ is the composite
  \begin{equation}\label{eq:23}
    T(\sum_i A_i) \xrightarrow{\ \varphi\ } \bigotimes_i TA_i
    \xrightarrow{\otimes_i \eta_{TA_i}} \bigotimes_i TTA_i
    \xrightarrow{\varphi^{-1}} T(\sum_i TA_i)\rlap{ .}
  \end{equation}
\end{Defn}
The leading example, as we confirm in
Section~\ref{sec:conv-mono-struct}, is Jacobs' original
hypernormalisation, which arises on taking $(\C, \otimes, I)$ to be
$(\cat{Set}, \star, 0)$ and $\mnd{T} = \mnd{D}$. However, as we will
see in Section~\ref{sec:examples}, there are many other interesting
examples of abstract hypernormalisation, including continuous
probability monads on suitable categories, and examples related to
continuous functions on streams.

Before turning to these examples, we investigate the degree to which
our abstract hypernormalisation inherits the good equational
properties of Jacobs' original definition. In the statement and proof
of the following result,
we write
$\spn{f_i}_{i \in I} \colon \sum_i A_i \rightarrow B$ to denote a
copairing of maps $f_i \colon A_i \rightarrow B$ out of a coproduct.

\begin{Prop}
  \label{prop:9}
  Let $\mnd T$ be a linear exponential monad on the symmetric monoidal
  category $(\C, \otimes, I)$. The hypernormalisation
  maps~\eqref{eq:23} satisfy the conditions expressed by the
  commutativity of the following diagrams:
  \begin{enumerate}[(1)]
    \item Hypernormalisation has a left inverse:
      \begin{equation*}
        \cd[@C+1em]{
          T(\Sigma_i A_i) \ar[r]^-{\N} \ar@{=}[d]_-{}&
          T(\Sigma_i TA_i) \ar[d]^-{T\spn{T\iota_i}_i} \\
          T(\Sigma_i A_i) & TT(\Sigma_i A_i) \ar[l]^-{\mu_{\Sigma_i A_i}}
        }
      \end{equation*}
    \item Hypernormalisation is idempotent:
      \begin{equation*}
        \cd[@C+1.5em]{
          T(\Sigma_i A_i) \ar[r]^-{\N} \ar[d]_-{\N} &
          T(\Sigma_i TA_i) \ar[d]^-{\N} \\
          T(\Sigma_i TA_i) \ar[r]^-{T(\Sigma_i \eta_{TA_i})} &
          T(\Sigma_i TTA_i)
        }
      \end{equation*}
    \item Hypernormalisation is natural in maps $f_i \colon A_i \rightarrow
      B_i$:
      \begin{equation*}
        \cd{
          T(\Sigma_i A_i) \ar[r]^-{\N} \ar[d]_-{T(\Sigma_i f_i) }& T(\Sigma_i TA_i)
          \ar[d]^-{T(\Sigma_i Tf_i)} \\
          T(\Sigma_i B_i) \ar[r]^-{\N} & T(\Sigma_i TB_i)
        }
      \end{equation*}
    and in Kleisli maps
      $f_i \colon A_i \rightarrow TB_i$:      \begin{equation*}
        \cd{
          T(\Sigma_i A_i) \ar[r]^-{\N} \ar[d]_-{T(\Sigma_i f_i) }& T(\Sigma_i TA_i)
          \ar[r]^-{T(\Sigma_i Tf_i)} & T(\Sigma_i TTB_i)
          \ar[d]_-{T(\Sigma_i \mu_{B_i})} \\
          T(\Sigma_i TB_i) \ar[r]^-{\N} & T(\Sigma_i T^2B_i)
          \ar[r]^-{T(\Sigma_i \mu_{B_i})} & T(\Sigma_i TB_i)\rlap{ .}
        }
      \end{equation*}
    \end{enumerate}
\end{Prop}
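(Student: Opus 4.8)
The plan is to reduce all three groups of identities to a short list of compatibilities between the isomorphisms $\varphi$ of~\eqref{eq:26} and the monad structure of $\mnd T$; granted these, (2) and (3) are routine rewriting, while (1) needs one further observation. Write $\kappa^A_i \colon TA_i \to \bigotimes_j TA_j$ for the map underlying the $i$-th coproduct injection of $\bigotimes_j F^\mnd{T}(A_j)$ in $\C^\mnd{T}$. Since $\varphi_A$ is, by definition, the canonical comparison morphism $F^\mnd{T}(\Sigma_i A_i) \to \bigotimes_i F^\mnd{T}(A_i)$ witnessing preservation of finite coproducts by $F^\mnd{T}$, it satisfies (a)~$\varphi_A \circ T\iota_i = \kappa^A_i$ for each $i$, and (b)~naturality, $(\otimes_i Tf_i) \circ \varphi_A = \varphi_B \circ T(\Sigma_i f_i)$ for all $f_i \colon A_i \to B_i$ in $\C$. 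Moreover $\varphi_A$ underlies an \emph{algebra} isomorphism, and each $\mu_{A_i}$ underlies an algebra morphism $F^\mnd{T}(TA_i) \to F^\mnd{T}(A_i)$ (since $\mu_{A_i} \circ \mu_{TA_i} = \mu_{A_i} \circ T\mu_{A_i}$); from these facts I shall extract the ``flattening'' identity
\begin{equation*}
  \mu_{\Sigma_i A_i} \circ T\spn{T\iota_i}_i \ = \ \varphi_A^{-1} \circ (\otimes_i \mu_{A_i}) \circ \varphi_{TA} \ \colon \ T(\Sigma_i TA_i) \to T(\Sigma_i A_i)\rlap{ .}\tag{c}
\end{equation*}
Recall finally that, by construction, $\N = \varphi_{TA}^{-1} \circ (\otimes_i \eta_{TA_i}) \circ \varphi_A$.

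Granted (a)--(c), the three identities follow quickly. For~(3): applying (b) at the maps $Tf_i$ and then at the $f_i$, together with naturality of $\eta$, yields $T(\Sigma_i Tf_i) \circ \N = \N \circ T(\Sigma_i f_i)$ directly; the Kleisli case is the identical computation with $f_i \colon A_i \to TB_i$, and the square displayed in the statement follows on postcomposing with $T(\Sigma_i \mu_{B_i})$. For~(2): rewriting $T(\Sigma_i \eta_{TA_i})$ via (b) and cancelling a factor $\varphi_{TA} \circ \varphi_{TA}^{-1}$ on each side reduces the claim to $\eta_{TTA_i} \circ \eta_{TA_i} = T\eta_{TA_i} \circ \eta_{TA_i}$, which is naturality of $\eta$ at $\eta_{TA_i}$. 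For~(1): substituting the formula for $\N$ and invoking (c),
\begin{equation*}
  \mu_{\Sigma_i A_i} \circ T\spn{T\iota_i}_i \circ \N \ = \ \varphi_A^{-1} \circ (\otimes_i \mu_{A_i}) \circ (\otimes_i \eta_{TA_i}) \circ \varphi_A \ = \ \varphi_A^{-1} \circ \varphi_A \ = \ \id\rlap{ ,}
\end{equation*}
using the unit law $\mu_{A_i} \circ \eta_{TA_i} = \id$.

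The real work is the proof of~(c). Both sides are (underlying maps of) algebra morphisms $F^\mnd{T}(\Sigma_i TA_i) \to F^\mnd{T}(\Sigma_i A_i)$: the left-hand side is the transpose of $\spn{T\iota_i}_i \colon \Sigma_i TA_i \to T(\Sigma_i A_i)$ under $F^\mnd{T} \dashv U^\mnd{T}$, and the right-hand side is a composite of algebra morphisms --- here one uses that the lifted tensor $\otimes$ on $\C^\mnd{T}$ is finite coproduct, so that $\otimes_i \mu_{A_i}$ is the cotuple of the algebra morphisms $\mu_{A_i}$. Since $\varphi_{TA}$ exhibits $F^\mnd{T}(\Sigma_i TA_i)$ as the coproduct $\bigotimes_i F^\mnd{T}(TA_i)$, two algebra morphisms out of it coincide as soon as they agree after precomposition with $\eta_{\Sigma_i TA_i} \circ \iota_i \colon TA_i \to T(\Sigma_i TA_i)$ for every $i$. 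Chasing: on the left, naturality of $\eta$ at $\spn{T\iota_i}_i$ and then the unit law $\mu_{\Sigma_i A_i} \circ \eta_{T(\Sigma_i A_i)} = \id$ collapse the composite to $T\iota_i$; on the right, one applies in turn naturality of $\eta$ at $\iota_i$, then (a) for the family $(TA_j)_j$, then the identity $(\otimes_j \mu_{A_j}) \circ \kappa^{TA}_i = \kappa^A_i \circ \mu_{A_i}$ (the functorial action of the coproduct on the $\mu_{A_j}$), then the unit law $\mu_{A_i} \circ \eta_{TA_i} = \id$, and finally (a) once more, again arriving at $T\iota_i$.

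The main obstacle is precisely~(c): the left-hand side is most naturally described in terms of the \emph{underlying} coproduct $\Sigma_i TA_i$ and its cotuple $\spn{T\iota_i}_i$, and this must be reconciled with the rather different tensor-coproduct structure carried by $\C^\mnd{T}$; comparing algebra morphisms out of $F^\mnd{T}(\Sigma_i TA_i)$ on the $n$ generators $\eta_{\Sigma_i TA_i} \circ \iota_i$ accomplishes this cleanly. All the rest is formal manipulation with the naturality of $\varphi$ and of $\eta$ and the monad unit laws.
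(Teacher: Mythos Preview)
Your proof is correct and follows essentially the same route as the paper's: your identity~(c) is precisely the paper's key diagram~\eqref{eq:25}, proved by the same device of comparing two algebra maps out of $F^\mnd{T}(\Sigma_i TA_i)$ on coproduct injections (you check on generators $\eta \circ \iota_i$, the paper on the algebra coprojections $T\iota_i$, which amounts to the same thing by freeness), and parts~(2) and~(3) are handled by the same conjugation-by-$\varphi$ and naturality arguments.
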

\begin{proof}
  To prove (1), we first claim that each diagram as to the left below
  commutes. This is a general fact about linear exponential
  monads---see, for
  example~\cite[Section~7]{Mellies2009Categorical}---but we
  include the proof for the sake of self-containedness.
  \begin{equation}\label{eq:25}
    \cd{
      T(\Sigma_i TA_i) \ar[d]_-{T\spn{T\iota_i}_i} \ar[r]^-{\varphi} & \otimes_i TTA_i
      \ar[dd]^-{\otimes_i \mu_{A_i}} \\
      TT(\Sigma_i A_i) \ar[d]_-{\mu_{\Sigma_i A_i}} \\
      T(\Sigma_i A_i) \ar[r]^-{\varphi} & \otimes_i TA_i
    } \quad
   \cd{
     TTA_i \ar[d]_-{TT\iota_i} \ar[rr]^-{\jmath_i}
     \ar[dr]^-{\mu_{TA_i}} & & \otimes_i TTA_i \ar[dd]^-{\otimes_i
        \mu_{A_i}} \\
      TT(\Sigma_i A_i) \ar[d]_-{\mu_{\Sigma_i A_i}} & TA_i
      \ar[dl]_-{T\iota_i}\ar[dr]^-{\jmath_i}\\
      T(\Sigma_i A_i) \ar[rr]^-{\varphi} & & \otimes_i TA_i\rlap{ .}
    }
  \end{equation}
  
  Note that both paths are $\mnd T$-algebra maps
  $F^\mnd{T}(\Sigma_i TA_i) \rightarrow \otimes_i F^\mnd{T}A_i$ with
  as domain a coproduct of the $\mnd T$-algebras $F^\mnd{T} A_i$. So
  it suffices to show commutativity on precomposing by a
  coproduct coprojection
  $T\iota_i \colon TTA_i \rightarrow T(\Sigma_i TA_i)$. This means
  showing the outside of the diagram to the right above commutes, wherein we
  write $\jmath_i$ for a coproduct coprojection
  $X_i \rightarrow \otimes_i X_i$ in the category of
  $\mnd T$-algebras. But the bottom triangle commutes
  by definition of $\varphi$, the left region by naturality of $\mu$
  and the right region by naturality of the coproduct coprojections
  $\jmath$.

  Now commutativity in~\eqref{eq:25} yields commutativity in the right
  part of:
  \begin{equation*}
    \cd{
      T(\Sigma_i A_i) \ar[d]_-{\varphi} \ar[r]^-{\N} & T(\Sigma_i TA_i) \ar[r]^-{T\spn{T\iota_i}_i} \ar@{<-}[d]^-{\varphi^{-1}} & 
      TT(\Sigma_i A_i) \ar[r]^-{\mu_{\Sigma_i A_i}} &
      T(\Sigma_i A_i) \ar@{<-}[d]^-{\varphi^{-1}} \\
      \otimes_i TA_i \ar[r]^-{\otimes_i \eta_{TA_i}} & \otimes_i TTA_i 
      \ar[rr]^-{\otimes_i \mu_{A_i}}  & & \otimes_i TA_i
    }
  \end{equation*}
  whose left part commutes by definition of $\N$. So the outside
  commutes; now by the monad axioms for $\mnd T$, the lower composite is
  the identity, whence also the upper
  one as required for (1).

  Turning to (2),
  we observe that pre-composing by $\varphi^{-1}$ and post-composing
  by $\varphi$ yields the square
  \begin{equation*}
    \cd[@C+1.5em]{
      {\otimes_i TA_i} \ar[r]^-{\otimes_i \eta_{TA_i}}
      \ar[d]_{\otimes_i \eta_{TA_i}} &
      {\otimes_i T^2A_i} \ar[d]^{\otimes_i \eta_{T^2A_i}} \\
      {\otimes_i T^2A_i} \ar[r]^-{\otimes_i T\eta_{TA_i}} &
      {\otimes_i T^3A_i}
    }
  \end{equation*}
  which commutes by functoriality of $\otimes$ and naturality of
  $\eta$. Finally, for (3), commutativity of the first diagram is
  clear from the naturality~\eqref{eq:51} of the maps
  $\varphi \colon T(\Sigma_i A_i) \rightarrow \otimes_i TA_i$ in the
  $A_i$, the functoriality of $\otimes_i$, and the naturality of the
  unit $\eta \colon 1 \Rightarrow T$. Commutativity of the second
  diagram follows trivially from the first after postcomposing by
  $T(\Sigma_i \mu_{B_i})$.
\end{proof}

The preceding conditions generalise ones appearing
in~\cite[Lemma~5]{Jacobs2017Hyper}. Our (2) and (3) correspond exactly
to its (3) and (5), while our (1) corresponds either to the right
diagram of its (2) or to its (4). We have no correlate of the right
diagram of part (1) of \cite[Lemma~5]{Jacobs2017Hyper}, since it uses
the canonical \emph{strength} of the discrete distribution monad
$\mnd{D}$ with respect to the cartesian monoidal structure of
$\cat{Set}$, and it is not clear what this should be replaced with in
general. This leaves only the left diagrams appearing in (1) and (2)
of \cite[Lemma~5]{Jacobs2017Hyper}. Interestingly, while these make
sense in our setting, they do not hold without additional assumptions.
For the left diagram of (2), this condition is:

\begin{Defn}
  \label{def:18}
  A monad $\mnd T$ on a category $\C$ with a terminal object $1$ is
  \emph{affine} if the unique map $T1 \rightarrow 1$ is
  invertible (necessarily with inverse $\eta_1 \colon 1 \rightarrow T1$).
\end{Defn}

\begin{Prop}
  \label{prop:12}
  Let $\mnd T$ be an affine linear exponential monad on the symmetric
  monoidal category $(\C, \otimes, I)$ with terminal object $1$. The
  hypernormalisation maps~\eqref{eq:23} satisfy the additional
  condition that:
  \begin{enumerate}\addtocounter{enumi}{3}
  \item Destroying the output structure destroys hypernormalisation:
    \begin{equation*}
      \cd[@C-1em]{
        {T(\Sigma_i A_i)} \ar[rr]^-{\N} \ar[dr]_-{T(\Sigma_i !)} &&
        {T(\Sigma_i TA_i)} \ar[dl]^-{T(\Sigma_i !)}\rlap{ .}\\ &
        {T(\Sigma_i 1)}
      }
    \end{equation*}
  \end{enumerate}
\end{Prop}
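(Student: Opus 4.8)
The plan is a direct diagram chase. I would unwind the definition $\N = \varphi^{-1} \circ (\bigotimes_i \eta_{TA_i}) \circ \varphi$ from~\eqref{eq:23} and push the structure maps $!\colon TA_i \to 1$ through the composite, using naturality of $\varphi$, naturality of $\eta$, and---at exactly one point---the affineness of $\mnd T$.

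First I would invoke naturality of the isomorphisms $\varphi$ in their object arguments. Applied to the maps $!\colon TA_i \to 1$, this gives $T(\Sigma_i !) \circ \varphi^{-1} = \varphi^{-1} \circ (\bigotimes_i T(!_{TA_i}))$, where the $\varphi^{-1}$ on the right is the instance associated to the objects $1$. Substituting this into the definition of $\N$ and using functoriality of $\otimes$, the composite $T(\Sigma_i !) \circ \N$ rewrites as $\varphi^{-1} \circ \bigl(\bigotimes_i (T(!_{TA_i}) \circ \eta_{TA_i})\bigr) \circ \varphi$, with $\varphi^{-1}$ again the instance for the objects $1$.

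Next I would simplify each tensor factor $T(!_{TA_i}) \circ \eta_{TA_i}\colon TA_i \to T1$. Naturality of the unit $\eta$ at the map $!_{TA_i}\colon TA_i \to 1$ gives $T(!_{TA_i}) \circ \eta_{TA_i} = \eta_1 \circ !_{TA_i}$. This is the step that uses affineness, and the only one I expect to be non-formal: I claim $\eta_1 \circ !_{TA_i} = T(!_{A_i})$ as maps $TA_i \to T1$. Indeed $!_{T1} \circ T(!_{A_i}) = !_{TA_i}$ by uniqueness of maps into a terminal object, and affineness says precisely that $!_{T1}\colon T1 \to 1$ is invertible with inverse $\eta_1$; precomposing that identity with $\eta_1$ yields the claim. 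Without affineness the maps $T(!_{A_i})$ and $\eta_1 \circ !_{TA_i}$ differ in general, and condition~(4) genuinely fails---so this is exactly where the hypothesis is needed.

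Assembling these facts, the composite becomes $T(\Sigma_i !) \circ \N = \varphi^{-1} \circ (\bigotimes_i T(!_{A_i})) \circ \varphi$, and one final application of naturality of $\varphi$---now at the maps $!_{A_i}\colon A_i \to 1$---identifies the right-hand side with $T(\Sigma_i !)\colon T(\Sigma_i A_i) \to T(\Sigma_i 1)$, which is the asserted commutativity. Everything apart from the affineness identity in the middle step is routine bookkeeping of naturality and functoriality squares, so I anticipate no real obstacle beyond isolating and stating that one step cleanly.
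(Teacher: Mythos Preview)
Your proof is correct and follows essentially the same route as the paper's: both conjugate by $\varphi$ (using its naturality) to reduce to a statement about tensor factors, use naturality of $\eta$ to rewrite $T(!_{TA_i})\circ\eta_{TA_i}$ as $\eta_1\circ !_{TA_i}$, and then invoke affineness to identify this with $T(!_{A_i})$. The only cosmetic difference is that the paper checks the affineness step by postcomposing both sides with the invertible $!\colon T1\to 1$, whereas you precompose with its inverse $\eta_1$; these are the same manoeuvre.
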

\begin{proof}
  We may precompose (4) by
  $\varphi^{-1} \colon \otimes_i TA_i \rightarrow T(\Sigma_i A_i)$,
  postcompose by
  $\varphi \colon T(\Sigma_i 1) \rightarrow \otimes_i T1$, and rewrite
  using the definition of $\N$ and the naturality~\eqref{eq:51} to
  obtain the triangle to the left in:
  \begin{equation*}
    \cd[@!C@C-2em]{
      {\otimes_i TA_i} \ar[rr]^-{\otimes_i \eta_{TA_i}}
      \ar[dr]_-{\otimes_i T!} &&
      {\otimes_i TTA_i} \ar[dl]^-{\otimes_i T!} & &
      {\otimes_i TA_i} \ar[rr]^-{\otimes_i !}
      \ar[dr]_-{\otimes_i T!} &&
      {\otimes_i 1} \ar[dl]^-{\otimes_i \eta_1}\\ & 
      {\otimes_i T1} & & & &
      {\otimes_i T1}
    }
  \end{equation*}
  whose commutativity is equivalent to that of (4). But by naturality
  of $\eta$, this triangle is equally the triangle on the right above,
  which commutes since postcomposing by the invertible map
  $\otimes_i ! \colon \otimes_i T1 \rightarrow \otimes_i 1$ yields
  along both sides the map
  $\otimes_i ! \colon \otimes_i TA_i \rightarrow \otimes_i 1$.
\end{proof}
  
Finally, we consider what is necessary for the left diagram of part
(1) of \cite[Lemma~5]{Jacobs2017Hyper} to commute in our setting.
\begin{Defn}
  \label{def:12}
  A symmetric monoidal category $(\C, \otimes, I)$ is said to be
  \emph{co-affine} if its unit object is initial.
\end{Defn}
So, for example, the convex monoidal structure and the cocartesian
monoidal structure on $\cat{Set}$ are co-affine, while the cartesian
monoidal structure is not so. The point of this extra condition is
that it allows us to prove:
\begin{Lemma}
  \label{lem:7}
  Let $\mnd T$ be a linear exponential monad on the symmetric monoidal
  co-affine category $(\C, \otimes, I)$. Finite coproduct
  coprojections $\jmath_i \colon X_i \rightarrow \otimes_i X_i$ in the
  category of $\mnd T$-algebras are natural with respect to
  \emph{arbitrary} maps of $\C$.
\end{Lemma}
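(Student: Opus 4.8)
The plan is to compute the underlying $\C$-morphism of each coprojection $\jmath_k \colon X_k \to \bigotimes_i X_i$ explicitly in terms of the symmetric monoidal structure of $\C$ and the canonical maps out of $I$, and then to read off naturality by a one-line diagram chase. The only genuine content will be the explicit description of $\jmath_k$; naturality will then be essentially automatic.

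By the construction in the proof of Lemma~\ref{lem:3}, the lifted symmetric monoidal structure on $\C^\mnd{T}$ has underlying-object functor and coherence isomorphisms those of $\C$, so $U^\mnd{T} \colon \C^\mnd{T} \to \C$ is \emph{strict} symmetric monoidal, with unit the $\mnd T$-algebra $\upsilon_I \colon TI \to I$ sitting over the object $I$. Since $\mnd T$ is linear exponential, this lifted tensor \emph{is} finite coproduct on $\C^\mnd{T}$, so its unit $(I, \upsilon_I)$ is initial there; write $u_X \colon (I, \upsilon_I) \to (X,x)$ for the unique map to a $\mnd T$-algebra $(X,x)$. It is a standard fact that in a monoidal category whose tensor is the binary coproduct with the initial object as unit, the $k$-th coprojection factors as the coherence isomorphism $X_k \cong I \otimes \cdots \otimes X_k \otimes \cdots \otimes I$ (inserting $I$ in every slot but the $k$-th) followed by $\bigotimes_i g_i$ with $g_k = 1_{X_k}$ and $g_i \colon I \to X_i$ the unique map for $i \neq k$. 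Transporting this along the strict monoidal functor $U^\mnd{T}$ and using its faithfulness, the underlying $\C$-morphism of $\jmath_k$ is the composite
\begin{equation*}
  X_k \xrightarrow{\ w_k\ } I \otimes \cdots \otimes X_k \otimes \cdots \otimes I
  \xrightarrow{\ \bar u_{X_1} \otimes \cdots \otimes 1_{X_k} \otimes \cdots \otimes \bar u_{X_n}\ }
  X_1 \otimes \cdots \otimes X_n
\end{equation*}
in $\C$, where $w_k$ is the coherence isomorphism, natural in $X_k$, and $\bar u_{X_i} = U^\mnd{T} u_{X_i} \colon I \to X_i$.

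Co-affinity now does the rest: since $I$ is initial in $\C$, each $\bar u_{X_i}$ is \emph{the} unique $\C$-morphism $I \to X_i$, so that $f_i \circ \bar u_{X_i} = \bar u_{Y_i}$ for \emph{every} $\C$-morphism $f_i \colon X_i \to Y_i$ between underlying objects of $\mnd T$-algebras, whether or not it is an algebra map. For arbitrary such $f_1, \dots, f_n$ we then compute
\begin{equation*}
  (\textstyle\bigotimes_i f_i) \circ \jmath_k
  = (\bar u_{Y_1} \otimes \cdots \otimes f_k \otimes \cdots \otimes \bar u_{Y_n}) \circ w_k
  = (\bar u_{Y_1} \otimes \cdots \otimes 1_{Y_k} \otimes \cdots \otimes \bar u_{Y_n}) \circ w_k \circ f_k
  = \jmath_k \circ f_k\rlap{ ,}
\end{equation*}
using functoriality of $\otimes$, the identity $f_i \bar u_{X_i} = \bar u_{Y_i}$, and naturality of $w_k$ in $X_k$; this is exactly the claimed naturality square.

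The one substantive step is thus the identification in the second paragraph of the underlying map of the abstract coproduct coprojection $\jmath_k$ with the displayed structural composite built from unit constraints and maps out of $I$; everything after that is bookkeeping. This is also precisely where co-affinity is indispensable: without it, $\bar u_{X_i}$ is merely the underlying map of the unique \emph{algebra} map out of $(I, \upsilon_I)$, for which $f_i \bar u_{X_i} = \bar u_{Y_i}$ can fail when $f_i$ is not an algebra map, and then both the argument and the statement break down — mirroring the way the co-affinity of the Giry monoidal structure on $\cat{Set}$ underpinned the extension of $\star$ to $\cat{Conv}_\mathrm{arb}$.
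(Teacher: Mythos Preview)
Your argument is correct and is essentially the paper's own proof: both express the coprojection $\jmath_k$ as the unit coherence isomorphism followed by tensoring with the unique maps $I \to X_i$, and then invoke co-affinity to make those maps natural in arbitrary $\C$-morphisms. The only cosmetic differences are that the paper reduces to the binary case and spells out the unique algebra map $I \to Y$ as the composite $I \xrightarrow{\psi} T0 \xrightarrow{T!} TY \xrightarrow{y} Y$ before collapsing it via co-affinity, whereas you handle the $n$-ary case directly and treat that identification abstractly.
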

\begin{proof}
  Since non-empty finite coproducts can be constructed from binary
  ones, it suffices to prove the binary case. Given $\mnd T$-algebras
  $(X,x)$ and $(Y,y)$, we know from Definition~\ref{def:7} that the
  coproduct coprojection $(X,x) \rightarrow (X,x) \otimes (Y,y)$
  is given by the composite
  \begin{equation*}
    (X,x) \xrightarrow{\rho_X} (X,x) \otimes (I,\nu_I) \xrightarrow{1 \otimes !} (X, x) \otimes (Y,y)\rlap{ ,}
  \end{equation*}
  where $! \colon (I,\nu_I) \rightarrow (Y,y)$ is the unique map of
  $\mnd T$-algebras induced by the initiality of $(I, \nu_I)$ in
  $\C^{\mnd{T}}$. By co-affineness, the underlying map in $\C$ of this composite is
  \begin{equation*}
    X \xrightarrow{\rho_X} X \otimes I \xrightarrow{1 \otimes !} X \otimes Y\rlap{ ,}
  \end{equation*}
  where $!$ is the unique map out of the initial object $I \in \C$.
  Given this description, the desired naturality with respect to
  arbitrary maps of $\C$ is now immediate.
\end{proof}
\begin{Prop}
  \label{prop:10}
  Let $\mnd T$ be a linear exponential monad on the symmetric monoidal
  co-affine category $(\C, \otimes, I)$. The hypernormalisation
  maps~\eqref{eq:23} satisfy the additional condition that:
  \begin{enumerate}\addtocounter{enumi}{4}
  \item Normalising trivial input gives trivial output:
    \begin{equation*}
      \cd{
        {TA_i} \ar[r]^-{T\iota_i} \ar[d]_{\iota_i}
        & 
        {T(\Sigma_i A_i)} \ar[d]^{\N} \\ 
        {\Sigma_i TA_i} \ar[r]_-{\eta_{\Sigma_i TA_i}} &
        {T(\Sigma_i TA_i)}\rlap{ .}
      }
    \end{equation*}
  \end{enumerate}
\end{Prop}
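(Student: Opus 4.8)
The plan is to test both legs of the square against the $n$-ary isomorphism $\varphi \colon T(\Sigma_i TA_i) \rightarrow \bigotimes_i TTA_i$, in the manner of the proofs of Propositions~\ref{prop:9} and~\ref{prop:12}, and then to check that each leg becomes the same coprojection-composite in $\C$. Write $\jmath_i \colon F^{\mnd T}A_i \rightarrow \bigotimes_i F^{\mnd T}A_i$ and $\jmath'_i \colon F^{\mnd T}(TA_i) \rightarrow \bigotimes_i F^{\mnd T}(TA_i)$ for the coproduct coprojections in $\C^{\mnd T}$---which exist since, $\mnd T$ being linear exponential, the lifted tensor on algebras is finite coproduct---and reuse these symbols for the underlying maps in $\C$. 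I would begin by recording two consequences of the way $\varphi$ is defined: first, that $\varphi \colon T(\Sigma_i A_i) \rightarrow \bigotimes_i TA_i$, being the canonical comparison between two coproduct cocones on the family $(F^{\mnd T}A_i)_i$, satisfies $\varphi \circ T\iota_i = \jmath_i$; and second, that by the description of $\varphi$ in the proof of Proposition~\ref{prop:1} as the extension of $\Sigma_i A_i \xrightarrow{\Sigma_i \eta} \Sigma_i TA_i \xrightarrow{\spn{\jmath_i}_i} \bigotimes_i TA_i$ to a $\mnd T$-algebra map, we have the identity of $\C$-maps $\varphi \circ \eta_{\Sigma_i A_i} = \spn{\jmath_i}_i \circ (\Sigma_i \eta_{A_i})$. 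Both facts hold verbatim for the isomorphism $\varphi \colon T(\Sigma_i TA_i) \rightarrow \bigotimes_i TTA_i$, with $\jmath'_i$ in place of $\jmath_i$.

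Next I would compute the image of the clockwise leg $\N \circ T\iota_i$ under $\varphi$: unwinding the definition~\eqref{eq:23} of $\N$ and applying $\varphi \circ T\iota_i = \jmath_i$ rewrites it as $(\otimes_i \eta_{TA_i}) \circ \jmath_i$, where $\otimes_i \eta_{TA_i}$ is the tensor in $\C$ of the maps $\eta_{TA_i} \colon TA_i \rightarrow TTA_i$. As these are \emph{not} $\mnd T$-algebra maps, this is the one step that is not purely formal, and it is where co-affinness is used: Lemma~\ref{lem:7} tells us that the coprojection $\jmath_i$ is natural with respect to arbitrary maps of $\C$, so applying that naturality to the family $(\eta_{TA_i})_i$ gives $(\otimes_i \eta_{TA_i}) \circ \jmath_i = \jmath'_i \circ \eta_{TA_i}$.

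Finally I would compute the image of the anticlockwise leg $\eta_{\Sigma_i TA_i} \circ \iota_i$ under $\varphi$: the second identity recorded above rewrites it as $\spn{\jmath'_i}_i \circ (\Sigma_i \eta_{TA_i}) \circ \iota_i$, and then naturality of the $\C$-coprojection $\iota_i \colon TA_i \rightarrow \Sigma_i TA_i$ together with the copairing identity $\spn{\jmath'_i}_i \circ \iota_i = \jmath'_i$ collapses it to $\jmath'_i \circ \eta_{TA_i}$ as well. Since $\varphi$ is invertible, the two legs coincide, which is precisely the stated commutativity. The crux of the argument is the appeal to Lemma~\ref{lem:7} in the second paragraph: without co-affinness the tensor $\otimes_i \eta_{TA_i}$ of non-algebra maps cannot in general be commuted past the coprojection $\jmath_i$, so this is the step I expect to be the main obstacle.
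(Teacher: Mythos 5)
Your proof is correct and follows essentially the same route as the paper: identify $\varphi\circ T\iota_i$ with the algebra-coproduct coprojection, use naturality of $\eta$ to handle the anticlockwise leg, and invoke Lemma~\ref{lem:7} (where co-affinness enters) to commute $\otimes_i\eta_{TA_i}$ past the coprojection. The paper phrases this as a single commuting square conjugated by $\varphi$ rather than computing each leg separately, but the content is identical.
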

\begin{proof}
  By definition of $\N$ and naturality of $\eta$, this is equally to
  show that the diagram below left commutes. Since $\varphi$ is the
  underlying map of the unique comparison between the coproducts
  $F^\mnd{T}(\Sigma_i A_i)$ and $\otimes_i F^\mnd{T}A_i$ in
  $\C^\mnd{T}$, it in particular commutes with the coproduct
  coprojections, so that this diagram is equally the one below right,
  which commutes by Lemma~\ref{lem:7}.
  \begin{equation*}
    \cd{
      {TA_i} \ar[r]^-{T\iota_i} \ar[d]_{\eta_{TA_i}} & 
      {T(\Sigma_i A_i)} \ar[r]^{\varphi} &
      \otimes_i TA_i \ar[d]^-{\otimes_i \eta_{TA_i}} & 
      {TA_i} \ar[r]^-{\jmath_i} \ar[d]_{\eta_{TA_i}} & 
      \otimes_i TA_i \ar[d]^-{\otimes_i \eta_{TA_i}}\\ 
      {TTA_i} \ar[r]^-{T\iota_i} &
      {T(\Sigma_i TA_i)} \ar[r]^-{\varphi} & \otimes_i TTA_i &
            {TTA_i} \ar[r]^-{\jmath_i} & \otimes_i TTA_i
    }\qedhere
  \end{equation*}
\end{proof}

\subsection{Channel abstraction}
\label{sec:channel-abstraction}

In this short section, we explain how, in our abstract setting,
hypernormalisation can be used to build an analogue of the
channel-to-abstract-channel construction of~\cite{McIver2014Abstract}
described in the introduction. We are motivated in doing this by the
examples of hypernormalisation for \emph{continuous} probability
monads described in
Sections~\ref{sec:expectation-monad}--\ref{sec:kantorovich-monad}
below; thus, in what follows, the reader should keep in mind the
interpretation that $\C$ is some category of ``spaces'', and that
$\mnd{T}$ is a monad of ``distributions'' on $\C$.

Let us recap what the construction should do. The input data, a
channel, is simply a map $P \colon A \rightarrow TB$, which we think
of as giving probabilities that a private input in $A$ will give rise
to a public output in $B$. The output data, the associated abstract
channel, is a map $P^r \colon TA \rightarrow TTA$, thought of as
giving the probabilities that a given prior distribution on $A$ should
update to a given posterior distribution on $A$ via conditioning on an
observed output in $B$.

Following Jacobs' lead, we will build $P^r$ from $P$ via a
composite~\eqref{eq:36}. The main difficulty comes in finding an
analogue of the first map $\tilde P$ therein. This map, we recall, was
itself a composite~\eqref{eq:50}, one of whose terms is the canonical
cartesian \emph{strength} of the finite discrete distribution monad
$\mnd{D}$. We already remarked in the previous section that it was not
clear what to replace this with in general, so here we adopt the most
simple-minded approach that is compatible with our examples: we simply
assume that, again, our monad $\mnd T$ has a cartesian strength.

This is enough to generalise the first map in~\eqref{eq:36}; however,
there is still a small problem with the second map, which should be a
map $T(A \times B) \rightarrow T(TA \times B)$ given by
hypernormalisation. In the motivating example, this was unproblematic:
we could use the fact that $B$ was a finite set $\{1, \dots, n\}$ to
express $A \times B$ as an $n$-fold coproduct $A + \dots + A$, and
then apply $n$-ary hypernormalisation. In our general context, we can
do something similar so long as finite coproducts distribute over
finite products in $\C$, and we assume that $B$ is an $n$-ary
coproduct $1 + \dots + 1$; then by distributivity we have
$A \times B \cong A + \dots + A$ and can proceed as before.
The above discussion thus justifies giving:

\begin{Defn}
  \label{def:28}
  Let $(\C, \otimes, I)$ be a symmetric monoidal category with finite
  products and distributive finite coproducts. Let $\mnd T$ be a
  linear exponential monad on $\C$ endowed with a cartesian strength.
  Given a map $P \colon A \rightarrow TB$, where $B = 1 + \dots + 1$
  is an $n$-fold coproduct of the terminal object, we define the
  \emph{associated abstract channel} to be the map
  $P^r \colon TA \rightarrow TTA$ given by
  \begin{equation*}
    TA \xrightarrow{\tilde P} T(A \times B) \xrightarrow{\N} T(TA
    \times B) \xrightarrow{T(\pi_1)} TTA
  \end{equation*}
  wherein the first map is the composite
  \begin{equation*}
    T A \xrightarrow{T(1, P)} T(A \times TB)
    \xrightarrow{\mnd{T}(\mathsf{str})} TT(A \times B) \xrightarrow{\mu} T(A
    \times B)\rlap{ ,}
  \end{equation*}
  and the second map is the composite of the $n$-ary
  hypernormalisation map $T(A + \dots
  + A) \rightarrow T(TA + \dots + TA)$ with  isomorphisms $T(A
  \times B) \cong T(A + \dots + A)$ and $T(TA + \dots + TA) \cong T(TA \times B)$.
\end{Defn}

The restriction we impose on the form of $B$ above is a real one; in
our examples, it means that our channel $P \colon A \rightarrow TB$
involves a \emph{continuous} space of hidden inputs but only a
\emph{finite discrete} space of observable outputs. While this is
already progress, allowing an arbitrary observation space $B$ would
require something, more general than hypernormalisation, which gave a
smooth categorical treatment of disintegration for probability
measures on a product space.


\section{Tricocycloids and the convex monoidal structure}
\label{sec:conv-mono-struct}

In this section, we complete our description of the convex monoidal
structure $(\star, 0)$ on $\cat{Set}$ with respect to which the
discrete distribution monad is linear exponential, and by doing so
exhibit Jacobs' hypernormalisation as a particular instance of our
abstract hypernormalisation.

The aspects of the convex monoidal structure we have not yet
discussed are its unit, associativity and symmetry constraints. Given
that it should lift to the coproduct monoidal structure on
$\cat{Conv}$, we can read off these constraints from the corresponding
ones for coproducts in $\cat{Conv}$. However, there is still work to
do: we must show the maps involved can be defined in a way that does not
depend on any convex structure, but only on the underlying sets.

We clarify the combinatorics involved in this by using a notion from
quantum algebra known as a \emph{tricocycloid}. We begin this section
by explaining how tricocycloids give rise to monoidal structures, and
how they relate to operads in the sense of~\cite{May1972The-geometry};
we then exhibit a tricocycloid in $\cat{Set}$ which will allow us to
construct the desired convex monoidal
structure. 

\subsection{Tricocycloids}
\label{sec:tricocycloids}
Although our applications will primarily be in the category of sets,
the construction we are about to give naturally exists in a more
general setting. Rather than just the cartesian monoidal category of
sets, it starts from a symmetric monoidal category $(\C, \otimes, I)$
with finite \emph{distributive} coproducts---i.e., finite coproducts
that are preserved by tensor in each variable. For simplicity, we
write $\otimes$ as if it were \emph{strictly} associative, and for
brevity, we may denote tensor by mere juxtaposition. We can now ask:
given an object $H \in \C$---which in the motivating case will be the
set $(0,1$)---under what circumstances is there a symmetric monoidal
structure $(\star, 0)$ on $\C$ with unit the initial object, and
tensor
\begin{equation}\label{eq:19}
  A \star B \defeq A \,+\, H \otimes A \otimes B  \,+\, B \qquad \text?
\end{equation}

First let us consider what is necessary to get a \emph{monoidal}
structure. The unit constraints
$A \star 0 \rightarrow A$ and
$0 \star A \rightarrow A$ are easy; we have canonical
isomorphisms
\begin{equation}\label{eq:17}
  A + H A 0 + 0 \xrightarrow{\cong} A + 0 + 0
  \xrightarrow{\cong} A \quad
  0 + H  0 B + B \xrightarrow{\cong} 0 + 0 + B
  \xrightarrow{\cong} B
\end{equation}
using the preservation of the initial object by
tensor on each side. The
associativity constraint
$(A \star B) \star C \rightarrow A \star (B \star C)$ is
more interesting; it involves a map
\begin{equation*}
  (A + HAB + B) + H (A + H A
  B + B) C + C \rightarrow A + H  A (B + H
  B  C + C) + B + H  B C + C
\end{equation*}
which, 
since tensor preserves binary coproducts in each variable, is equally
a map
\begingroup\makeatletter\def\f@size{9.5}\check@mathfonts
\begin{equation*}
  A + HAB + B + HAC + HHA
    BC + HBC + C \ \xrightarrow{\ }\  A + H  A B + HAH
  B  C + HAC + B + H  B C + C\ \text.
\end{equation*}
\endgroup

Now the coherence axioms relating the associativity and the unit
constraints force this map to take the summands
$A, B, C, HAC, HAC, HBC$ of the domain to the corresponding summands
of the codomain via identity maps, so leaving only the $HHABC$-summand
of the domain unaccounted for. Though we are not forced to, it would
be most natural to map this summand to the $HAHBC$-summand of the
codomain via a composite
\begin{equation}\label{eq:16}
  H H A B C \xrightarrow{v  1
     1 1} H H A B C
  \xrightarrow{1 \sigma 1 1} H A H B C\rlap{ ,}
\end{equation}
where here $v \colon HH \rightarrow HH$ is some fixed invertible map,
and $\sigma$ is the symmetry.

At this point, we have all the data of a monoidal structure,
satisfying all the axioms except perhaps for the Mac Lane pentagon
axiom, which equates the two arrows
$((A \star B) \star C) \star D \rightrightarrows A \star (B \star (C
\star D))$ constructible from the associativity constraint cells. If
we expand out the definitions, we find that this equality is automatic
on most summands of the domain; the only non-trivial case to be
checked is the equality of the two morphisms
$HHHABCD \rightrightarrows HAHBHCD$ given by the respective string
diagrams (read from top-to-bottom):
\begin{equation}\label{eq:10}
\vcenter{\hbox{\begin{tikzpicture}[y=0.70pt, x=0.70pt, yscale=-1.000000, xscale=1.000000, inner sep=0pt, outer sep=0pt]
\path[draw=black,line join=miter,line cap=butt,even odd rule,line width=0.800pt]
  (80.0000,922.3622) .. controls (80.3536,930.1404) and (84.9497,927.8414) ..
  (86.2500,939.8622)  node[above=0.15cm,at start] {$\scriptstyle H$};
\path[draw=black,line join=miter,line cap=butt,even odd rule,line width=0.800pt]
  (180.0000,922.3622) -- (180.0000,1032.3622)  node[above=0.15cm,at start] {$\scriptstyle C$};
\path[draw=black,line join=miter,line cap=butt,even odd rule,line width=0.800pt]
  (200.0000,922.3622) -- (200.0000,1032.3622)  node[above=0.15cm,at start] {$\scriptstyle D$};
\path[draw=black,line join=miter,line cap=butt,even odd rule,line width=0.800pt]
  (99.5000,922.3622) .. controls (99.8536,930.8475) and (93.1360,929.6091) ..
  (93.2500,939.8622)  node[above=0.15cm,at start] {$\scriptstyle H$};
\path[draw=black,line join=miter,line cap=butt,even odd rule,line width=0.800pt]
  (87.1036,989.2795) .. controls (86.0104,974.0294) and (86.3640,959.5338) ..
  (86.7500,944.8622);
\path[draw=black,line join=miter,line cap=butt,even odd rule,line width=0.800pt]
  (120.0000,1032.3622) .. controls (120.0000,1015.0381) and (93.2825,1009.8535)
  .. (93.0429,995.0043) (120.0000,922.3622) .. controls (119.9180,931.7781) and
  (115.2634,941.9923) .. (109.8332,951.8389)   node[above=0.15cm,at start] {$\scriptstyle H$}(107.3183,956.2965) .. controls
  (100.0698,968.9112) and (92.4856,980.6059) ..
  (92.8964,988.8224)(160.0000,1032.3622) .. controls (160.0000,1013.0884) and
  (162.8870,993.5084) .. (152.1649,973.3740) .. controls (142.5218,955.2658) and
  (101.9291,959.1115) .. (93.2500,944.8622)(160.0000,922.3622) .. controls
  (160.5458,936.3702) and (154.7665,948.7987) ..
  (149.2677,965.9454)   node[above=0.15cm,at start] {$\scriptstyle B$}(147.6286,971.2526) .. controls (143.0957,986.5611) and
  (139.2331,1005.6727) .. (140.0000,1032.3622) (140.0000,922.3622) .. controls
  (140.2460,929.6188) and (135.4801,942.3575) ..
  (129.1596,956.8534)   node[above=0.15cm,at start] {$\scriptstyle A$}(127.1187,961.4791) .. controls (120.0558,977.3210) and
  (111.6326,994.7224) .. (106.0683,1009.1376)(104.2396,1014.0670) .. controls
  (101.6332,1021.4237) and (100.0000,1027.7591) ..
  (100.0000,1032.3622);
\path[draw=black,line join=miter,line cap=butt,even odd rule,line width=0.800pt]
  (80.0000,1032.3622) .. controls (79.6464,1012.9168) and (86.5784,1011.1006) ..
  (87.5251,994.8371);
\path[fill=black] (89.0762,941.7743) node[circle, draw, line width=0.65pt,
  minimum width=4.5mm, fill=white, inner sep=0.25mm] (text4238) {$\scriptstyle v$};
\path[fill=black] (90.7404,991.5806) node[circle, draw, line width=0.65pt,
  minimum width=4.5mm, fill=white, inner sep=0.25mm] (text4238-3) {$\scriptstyle v$};
\end{tikzpicture}}} \qquad \quad \text{and} \quad \qquad \vcenter{\hbox{
\begin{tikzpicture}[y=0.70pt, x=0.70pt, yscale=-1.000000, xscale=1.000000, inner sep=0pt, outer sep=0pt]
\path[draw=black,line join=miter,line cap=butt,even odd rule,line width=0.800pt]
  (369.4454,922.5112) -- (369.4454,1032.5112)   node[above=0.15cm,at start] {$\scriptstyle D$};
\path[draw=black,line join=miter,line cap=butt,even odd rule,line width=0.800pt]
  (249.4454,922.5112) .. controls (249.7990,947.9670) and (258.9914,950.9445) ..
  (256.8701,969.3293)   node[above=0.15cm,at start] {$\scriptstyle H$};
\path[draw=black,line join=miter,line cap=butt,even odd rule,line width=0.800pt]
  (262.7279,969.6827) .. controls (262.7279,956.6013) and (276.8701,955.1534) ..
  (276.4026,943.8396);
\path[draw=black,line join=miter,line cap=butt,even odd rule,line width=0.800pt]
  (256.2635,973.9254) -- (249.4454,1032.5112);
\path[draw=black,line join=miter,line cap=butt,even odd rule,line width=0.800pt]
  (297.2236,999.3292) .. controls (298.2843,987.2078) and (274.6482,991.1490) ..
  (262.6274,974.1784)(303.6881,999.6827) .. controls (304.0416,982.3587) and
  (302.1734,960.8101) .. (281.4277,944.1931)(309.4454,922.5112) .. controls
  (310.0516,939.5803) and (303.1707,946.7184) ..
  (294.8843,954.7341)   node[above=0.15cm,at start] {$\scriptstyle A$}(291.2525,958.2668) .. controls (288.7652,960.7323) and
  (286.2329,963.4026) .. (283.8046,966.5423) .. controls (279.9777,971.4903) and
  (277.2233,976.5767) .. (275.2302,981.8016)(273.6151,986.6038) .. controls
  (269.4163,1000.9012) and (270.0525,1016.2030) .. (269.4454,1032.5112);
\path[draw=black,line join=miter,line cap=butt,even odd rule,line width=0.800pt]
  (297.6920,1004.9860) .. controls (298.2843,1017.2078) and (289.4454,1015.8942)
  .. (289.4454,1032.5112);
\path[draw=black,line join=miter,line cap=butt,even odd rule,line width=0.800pt]
  (303.3488,1006.0467) .. controls (303.3488,1018.7746) and (329.7990,1018.7226)
  .. (329.4454,1032.5112)(329.4454,922.5111) .. controls (330.0044,938.1629) and
  (327.1235,989.4160) .. (318.0320,1016.8382)   node[above=0.15cm,at start] {$\scriptstyle B$}(316.3025,1021.5665) .. controls
  (314.2904,1026.5267) and (312.0126,1030.3434) .. (309.4454,1032.5112);
\path[draw=black,line join=miter,line cap=butt,even odd rule,line width=0.800pt]
  (349.4454,922.5112) -- (349.4454,1032.5112)   node[above=0.15cm,at start] {$\scriptstyle C$};
\path[draw=black,line join=miter,line cap=butt,even odd rule,line width=0.800pt]
  (269.4454,922.5112) .. controls (270.1526,932.7642) and (276.1630,929.7581) ..
  (275.6954,940.0112)   node[above=0.15cm,at start] {$\scriptstyle H$};
\path[draw=black,line join=miter,line cap=butt,even odd rule,line width=0.800pt]
  (288.9454,922.5112) .. controls (288.9454,929.2287) and (282.2279,928.6974) ..
  (282.6954,940.0112)   node[above=0.15cm,at start] {$\scriptstyle H$};
\path[fill=black] (278.7716,941.1231) node[circle, draw, line width=0.65pt,
  minimum width=4.5mm, fill=white, inner sep=0.25mm] (text4238-3-9) {$\scriptstyle v$};
\path[fill=black] (259.5928,971.5634) node[circle, draw, line width=0.65pt,
  minimum width=4.5mm, fill=white, inner sep=0.25mm] (text4238-3-8) {$\scriptstyle v$};
\path[fill=black] (299.5443,1001.6154) node[circle, draw, line width=0.65pt,
  minimum width=4.5mm, fill=white, inner sep=0.25mm] (text4238-3-3) {$\scriptstyle v$};
\end{tikzpicture}}} \rlap{\ \ \  .}
\end{equation}
By examining the strings, a \emph{sufficient} condition for this
equality to hold is the equality of the two maps $HHH \rightrightarrows HHH$
represented by the string diagrams
\begin{equation}\label{eq:18}
\vcenter{\hbox{  \begin{tikzpicture}[y=0.70pt, x=0.70pt, yscale=-1.000000, xscale=1.000000, inner sep=0pt, outer sep=0pt]
\path[draw=black,line join=miter,line cap=butt,even odd rule,line width=0.800pt]
  (270.0000,852.3622) .. controls (270.2500,860.1122) and (262.2500,861.4872) ..
  (262.2500,868.9872);
\path[draw=black,line join=miter,line cap=butt,even odd rule,line width=0.800pt]
  (290.0000,852.3622) .. controls (290.0000,870.7640) and (285.7407,884.6419) ..
  (280.4352,895.2374)(277.6012,900.4535) .. controls (272.0268,909.9543) and
  (265.9621,916.3844) .. (262.6250,920.9872)(262.2500,873.6122) .. controls
  (283.3182,899.5898) and (290.6435,918.7024) .. (290.0000,942.3622);
\path[draw=black,line join=miter,line cap=butt,even odd rule,line width=0.800pt]
  (250.0000,852.3622) .. controls (250.7500,861.6122) and (258.6250,862.3622) ..
  (258.8750,869.3622);
\path[draw=black,line join=miter,line cap=butt,even odd rule,line width=0.800pt]
  (258.2500,921.1122) .. controls (259.0000,904.6122) and (251.6903,910.1948) ..
  (251.6099,896.6234) .. controls (251.5295,883.0520) and (258.9772,884.6592) ..
  (258.6250,873.2372);
\path[draw=black,line join=miter,line cap=butt,even odd rule,line width=0.800pt]
  (250.0000,942.3622) .. controls (250.2500,933.8622) and (258.5000,932.8622) ..
  (258.5000,925.6122);
\path[draw=black,line join=miter,line cap=butt,even odd rule,line width=0.800pt]
  (262.0000,925.3622) .. controls (262.5000,933.3622) and (270.0000,933.1122) ..
  (270.0000,942.3622);
\path[fill=black] (260.1172,923.1932) node[circle, draw, line width=0.65pt,
  minimum width=4.5mm, fill=white, inner sep=0.25mm] (text4238-3-3-4) {$\scriptstyle v$};
\path[fill=black] (259.7422,871.4432) node[circle, draw, line width=0.65pt,
  minimum width=4.5mm, fill=white, inner sep=0.25mm] (text4238-3-9-0) {$\scriptstyle v$};
\end{tikzpicture}}} \qquad \quad \text{and} \qquad \quad
\vcenter{\hbox{\begin{tikzpicture}[y=0.70pt, x=0.70pt, yscale=-1.000000, xscale=1.000000, inner sep=0pt, outer sep=0pt]
\path[draw=black,line join=miter,line cap=butt,even odd rule,line width=0.800pt]
  (170.0000,942.3622) .. controls (170.2500,933.8622) and (178.5000,932.8622) ..
  (178.5000,925.6122);
\path[draw=black,line join=miter,line cap=butt,even odd rule,line width=0.800pt]
  (182.0000,925.3622) .. controls (182.5000,933.3622) and (190.0000,933.1122) ..
  (190.0000,942.3622);
\path[draw=black,line join=miter,line cap=butt,even odd rule,line width=0.800pt]
  (150.0000,852.3622) .. controls (150.3536,877.8180) and (159.7959,876.7955) ..
  (157.6746,895.1803);
\path[draw=black,line join=miter,line cap=butt,even odd rule,line width=0.800pt]
  (162.6575,895.2837) .. controls (162.6575,882.2023) and (178.1746,883.2544) ..
  (177.7071,871.9406);
\path[draw=black,line join=miter,line cap=butt,even odd rule,line width=0.800pt]
  (157.6930,899.5264) .. controls (158.1930,915.2764) and (150.2500,923.1122) ..
  (150.0000,942.3622);
\path[draw=black,line join=miter,line cap=butt,even odd rule,line width=0.800pt]
  (178.5282,921.0552) .. controls (179.5888,908.9338) and (174.5778,916.7500) ..
  (162.5570,899.7794);
\path[draw=black,line join=miter,line cap=butt,even odd rule,line width=0.800pt]
  (190.0000,852.3622) .. controls (190.2500,860.1122) and (182.5000,861.3622) ..
  (182.5000,868.8622);
\path[draw=black,line join=miter,line cap=butt,even odd rule,line width=0.800pt]
  (170.0000,852.3622) .. controls (170.7500,861.6122) and (177.7500,861.6122) ..
  (178.0000,868.6122);
\path[draw=black,line join=miter,line cap=butt,even odd rule,line width=0.800pt]
  (182.7349,920.9760) .. controls (181.9849,904.4760) and (189.6696,908.1836) ..
  (189.7500,894.6122) .. controls (189.8304,881.0408) and (182.5077,883.2729) ..
  (182.8599,871.8510);
\path[fill=black] (180.0762,870.5991) node[circle, draw, line width=0.65pt,
  minimum width=4.5mm, fill=white, inner sep=0.25mm] (text4238-3-9) {$\scriptstyle v$};
\path[fill=black] (160.2723,897.5394) node[circle, draw, line width=0.65pt,
  minimum width=4.5mm, fill=white, inner sep=0.25mm] (text4238-3-8-2) {$\scriptstyle v$};
\path[fill=black] (180.5989,923.4664) node[circle, draw, line width=0.65pt,
  minimum width=4.5mm, fill=white, inner sep=0.25mm] (text4238-3-3) {$\scriptstyle v$};
\end{tikzpicture}}} \rlap{\ \ \ ;}
\end{equation}
and taking $A = B = C = D = I$ in~\eqref{eq:10} shows
that this sufficient condition is also \emph{necessary}. In fact, the
structure of a map $v \colon HH \rightarrow HH$ rendering equal the
strings in~\eqref{eq:18} has been studied before:
\begin{Defn}
  \label{def:9}\cite{Street1998Fusion}
  Let $(\C, \otimes, I)$ be a symmetric monoidal category. A
  \emph{tricocycloid} in $\C$ comprises an object $H \in \C$ and an
  invertible map $v \colon H \otimes H \rightarrow H \otimes H$
  satisfying the equality
  \begin{equation}\label{eq:49}
    (v \otimes 1)(1 \otimes \sigma)(v \otimes 1) = (1 \otimes v)(v
    \otimes 1)(1 \otimes v) \colon H \otimes H \otimes H \rightarrow H
    \otimes H \otimes H\rlap{ .}
  \end{equation}
\end{Defn}
The preceding argument shows:
\begin{Prop}
  \label{prop:4}
  Let $(\C, \otimes, I)$ be a symmetric monoidal category with finite
  distributive coproducts, let $H \in \C$ and
  let $v \colon H \otimes H \rightarrow H \otimes H$. The pair $(H,v)$ is a
  tricocycloid if and only if there is a monoidal structure
  $(\star, 0)$ with $\star$ as in~\eqref{eq:19}, and with unit and
  associativity constraints as in~\eqref{eq:17} and~\eqref{eq:16}.
\end{Prop}

We can think of the object $H$ underlying a tricocycloid as
parametrising ``ways of non-trivially combining two things''; the map
$v$ then compares two ways in which $H \otimes H$ could
parametrise ``ways of non-trivially combining three things''. This
intuition may be clarified in terms of the notion of \emph{operad}.
Operads were introduced by May in~\cite{May1972The-geometry} as a tool
for describing certain kinds of topological-algebraic theory arising
in homotopy theory, and involve objects of ``$n$-ary operations'' for
each $n$, with suitable composition laws. The following notion of
\emph{pseudo-operad}, due to Markl, is concerned with the case where
the objects of nullary and unary operations are trivial, and so can be omitted.

\begin{Defn}
  \label{def:26}
  A \emph{pseudo-operad}~\cite{Markl1996Models} in a
  symmetric monoidal category $\C$ is a sequence
  $(H_n)_{n \geqslant 2}$ of objects endowed with maps
  \begin{equation}\label{eq:11}
    \circ_i \colon H_n \otimes H_m \rightarrow H_{n+m-1} \qquad \text{for $n,m\geqslant1$ and $1 \leqslant i \leqslant n$}
  \end{equation}
  rendering commutative the following diagrams for $n,m,k \geqslant
  1$ and $1 \leqslant i < j \leqslant n$:
  \begin{equation*}
    \cd{
      H_n \otimes H_m \otimes H_k \ar[d]_-{\circ_i \otimes 1} \ar[r]^-{1 \otimes \sigma} & H_n
      \otimes H_k \otimes H_m \ar[r]^-{\mathord{\circ_j} \otimes 1}  &
      H_{n+k-1} \otimes H_m \ar[d]^-{\circ_i} \\
      H_{n+m-1} \otimes H_k \ar[rr]^-{\circ_{j+m-1}} & & H_{n+m+k-2}
    }
  \end{equation*}
  and the following diagrams for $n,m,k \geqslant 1$ and $1
  \leqslant i \leqslant n$ and $1 \leqslant j \leqslant m$:
  \begin{equation*}
    \cd{
      H_n \otimes H_m \otimes H_k \ar[d]_-{\circ_i \otimes 1}
      \ar[r]^-{1 \otimes \mathord{\circ_j}}  &
      H_n \otimes H_{m+k-1} \ar[d]^-{\circ_i} \\
      H_{n+m-1} \otimes H_k \ar[r]^-{\circ_{j+i-1}} & H_{n+m+k-2}\rlap{ .}
    }
  \end{equation*}
\end{Defn}
We think of the objects $H_n$ involved in a pseudo-operad as
parametrising ``ways of non-trivially combining $n$ things''; the maps
$\circ_i$ then describe the way of combining $n+m-1$ things induced by
a way of combining $n$ things and a way of combining $m$ things,
according to the following schema:
\begin{equation*}
  \vcenter{\hbox{
    \begin{tikzpicture}[scale=0.75, line width=0.5pt]
    \draw (0,1) -- ++ (0:1) -- ++ (240:1) -- ++ (120:1);
    \draw (0,1) ++ (120:-1) -- ++ (270:0.5);
    \draw (0.2,1) -- ++ (90:0.5);
    \draw (0.8,1) -- ++ (90:0.5);
    \node (d) at (0.5,1.25) {$\scriptstyle\dots$};
    \node (a) at (0.5,0.7) {$\scriptstyle\alpha$};
  \end{tikzpicture}
  }} \ \otimes \
  \vcenter{\hbox{
  \begin{tikzpicture}[scale=0.75, line width=0.5pt]
    \draw (0,1) -- ++ (0:1) -- ++ (240:1) -- ++ (120:1);
    \draw (0,1) ++ (120:-1) -- ++ (270:0.5);
    \draw (0.2,1) -- ++ (90:0.5);
    \draw (0.8,1) -- ++ (90:0.5);
    \node (d) at (0.5,1.25) {$\scriptstyle\dots$};
    \node (a) at (0.5,0.7) {$\scriptstyle\beta$};
  \end{tikzpicture}
  }}
  \qquad \mapsto \qquad 
  \vcenter{\hbox{
  \begin{tikzpicture}[scale=0.75,line width=0.5pt]
    \draw (0,1) -- ++ (0:1) -- ++ (240:1) -- ++ (120:1);
    \draw (0,1) ++ (120:-1) -- ++ (270:0.5);
    \draw (0.2,1) .. controls ++(-0.2,0.3) and ++(0,-0.5) .. ++ (-0.4,1.86);
    \draw (0.5,1) node[above right=-2.5pt] {$\scriptscriptstyle i$} -- ++
    (90:0.5) -- ++ (120:1) -- ++ (0:1) -- ++ (240:1);
    \draw (0.8,1) .. controls ++(0.2,0.3) and ++(0,-0.5) .. ++ (0.4,1.86);
    \node (d) at (0.33,1.3) {$\scriptscriptstyle\dots$};
    \node (e) at (0.72,1.3) {$\scriptscriptstyle\dots$};
    \draw (0.5,1.5) ++ (120:1) ++ (0.2,0) -- ++ (90:0.5);
    \draw (0.5,1.5) ++ (120:1) ++ (0.8,0) -- ++ (90:0.5);
    \node (f) at (0.5,2.6) {$\scriptstyle\dots$};
    \node (g) at (0.02,2.6) {$\scriptstyle\dots$};
    \node (h) at (1.02,2.6) {$\scriptstyle\dots$};
    \node (a) at (0.5,0.65) {$\scriptstyle\alpha$};
    \node (b) at (0.5,2.05) {$\scriptstyle\beta$};
  \end{tikzpicture}
  }} \rlap{ .}
\end{equation*}

In general, there is no reason to expect the objects $H_n$
parametrising $n$-ary combinations for $n \geqslant 3$ to be
determined by the object $H_2$ of binary combinations; but when this
\emph{is} the case, we get a tricocycloid. This idea dates back to
Day~\cite{Day1970On-closed}, is explained in detail in the
introduction of~\cite{Day2003Lax-monoids}, and is made precise by:
\begin{Lemma}
  \label{lem:10}
  To give a tricocycloid in a symmetric monoidal category $\C$ is
  equally to give a pseudo-operad for which the maps~\eqref{eq:11} are
  all invertible.
\end{Lemma}
\begin{proof}[Proof (sketch)]
  From a pseudo-operad $H$ we obtain a
  tricocycloid with underlying object $H_2$ and with
  \begin{equation}\label{eq:48}
    v = H_2 \otimes H_2 \xrightarrow{\circ_1} H_3
    \xrightarrow{(\circ_2)^{-1}} H_2 \otimes H_2\rlap{ .}
  \end{equation}
  The tricocycloid axiom follows by constructing a suitable
  commutative diagram relating the various composition operations $H_2
  \otimes H_2 \otimes H_2 \rightarrow H_4$, and using invertibility of
  the maps $\circ_i$. Conversely, from a tricocycloid $(H,v)$ we
  construct a pseudo-operad with $H_n = H^{\otimes(n-1)}$, and with
  the maps $\circ_i$ given by suitable composites of $v$ which we will
  not spell out in general; but let us at least say that, in the lowest dimension, we have
  $\circ_1, \circ_2 \colon H_2 \otimes H_2 \rightarrow H_3$ given by
  $v, \id \colon H \otimes H \rightarrow H \otimes H$
\end{proof}


We now describe, following~\cite[Section~4]{Street1998Fusion}, the
additional structure on a tricocycloid needed to induce a symmetry on
the associated monoidal structure. 
Such a symmetry is given by coherent isomorphisms
$\sigma_{AB} \colon A \star B \rightarrow B \star A$, i.e., maps
$A + HAB + B \rightarrow B + HBA + A$, and the coherence axiom
relating $\sigma$ with the unit constraints force the $A$- and
$B$-summands of the domain to be mapped to the corresponding summands
of the codomain. Like before, it is now natural to map the remaining
$HAB$-summand to the $HBA$-summand via a composite
\begin{equation}\label{eq:20}
  HAB \xrightarrow{\gamma 11} HAB \xrightarrow{1\sigma} HBA
\end{equation}
for some fixed map $\gamma \colon H \rightarrow H$. Since a symmetry
must satisfy $\sigma_{BA} \circ \sigma_{AB} = 1$, it follows that
$\gamma$ must be an involution (i.e., $\gamma^2 = 1$). As for the
hexagon axiom relating the symmetry to the associativity, its only
non-trivial case expresses the equality of the maps
$HHABC \rightrightarrows HAHBC$ given by the respective diagrams:
\begin{equation}\label{eq:21}
\vcenter{\hbox{\begin{tikzpicture}[y=0.60pt, x=0.70pt, yscale=-1.000000, xscale=1.000000, inner sep=0pt, outer sep=0pt]
\path[draw=black,line join=miter,line cap=butt,even odd rule,line width=0.800pt]
  (40.0000,902.3622) -- (40.0000,916.8370) node[above=0.15cm,at start] {$\scriptstyle H$};
\path[draw=black,line join=miter,line cap=butt,even odd rule,line width=0.800pt]
  (60.0000,902.3622) .. controls (59.7392,922.4255) and (80.2733,942.5041) ..
  (80.0000,962.3622) node[above=0.15cm,at start] {$\scriptstyle A$} .. controls (79.7218,982.5797) and (80.0000,980.1709) ..
  (80.0000,1007.3916) .. controls (80.0000,1020.8243) and (100.3535,1037.2807)
  .. (100.0000,1050.3622);
\path[draw=black,line join=miter,line cap=butt,even odd rule,line width=0.800pt]
  (80.0000,902.3622) .. controls (80.7071,922.2706) and (60.2376,942.9284) ..
  (60.0000,962.3622) node[above=0.15cm,at start] {$\scriptstyle B$} .. controls (59.6439,991.4858) and (39.1186,966.8757) ..
  (40.0000,1050.3622);
\path[draw=black,line join=miter,line cap=butt,even odd rule,line width=0.800pt]
  (100.0000,902.3622) -- (100.0000,1008.0987) node[above=0.15cm,at
  start] {$\scriptstyle C$} .. controls (100.0000,1024.3598)
  and (79.6464,1034.4523) .. (80.0000,1050.3622);
\path[draw=black,line join=miter,line cap=butt,even odd rule,line width=0.800pt]
  (20.0000,902.3622) .. controls (20.0000,935.4837) and (27.4246,924.8762) ..
  (27.4246,956.7782) node[above=0.15cm,at start] {$\scriptstyle H$};
\path[draw=black,line join=miter,line cap=butt,even odd rule,line width=0.800pt]
  (31.7678,958.5944) .. controls (31.7678,941.7529) and (40.0000,951.1460) ..
  (40.0000,921.5338);
\path[draw=black,line join=miter,line cap=butt,even odd rule,line width=0.800pt]
  (32.1734,960.9340) .. controls (32.1734,990.0388) and (60.0000,976.9292) ..
  (60.0000,1008.3622);
\path[draw=black,line join=miter,line cap=butt,even odd rule,line width=0.800pt]
  (26.5165,960.2269) .. controls (26.5165,985.2222) and (20.0000,1008.8591) ..
  (20.0000,1050.3622);
\path[draw=black,line join=miter,line cap=butt,even odd rule,line width=0.800pt]
  (60.0000,1012.3622) -- (60.0000,1050.3622);
\path[fill=black] (29.3941,959.0660) node[circle, draw, line width=0.65pt,
  minimum width=4.5mm, fill=white, inner sep=0.25mm] (text4238-5) {$\scriptstyle v$};
\path[fill=black] (40.4390,919.3124) node[circle, draw, line width=0.65pt,
  minimum width=4.5mm, fill=white, inner sep=0.25mm] (text4238-1) {$\scriptstyle \gamma$};
\path[fill=black] (60.2380,1010.2971) node[circle, draw, line width=0.65pt,
  minimum width=4.5mm, fill=white, inner sep=0.25mm] (text4238-0) {$\scriptstyle \gamma$};
\end{tikzpicture}}} \quad \qquad \text{and} \qquad \quad
\vcenter{\hbox{\begin{tikzpicture}[y=0.60pt, x=0.70pt, yscale=-1.000000, xscale=1.000000, inner sep=0pt, outer sep=0pt]
\path[draw=black,line join=miter,line cap=butt,even odd rule,line width=0.800pt]
  (140.0000,902.3622) .. controls (140.0000,912.6396) and (147.4318,909.8802) ..
  (147.4318,919.4261) node[above=0.15cm,at start] {$\scriptstyle H$};
\path[draw=black,line join=miter,line cap=butt,even odd rule,line width=0.800pt]
  (152.3815,919.4261) .. controls (152.3815,910.9408) and (160.0000,910.5016) ..
  (160.0000,902.3622) node[above=0.15cm,at end] {$\scriptstyle H$};
\path[draw=black,line join=miter,line cap=butt,even odd rule,line width=0.800pt]
  (147.0782,925.0830) .. controls (143.1891,943.1843) and (140.0000,944.4880) ..
  (140.0000,966.3622);
\path[draw=black,line join=miter,line cap=butt,even odd rule,line width=0.800pt]
  (152.2218,1017.7972) .. controls (152.2218,986.5607) and (179.7231,1003.7102)
  .. (180.0000,974.3622) .. controls (180.3536,936.8855) and (152.7351,957.0282)
  .. (152.7351,925.0830);
\path[draw=black,line join=miter,line cap=butt,even odd rule,line width=0.800pt]
  (179.9587,902.8091) .. controls (179.9587,916.6700) and (179.6257,912.0628) ..
  (180.0000,922.3622) node[above=0.15cm,at start] {$\scriptstyle A$} .. controls (180.9878,949.5430) and (159.6464,937.0944) ..
  (159.6464,974.2409) .. controls (159.6464,1011.3874) and (220.0000,987.2251)
  .. (220.0000,1016.3622) .. controls (220.0000,1045.4993) and
  (220.0000,1027.6906) .. (220.0000,1050.3622);
\path[draw=black,line join=miter,line cap=butt,even odd rule,line width=0.800pt]
  (140.0000,970.3622) .. controls (138.5303,993.5646) and (146.4645,1012.8469)
  .. (147.1716,1017.8977);
\path[draw=black,line join=miter,line cap=butt,even odd rule,line width=0.800pt]
  (200.0000,902.3622) -- (200.0000,984.3622) node[above=0.15cm,at
  start] {$\scriptstyle B$} .. controls (200.0000,1005.4795)
  and (160.3535,993.9982) .. (160.0000,1050.3622);
\path[draw=black,line join=miter,line cap=butt,even odd rule,line width=0.800pt]
  (220.0000,902.3622) -- (220.0000,984.3622) node[above=0.15cm,at
  start] {$\scriptstyle C$} .. controls (220.0000,1002.0398)
  and (200.0000,1004.0884) .. (200.0000,1026.3622) -- (200.0000,1050.3622);
\path[draw=black,line join=miter,line cap=butt,even odd rule,line width=0.800pt]
  (147.4318,1021.8873) .. controls (147.4318,1041.1332) and (140.0000,1031.8326)
  .. (140.0000,1050.3622);
\path[draw=black,line join=miter,line cap=butt,even odd rule,line width=0.800pt]
  (152.3815,1021.5338) .. controls (154.7678,1044.4492) and (180.7071,1030.4627)
  .. (180.0000,1050.3622);
\path[fill=black] (149.4818,1020.0101) node[circle, draw, line width=0.65pt,
  minimum width=4.5mm, fill=white, inner sep=0.25mm] (text4238-4) {$\scriptstyle v$};
\path[fill=black] (139.7875,967.8705) node[circle, draw, line width=0.65pt,
  minimum width=4.5mm, fill=white, inner sep=0.25mm] (text4238-7) {$\scriptstyle \gamma$};
\path[fill=black] (149.9558,921.9428) node[circle, draw, line width=0.65pt,
  minimum width=4.5mm, fill=white, inner sep=0.25mm] (text4238-4-7)
  {$\scriptstyle v$};
\end{tikzpicture}}}\rlap{\ \ \ \ .}
\end{equation}
Like before, it is necessary and sufficient for this that we should
have equality of the diagrams obtained from~\eqref{eq:21} by deleting
the $A$-, $B$- and $C$-strings; we encapsulate this requirement in:
\begin{Defn}
  \label{def:10}
  Let $(H,v)$ be a tricocycloid in the symmetric monoidal category $(\C,
  \otimes, I)$. A \emph{symmetry} for $H$ is an involution $\gamma
  \colon H \rightarrow H$ satisfying the equality
  \begin{equation}\label{eq:24}
    (1 \otimes \gamma)v(1 \otimes \gamma) = v(\gamma \otimes 1) v
    \colon H \otimes H \rightarrow H \otimes H\rlap{ .}
  \end{equation}
\end{Defn}
The preceding argument thus shows:
\begin{Prop}
  \label{prop:7}
  Let $(\C, \otimes, I)$ be a symmetric monoidal category with finite
  distributive coproducts, and let $(H,v)$ be a tricocycloid in $\C$.
  An involution $\gamma \colon H \rightarrow H$ is a symmetry for
  $(H,v)$ just when the maps
  $\sigma_{AB} \colon A \star B \rightarrow B \star A$ determined
  by~\eqref{eq:20} endow the associated monoidal structure
  $(\star, 0)$ on $\C$ with a symmetry.
\end{Prop}
A symmetry on a tricocycloid can also be described via the
corresponding pseudo-operad. We call a pseudo-operad $H$ in $\C$
\emph{symmetric} if each $H_n$ carries a symmetric group action
$\alpha \colon S_n \rightarrow \C(H_n, H_n)$, with respect to which
composition is equivariant. It is now straightforward to show that
giving a symmetric tricocycloid is the same as giving a symmetric
pseudo-operad with all maps~\eqref{eq:11} invertible.

\subsection{The convex monoidal structure}
\label{sec:convex-tricycloid}

Using the preceding theory, we can obtain the associativity and
symmetry constraints of the desired convex monoidal structure $A,B
\mapsto A + (0,1) \times A \times B + B$ on
$\cat{Set}$ by endowing the set $(0,1)$ with the structure of a
symmetric tricocycloid.

This tricycloid is most easily understood by deriving it from a
symmetric pseudo-operad. Indeed, for each $n \geqslant 2$ we may
consider the set
\begin{equation*}
  H_n = \{(r_1, \dots, r_n) \in (0,1)^n : r_1 + \dots + r_n = 1\}\rlap{ .}
\end{equation*}
We now have maps $\circ_i \colon H_n \times H_m \rightarrow H_{n+m-1}
$ defined by
\begin{equation}\label{eq:44}
  \bigl((r_1, \dots, r_n), (s_1, \dots, s_m)\bigr) \mapsto (r_1, \dots, r_{i-1},
  r_is_1,
  \dots, r_i s_m, r_{i+1}, \dots, r_n)
\end{equation}
and maps $\sigma \colon S_n \times H_n \rightarrow H_n$ defined by
\begin{equation}\label{eq:45}
 \bigl(g, (r_1, \dots, r_n)\bigr) \mapsto (r_{g(1)}, \dots,
r_{g(n)})
\end{equation}
which easily satisfy the axioms for a symmetric
pseudo-operad.
Moreover, each of the maps $\circ_i$ is invertible,
with inverse
\begin{equation}\label{eq:47}
  (t_1, \dots, t_{n+m-1}) \mapsto \bigl((t_1, \dots, t_{i-1},
  u, t_{i+m}, \dots, t_{n+m-1}), (\tfrac{t_i}{u}, \dots, \tfrac{t_{i+m-1}}{u})\bigr)
\end{equation}
where here $u \defeq \sum_{j=i}^{i+m-1} t_j$. So by
Lemma~\ref{lem:10}, $H_2$ is a symmetric tricocycloid; transporting
this structure along the isomorphism $H_2 \cong (0,1)$ given by
$(r,s) \mapsto r$, we conclude that $(0,1)$ is a symmetric
tricocycloid. The following result spells the structure out, and gives
a direct proof of the symmetric tricocycloid axioms.

\begin{Prop}
  \label{prop:5}
  In the cartesian monoidal category of sets, $(0,1)$ is a
  symmetric tricocycloid, which we term the \emph{convex tricocycloid}, under the
  operations
  \begin{align*}
    v \colon (0,1)^2 & \mapsto (0,1)^2  & \gamma \colon (0,1) &
    \mapsto (0,1) \\
    (r,s) & \mapsto (rs, \tfrac{r \cdot s^\ast}{(rs)^\ast}) & r &
    \mapsto r^\ast\rlap{ ,}
  \end{align*}
  where, as before, we write $r^\ast = 1-r$ for any $r \in (0,1)$.
\end{Prop}
\begin{proof}
  We begin by checking that $\bigl((0,1), v\bigr)$ is a tricocycloid.
  It is easy arithmetic to see that $rs$ and
  $r \cdot s^\ast / (rs)^\ast$ are in $(0,1)$ whenever $r$ and $s$
  are, so that $v$ is well-defined. For the tricocycloid axiom, the
  function
  $(v \times 1)(1 \times \sigma)(v \times 1) \colon (0,1)^3
  \rightarrow (0,1)^3$ is given by
  \begin{equation*}
    (r,s,t) \mapsto (rs, \tfrac{r\cdot s^\ast}{(rs)^\ast}, t)
    \mapsto (rs, t, \tfrac{r\cdot s^\ast}{(rs)^\ast})
    \mapsto (rst, \tfrac{rs\cdot t^\ast}{(rst)^\ast}, \tfrac{r\cdot s^\ast}{(rs)^\ast})
    \rlap{ ,}
  \end{equation*}
  while the map $(1 \times v)(v \times 1)(1 \times v) \colon (0,1)^3
  \rightarrow (0,1)^3$ is given by
  \begin{equation*}
    (r,s,t) \mapsto (r, st, \tfrac{s \cdot t^\ast}{(st)^\ast})
    \mapsto (rst, \tfrac{r(st)^\ast}{(rst)^\ast},\tfrac{s \cdot t^\ast}{(st)^\ast})
    \mapsto \Big(\,rst, \tfrac{rs \cdot
      t^\ast}{(rst)^\ast},\left(\!\tfrac{r(st)^\ast}{(rst)^\ast}\!\right)\!
      \left(\!\tfrac{s \cdot t^\ast}{(st)^\ast} \!\right)^\ast\!\! \Big/\! \left(\!\tfrac{rs \cdot t^\ast}{(rst)^\ast}\!\right)^\ast\Big)\text.
  \end{equation*}
  To see that the final terms agree, note first that for any
  $a,b \in (0,1)$ we have
  \begin{equation}\label{eq:22}
    \left(\!\tfrac{a \cdot b^\ast}{(ab)^\ast}\!\right)^\ast = 1 -
    \tfrac{a - ab}{1 - ab} = \tfrac{1-a}{1 - ab} = \tfrac{a^\ast}{(ab)^\ast}
  \end{equation}
  so that the desired equality follows from the calculation
  \begin{equation*}
\left(\!\tfrac{r(st)^\ast}{(rst)^\ast}\!\right)\!
      \left(\!\tfrac{s \cdot t^\ast}{(st)^\ast}
        \!\right)^\ast \! \Big/\! \left(\!\tfrac{rs \cdot
          t^\ast}{(rst)^\ast}\!\right)^\ast \ = \    \left(\!\tfrac{r(st)^\ast}{(rst)^\ast}\!\right)\!
      \left(\!\tfrac{s^\ast}{(st)^\ast}\!\right) \! \Big/\! \left(\!\tfrac{(rs)^\ast}{(rst)^\ast}\!\right)
    \ = \ \tfrac{r \cdot s^\ast}{(rs)^\ast}\rlap{ .}
  \end{equation*}
  We note also that $v$ is invertible, with inverse
  $v^{-1}(t,u) = \bigl((t^\ast \cdot u^\ast)^\ast, \tfrac{(t^\ast \cdot
    u)^\ast}{t^\ast \cdot u^\ast}\bigr)$; that $v \circ v^{-1}$ and $v^{-1}
  \circ v$ are identities follows by a short calculation
  using~\eqref{eq:22}.

  We now show that $\gamma$ provides a symmetry for the tricocycloid
  $\bigl((0,1), v\bigr)$. Clearly $\gamma$ is an involution, so it
  remains to check the coherence axiom. The map
  $(1 \times \gamma)v(1 \times \gamma) \colon (0,1)^2 \rightarrow
  (0,1)^2$ is given by
  \begin{equation*}
    (r,s) \mapsto (r, s^\ast) \mapsto (r\cdot s^\ast, \tfrac{r \cdot s^{\ast\ast}}{(r
      \cdot s^\ast)^\ast}) \mapsto \Big(r\cdot s^\ast, \left(\!\tfrac{r \cdot s^{\ast\ast}}{(r
      \cdot s^\ast)^\ast}\!\right)^\ast\Big)
  \end{equation*}
  while $v(\gamma \times 1)v \colon (0,1)^2 \rightarrow (0,1)^2$ is
  given by
  \begin{equation*}
    (r,s) \mapsto (rs, \tfrac{r \cdot s^\ast}{(rs)^\ast}) \mapsto
((rs)^\ast, \tfrac{r \cdot s^\ast}{(rs)^\ast}) \mapsto \Big(r \cdot
s^\ast, (rs)^\ast \left(\!\tfrac{r \cdot
    s^\ast}{(rs)^\ast}\!\right)^\ast \!\!\Big/ (r \cdot s^\ast)^\ast \Big)\rlap{ .}
  \end{equation*}
  To check the equality of the second terms, we calculate
  using~\eqref{eq:22} twice that:
  \begin{equation*}
    (rs)^\ast \left(\!\tfrac{r \cdot
        s^\ast}{(rs)^\ast}\!\right)^\ast \!\!\Big/ (r \cdot s^\ast)^\ast =
    (rs)^\ast \tfrac{r^\ast}{(rs)^\ast} \!\Big/\! (r \cdot
    s^\ast)^\ast = \tfrac{r^\ast}{(r \cdot
    s^\ast)^\ast} = 
    \left(\!\tfrac{r \cdot s^{\ast\ast}}{(r
      \cdot s^\ast)^\ast}\!\right)^\ast\rlap{ .} \qedhere
  \end{equation*}
\end{proof}

Since the definition of $v$ derives from the coproduct of abstract
convex spaces, it is not unreasonable that the same coefficients
should appear here as in the convex space axioms. We will see the
deeper reason for this in Section~\ref{sec:tric-effect-algebr}.

\begin{Defn}
  \label{def:11}
  The \emph{convex monoidal structure} is the symmetric monoidal
  structure $(\star, 0)$ on the category of sets associated to the
  convex tricocycloid $\big((0,1), v, \gamma\big)$.
\end{Defn}
Working through the details of Lemma~\ref{lem:2}, the reader should
have no difficulty in verifying that the forgetful functor
$\cat{Conv} \rightarrow \cat{Set}$ is \emph{strict} symmetric monoidal
with respect to the coproduct monoidal structure on $\cat{Conv}$ and
the convex monoidal structure on $\cat{Set}$. In light of
Lemma~\ref{lem:1} and Lemma~\ref{lem:3}, we have thus verified:
\begin{Prop}
  \label{prop:8}
  With respect to the convex monoidal structure on $\cat{Set}$, the
  discrete distribution monad $\mnd D$ is a linear exponential monad.
\end{Prop}
It is now easy to check that the abstract hypernormalisation
maps~\eqref{eq:23} reduce in this case to Jacobs' hypernormalisation
maps~\eqref{eq:43}, as desired.

\section{Probabilistic examples}
\label{sec:examples}

In the following two sections, we describe instances of abstract
hypernormalisation which go beyond the motivating case. In this
section, we explore examples involving \emph{continuous} probability
monads; the hypernormalisation arising here is suitable for the
channel-to-abstract-channel construction of
Section~\ref{sec:channel-abstraction} above, and so for non-discrete
generalisations of the theory of~\cite{McIver2014Abstract}.

\subsection{The expectation monad}
\label{sec:expectation-monad}
The next simplest probabilistic monad beyond the finite discrete case
is the so-called \emph{expectation monad} $\mnd{E}$ on the category of
sets. This was named and investigated
in~\cite{Jacobs2016The-expectation}, but dates back
to~\cite{Swirszcz1974Monadic}, where it was described, implicitly, as
the monad generated by the chain of adjunctions:
\begin{equation}\label{eq:37}
  \cd{
    {\cat{K\C onv}} \ar@<-4.5pt>[r]_-{} \ar@{<-}@<4.5pt>[r]^-{} \ar@{}[r]|-{\bot} &
    {{\cat{K\H}} } \ar@<-4.5pt>[r]_-{} \ar@{<-}@<4.5pt>[r]^-{} \ar@{}[r]|-{\bot} &
    {\cat{Set}} \rlap{ .}
  }
\end{equation}

Here, $\cat{K\H}$ is the category of compact Hausdorff spaces; while
$\cat{K\C onv}$ is the category whose objects are compact convex
subsets $A$ of locally convex vector spaces, and whose morphisms are
continuous affine maps (where affineness is the condition
$f(ra + r^\ast a') = rf(a) + r^\ast f(a')$.) The two right adjoints
in~\eqref{eq:37} are the obvious forgetful functors, while the two
left adjoints send, respectively, a set $X$ to its space of
ultrafilters $\beta X$ with the Stone topology, and a compact
Hausdorff space $Y$ to its space of Radon probability measures,
identified via the Riesz representation theorem with the positive
elements of norm 1 in the ordered Banach space of continuous linear
functionals $C(Y, \mathbb{R}) \rightarrow \mathbb{R}$.

As explained in~\cite{Jacobs2016The-expectation}, the monad $\mnd{E}$
induced by the composite adjunction~\eqref{eq:37} can be described in
various ways; the most direct is as follows. We write $\E X$ for the
set of normalised, finitely additive functions $\omega \colon \P X
\rightarrow [0,1]$, i.e., functions such that $\omega(X) = 1$ and
$\omega(A \cup B) = \omega(A) + \omega(B)$ whenever $A,B \subseteq X$ are disjoint.
The action of $\E$ on morphisms is given by pushforward,
$(\E f)(\omega)(B) = \omega(f^{-1}(B))$; the monad unit
$\eta_X \colon X \rightarrow \E X$ takes $x \in X$ to the Dirac
distribution with $\eta_X(x)(A) = 1$ if $x \in A$ and
$\eta_X(x)(A) = 0$ otherwise; while the monad multiplication is given
by a suitable notion of integration against a valuation:
\begin{align*}
  \mu_X(\omega)(A) = \int_{\tau \in \E X} \tau(A) \, \mathrm d \mathbf{\omega}\rlap{ .}
\end{align*}

The details may be found in~\cite{Jacobs2016The-expectation}; however,
we will not need them to describe hypernormalisation for
$\mnd{E}$. Rather, we need only make $\mnd{E}$ into a
linear exponential monad, which we can do from the perspective of the
category of algebras using:

\begin{Prop}
  \label{prop:14}(\cite[Theorem~4]{Swirszcz1974Monadic})
  The composite adjunction in~\eqref{eq:37} is monadic.
\end{Prop}

Thus $\mnd{E}$-algebras can be identified with compact
convex subsets of locally convex vector spaces; and so if we can
understand finite coproducts of these, we can obtain the desired
linear exponential structure on $\mnd{E}$. 

\begin{Prop}
  \label{prop:15}
  If $A \subseteq V$ and $B \subseteq W$ are objects of $\cat{K\C
    onv}$, then their coproduct may be given as
  \begin{equation}\label{eq:38}
    \{(ra, r^\ast b, r) : r \in [0,1], a \in A, b \in B\}
    \subseteq V \oplus W \oplus \mathbb{R}\rlap{ .}
  \end{equation}
\end{Prop}
\begin{proof}
  This is~\cite[Proposition~7]{Roumen2017Duality}.
\end{proof}

At the level of underlying sets, the coproduct of $A$ and $B$
in~\eqref{eq:38} is given by
\begin{equation*}
  \{(a,0,1) : a \in A\} + \{(ra,r^\ast b, r) : r \in (0,1), a \in A,
  b\in B\} + \{(0,b,0) : b \in B\}
\end{equation*}
which is clearly isomorphic to $A + (0,1) \times A \times B
+ B = A \star B$. In a similar way, the coherence constraints for the
coproduct in $\cat{K\C onv}$ lift the coherence constraints for the
convex monoidal structure on $\cat{Set}$, and so we have:

\begin{Prop}
  \label{prop:11}
  The expectation monad on $\cat{Set}$ is a linear
  exponential monad with respect to the convex monoidal structure.
\end{Prop}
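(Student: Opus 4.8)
The plan is to check the two properties that, by Lemma~\ref{lem:3} and Definition~\ref{def:7}, make $\mnd E$ a linear exponential monad: that the Giry monoidal structure $(\star,0)$ lifts to $\cat{Set}^{\mnd E}$, and that the lifted structure is finite coproduct. The unit is free: since a finitely additive probability distribution on $\emptyset$ would need total weight both $0$ and $1$, we have $\E 0=\emptyset$, so the initial $\mnd E$-algebra has underlying set $0$, which is the unit of $\star$. Throughout I would exploit the monad morphism $j\colon\mnd D\Rightarrow\mnd E$ sending a finitely supported distribution to the corresponding finitely additive one; it induces $j^\ast\colon\cat{Set}^{\mnd E}\to\cat{Set}^{\mnd D}=\cat{Conv}$, so every $\mnd E$-algebra $(A,\alpha)$ has an underlying convex space, and, the structure map $\alpha$ being a counit component and hence an algebra morphism, $\alpha\colon\E A\to A$ is a convex map when $\E A$ carries its pointwise convex structure $r(\nu_1,\nu_2)=r\cdot\nu_1+r^\ast\cdot\nu_2$.

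It remains to construct binary coproducts in $\cat{Set}^{\mnd E}$ realising $(\star,0)$ on underlying sets. Given $\mnd E$-algebras $(A,\alpha)$ and $(B,\beta)$, I would put on the set $A\star B=A+\bigl((0,1)\times A\times B\bigr)+B$ the $\mnd E$-algebra structure $\gamma$ obtained as the composite
\[
\E(A\star B)\xrightarrow{\E\kappa}\E\E(A+B)\xrightarrow{\mu_{A+B}}\E(A+B)\xrightarrow{\varphi^{\mnd E}}\E A\star\E B\xrightarrow{\alpha\star\beta}A\star B\rlap{ ,}
\]
where $\kappa\colon A\star B\to\E(A+B)$ sends $\iota_k(x)$ to the Dirac distribution at $\iota_k(x)$ and sends $(r,a,b)$ to the two-point distribution $r\cdot\iota_1 a+r^\ast\cdot\iota_2 b$; where $\varphi^{\mnd E}$ is the function defined by the recipe of Proposition~\ref{prop:1}, now read for $\E$ (a finitely additive distribution on $A+B$ restricts to sub-distributions of complementary total weights on $A$ and on $B$, which one records together with their normalisations, or just one of which when the other vanishes --- a bijection, with no need of any extension theorem); and $\alpha\star\beta$ abbreviates $\alpha+\bigl((0,1)\times\alpha\times\beta\bigr)+\beta$ as in~\eqref{eq:12}. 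One checks directly that $\gamma\circ\eta_{A\star B}=\mathrm{id}$, that $(r,a,b)=\gamma\bigl(r\cdot\delta_{\iota_1 a}+r^\ast\cdot\delta_{\iota_2 b}\bigr)$ so that every element of $A\star B$ lies in the image of $\gamma$ restricted to distributions concentrated on $\iota_1A\cup\iota_2B$, and --- crucially --- that the underlying convex space of $(A\star B,\gamma)$ is exactly the $\cat{Conv}$-coproduct of the underlying convex spaces of $A$ and $B$.

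The two verifications are then: (i) $\gamma$ satisfies the $\mnd E$-algebra associativity law; and (ii) $\iota_1,\iota_2$ are $\mnd E$-algebra maps and, for $\mnd E$-algebra maps $f\colon A\to C$ and $g\colon B\to C$, the function $\chi\circ\E\langle f,g\rangle\circ\kappa\colon A\star B\to C$ (with $\chi$ the structure of $C$) is the unique $\mnd E$-algebra map restricting to $f$ and $g$ --- uniqueness being immediate from the generation statement above. Both (i) and (ii) unwind, using naturality of $\mu$ and the monad laws for $\mnd E$, to the single fact that $\chi$, being a convex map, commutes with the operation of splitting a finitely additive distribution into its normalised restrictions and recombining them; this is where the $\mnd E$-algebra axioms and the integration of Definition~\ref{def:17} enter, but only through the already-available monad structure of $\mnd E$, so that the familiar failure of Fubini for general finitely additive measures is never invoked.

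With binary coproducts established, their associativity and symmetry isomorphisms act on underlying sets exactly as the Giry coherence constraints: because the underlying convex space of $A\star B$ is the $\cat{Conv}$-coproduct, reassociating iterated convex combinations is governed by the convex-space axioms, whose combinatorics is precisely the Giry tricocycloid $v$ --- this is literally the computation behind Lemma~\ref{lem:2} and Definition~\ref{def:11}. Hence $U^{\mnd E}\colon(\cat{Set}^{\mnd E},\star,0)\to(\cat{Set},\star,0)$ is strict symmetric monoidal, so by Lemma~\ref{lem:3} $\mnd E$ is symmetric opmonoidal with respect to $(\star,0)$, and since the lifted structure on $\cat{Set}^{\mnd E}$ is by construction finite coproduct, $\mnd E$ is a linear exponential monad. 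The main obstacle is step (i)--(ii): unlike the finitary $\mnd D$, the monad $\mnd E$ offers no obvious shortcut via preservation of colimits, and the compatibility of $\gamma$ with $\mu$ has to be checked by hand, carefully tracking masses, normalisations and barycenters through the finitely additive integrals.
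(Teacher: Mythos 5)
Your overall architecture is viable and genuinely different from the paper's: you build the lifted coproduct algebra $(A\star B,\gamma)$ by hand and verify the universal property, whereas the paper invokes the new-Seely characterisation, so that it only has to exhibit the bijections $\psi_{AB}=\spn{\E\iota_1,h,\E\iota_2}\colon \E A\star\E B\to\E(A+B)$ (your $\varphi^{\mnd E}$ is the inverse of this), check strong symmetric monoidality of $\E\colon(\cat{Set},+,0)\to(\cat{Set},\star,0)$, and verify the single compatibility square~\eqref{eq:35} with $\mu$. Your unit computation ($\E 0=\emptyset$), the bijection itself, the generation statement, the uniqueness half of the universal property, and the fact that the structure maps of $\mnd E$-algebras are convex for the pointwise convex structure (via $j\colon\mnd D\Rightarrow\mnd E$) are all fine; and in fact your step (ii) really does reduce to that binary convexity, since $\kappa$ only ever produces two-point measures.

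The genuine gap is step (i), the algebra law $\gamma\circ\E\gamma=\gamma\circ\mu_{A\star B}$, and your claim that it too ``unwinds to the single fact that $\chi$, being a convex map, commutes with splitting and recombining'' is not right. Convexity obtained through $j$ only controls finitely supported (binary) mixtures, but the associativity law quantifies over arbitrary finitely additive measures on $\E(A\star B)$; after the naturality and monad-law manipulations you indicate, what remains is precisely the statement that $\mu_{A+B}$ carries an $h$-mixture of distributions to the $h$-recombination of their components --- i.e.\ the paper's diagram~\eqref{eq:35} --- and this does not follow from convexity of the structure maps. Its proof requires the analytic identity $\int_{A+B}f\,\mathrm d\,h(r,\omega,\gamma')=r\int_A f\,\mathrm d\omega+r^\ast\int_B f\,\mathrm d\gamma'$, established first for simple $f$ and then via the supremum definition of the finitely additive integral (this is the unnamed Lemma inside the paper's proof). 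You acknowledge that the compatibility of $\gamma$ with $\mu$ must be ``checked by hand'', but you neither state nor prove the fact that makes the check go through, and it is exactly where the content of the proposition lives. Supplying that integration lemma --- and then either completing your direct verification of (i) or simply routing through the paper's criterion, which quarantines all the analysis into the one square~\eqref{eq:35} --- would close the argument.
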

It follows that the expectation monad admits a notion of
hypernormalisation. To calculate this, we first define, like before,
the \emph{normalisation} of a valuation
$\omega \colon \P A \rightarrow [0,1]$ with $\omega(A) > 0$ to be the
normalised valuation $\overline \omega$ with
$\overline \omega(U) = \omega(U) / \omega(A)$. Noting that each set
$\E A$ admits a structure of abstract convex space where
$r(\omega_1, \omega_2)(U) = r \omega_1(U) + r^\ast \omega_2(U)$, we
may now describe the hypernormalisation map
$\N \colon \E(\Sigma_i A_i) \rightarrow \E(\Sigma_i \E A_i)$ as
in~\eqref{eq:29} by
\begin{equation*}
  \N(\omega) = \sum_{\substack{1 \leqslant i \leqslant
      n\\\omega_i(A_i) > 0}} \omega_i(A_i) \cdot \iota_i(\overline{\omega_i})\rlap{ .}
\end{equation*}

\subsection{The Radon monad}
\label{sec:radon-monad}
The expectation monad $\mnd{E}$ on $\cat{Set}$ arose from the
composite adjunction in~\eqref{eq:37}; on the other hand, the
left-hand adjunction in~\eqref{eq:37} induces the \emph{Radon monad}
$\mnd{R}$ on $\K\H$. Again, this was introduced implicitly
in~\cite{Swirszcz1974Monadic}, with the details now being provided
by~\cite{Mislove2011Probabilistic}.


\begin{Defn}
  \label{def:16}
  Let $X$ be a compact Hausdorff space. A \emph{Radon probability
    measure} on $X$ is a probability measure
  $\omega \colon \Sigma_X \rightarrow [0,1]$ on the Borel
  $\sigma$-algebra of $X$ such that
  $\omega(M) = \mathrm{sup}\{\omega(K) : K \subseteq M, K \text{
    compact}\}$ for all $M \in \Sigma_X$. We write
  $\R(X)$ for the space of Radon probability measures on $X$ with the
  weak topology: the coarsest topology such that, for each continuous
  $f \colon X \rightarrow \mathbb{R}$, the integration map
  $\omega \mapsto \int f \mathrm{d}\omega$ is continuous as a function
  $\R(X) \rightarrow \mathbb{R}$.
\end{Defn}

The remaining aspects of the Radon monad $\mnd{R}$ on $\K\H$ are
much as before: the action on morphisms is by pushforward, the monad
unit selects the Dirac valuations, and the multiplication is given by
integration against a measure. To obtain our notion of
hypernormalisation, we will again exploit monadicity, using:
\begin{Prop}
  (\cite[Theorem~4]{Swirszcz1974Monadic})
  The left-hand adjunction in~\eqref{eq:37} is monadic.
\end{Prop}

So, identifying the category of $\mnd{R}$-algebras with the category
$\cat{K\C onv}$, it only remains to relate the coproduct~\eqref{eq:38}
of compact convex spaces with a suitable monoidal structure on $\K\H$.
This will be the well-known \emph{topological join}:

\begin{Defn}
  \label{def:21}
  The \emph{join} of two topological spaces $X$ and $Y$ is the
  quotient space of the product space $[0,1] \times X \times Y$ under
  the smallest equivalence relation $\sim$ for which $(0,x,y) \sim
  (0,x',y)$ and $(1,x,y) \sim (1,x,y')$ for all $x,x' \in X$ and $y,y'
  \in Y$.
\end{Defn}
We can realise the topological join $X \star Y$ as the set $X + (0,1)
\times X \times Y + Y$, with a basis for the topology generated by
sets of three forms
\begin{equation*}
  U + (0,a) \times U \times Y + \emptyset \qquad 
  \emptyset + (a,b) \times U \times V + \emptyset \qquad 
  \emptyset + (b,1) \times X \times V + V
\end{equation*}
for all rationals $0<a<b<1$ and all $U \subseteq X$, $V \subseteq Y$
open. Presented in this way, it is easy to see that topological join
is part of a monoidal structure $(\star, 0)$ on $\K\H$ which lifts the
convex monoidal structure on $\cat{Set}$. On the other hand, comparing
with the formula~\eqref{eq:38}, we conclude that the forgetful functor
$\cat{K\C onv} \rightarrow \K\H$ sends coproduct to topological join,
and so we obtain:

\begin{Prop}
\label{prop:13}
The Radon monad on $\cat{K\H}$ is a linear exponential monad with
respect to the join monoidal structure $(\star, 0)$ on $\K\H$.
\end{Prop}

We thus obtain hypernormalisation maps
$\N \colon \R(\Sigma_i A_i) \rightarrow \R(\Sigma_i \R A_i)$ whose action on a
distribution $\omega$ is obtained in much the same way as previously.
The key point is that we get continuity of $\N$ for free: something
which otherwise would have required some fairly messy calculation.

\subsection{The Kantorovich monad}
\label{sec:kantorovich-monad}

For our final example of probabilistic hypernormalisation, we consider
the \emph{Kantorovich monad}~\cite{Breugel2005The-metric} on the category $\cat{CMet}_1$ whose objects are
complete metric spaces which are $1$-bounded (i.e.,
$d(x,y) \leqslant 1$ for all $x,y$) and whose morphisms are
$1$-Lipschitz mappings (i.e., $d(fx,fy) \leqslant d(x,y)$ for all
$x,y$). 

\begin{Defn}
  \label{def:23}
  Let $X$ be a complete $1$-bounded metric space. $\K(X)$ is the
  complete metric space whose elements are Radon probability measures
  on $X$, under the Kantorovich (or ``earth-mover's'') metric:
  \begin{equation*}
    d(\omega, \pi) = \mathrm{inf}\{\,\textstyle\int d_X(x,y)
    \,\mathrm{d}\mu(x,y) : \mu \in \K(X \times X), (\pi_1)_\ast(\mu) =
    \omega, (\pi_2)_\ast(\mu) = \pi\,\}
  \end{equation*}
  where the infimum is over joint distributions on $X \times X$ with marginals $\omega$ and~$\pi$.
\end{Defn}
This operation underlies a monad on $\cat{CMet}_1$ following the
established pattern; and to exhibit a notion of hypernormalisation, we
also follow the established pattern, by investigating
coproducts of $\mnd{K}$-algebras. We begin with the characterisation
of these algebras.

\begin{Defn}
  \label{def:24}
  A \emph{convex metric space} is a metric space $X$ which is also a
  convex space in the sense of Definition~\ref{def:5}, subject to the
  compatibility condition
  \begin{equation}
    d\bigl( r(x,z), r(y,z)\bigr) \leqslant r d(x,y) \text{\ \ \ 
      (or equally, } d\bigl( r(x,y), r(x,z)\bigr) \leqslant r^\ast d(y,z)
    \text{ )} \rlap{ .}\label{def:25}
  \end{equation}
  We write $\cat{C\C onv\M et}_1$ for the category of $1$-bounded
  complete convex metric spaces, and convex $1$-Lipschitz maps.
\end{Defn}

\begin{Prop}
  \label{prop:17}
  The category of $\mnd{K}$-algebras is isomorphic to $\cat{C\C onv\M
    et}_1$ over $\cat{C\M
    et}_1$.
\end{Prop}
\begin{proof}
  This is~\cite[Theorem~5.2.1]{Fritz2019A-Probability} (though see
  also~\cite[Theorem~10.9]{Mardare2016Quantitative}).
\end{proof}
And now we characterise finite coproducts in this category.
\begin{Prop}
  \label{prop:18}
  The coproduct of $X,Y \in \cat{C\C onv\M et}_1$ is the coproduct $X
  \star Y$ of underlying convex spaces, endowed with the metric:
  \begin{equation}\label{eq:39}
    d_{X \star Y}(r \cdot x + r^\ast \cdot y, s \cdot w + s^\ast \cdot
    z) = rd(x,w) + (s-r) + (1-s)d(y,z)
  \end{equation}
  for any $0 \leqslant r \leqslant s \leqslant 1$; here, by
  convention, we allow $1 \cdot x + 0 \cdot y$ to denote $x \in X
  \subseteq X \star Y$, and correspondingly for $0 \cdot x + 1 \cdot y$.
\end{Prop}
\begin{proof}
  It is a straightfoward calculation to show that this is indeed a
  complete, $1$-bounded metric satisfying~\eqref{def:25}; indeed, it
  is easy to see that $d_{X \star Y}$ is really just the Kantorovich
  metric restricted to distributions concentrated at two points.

  To exhibit the
  universal property of coproduct, let $f \colon X \rightarrow Z$ and
  $g \colon Y \rightarrow Z$ be maps in $\cat{C\C onv\M et}_1$, and
  let $\spn{f,g} \colon X \star Y \rightarrow Z$ be the induced unique map of
  convex spaces as in Section~\ref{sec:convex-coproducts}. It suffices
  to show that $\spn{f,g}$ is $1$-Lipschitz. For the most involved
  case, consider elements $r \cdot x + r^\ast \cdot y$ and $s \cdot w + s^\ast \cdot
    z$ in $X \star Y$ with $0 < r \leqslant s < 1$. We must show that
  \begin{equation*}
    d\bigl(r(fx,gy), s(fw,gz)\bigr) \leqslant rd(x,w) + (s-r) + (1-s)d(y,z)\rlap{ .}
  \end{equation*}
  Now, the left-hand side is by the triangle inequality smaller than
  \begin{equation*}
    d\bigl(r(fx,gy), r(fw,gy)\bigr) +
    d\bigl(r(fw,gy), s(fw,gy)\bigr) + d\bigl(s(fw,gy), s(fw,gz)\bigr)
  \end{equation*}
  and we calculate that $d(r(fx,gy), r(fw,gy)) \leqslant r d(fx,fw) \leqslant r
    d(x,w)$ and $d\bigl(s(fw,gy), s(fw,gz)\bigr) \leqslant s^\ast d(gy,gz) \leqslant s^\ast
    d(y,z)$ 
  using~\eqref{def:25} and contractivity of $f$ and $g$.
  So it suffices to show that $d\bigl(r(fw,gy), s(fw,gy)\bigr)
  \leqslant s-r$ in $Z$. Writing $u = fw$ and $v = gy$, this follows
  by the calculation
  \begin{align*}
    d\bigl( r(u,v), s(u,v)\bigr) &= d\bigl( s(\tfrac r s(u,v),v),
    s(u,v)\bigr) \leqslant s d\bigl(\tfrac r s(u,v), u\bigr)\\
    & = s d\bigl(\tfrac {s-r}s (v,u), \tfrac {s-r}s
    (u,u)\bigr) \leqslant (s-r) d(v,u) \leqslant s-r\rlap{ .}\qedhere
  \end{align*}
\end{proof}

We are thus in the familiar situation that the 
$\cat{C\M et}_1$-object underlying the coproduct of $X,Y \in \cat{C\C onv\M et}_1$
depends only on the underlying $\cat{C\M et}_1$-objects of $X$ and
$Y$. Accordingly, we make:

\begin{Defn}
  \label{def:22}
  The \emph{join} of two $1$-bounded complete metric spaces $X$ and
  $Y$ is the set $X \star Y = X + (0,1) \times X \times Y + Y$ endowed
  with the metric~\eqref{eq:39}. This provides the binary tensor of
  the \emph{convex monoidal structure} $(\star, 0)$ on $\cat{C\M et}_1$,
  whose remaining data is all lifted from the convex monoidal
  structure on $\cat{Set}$.
\end{Defn}
And so obtain:
\begin{Prop}
  \label{prop:16}
  The Kantorovich monad on $\cat{C\M et}_1$ is a linear exponential
  monad with respect to the join monoidal structure $(\star, 0)$ on
  $\cat{C\M et}_1$.
\end{Prop}
As such, we have a good notion of hypernormalisation for the
Kantorovich monad. Once again, the maps
$\N \colon \K(\Sigma_i A_i) \rightarrow \K(\Sigma_i \K A_i)$ are
defined in the expected way---but we obtain with no extra work the
fact that this is a contractive mapping, as required.

\subsection{Other probability monads}
\label{sec:other-prob-monads}

There are many other probability monads in existence; for example, the
Giry monads~\cite{Giry1982A-categorical} on the category of measurable
spaces, and on the category of Polish spaces; the probabilistic
powerdomain on the category of dcpos~\cite{Jones1989A-Probabilistic},
and on the category of continuous dcpos; and so on.

In each case it would be reasonable to try and derive a notion of
hypernormalisation following the pattern set out above. However, in
many of these other cases, we are hampered by a lack of a concrete
description of the monad algebras: while they are always some kind of
barycentric algebra, the precise structure involved is hard to pin
down. Without a concrete description of the algebras, we cannot give a
concrete description of their finite coproducts; and so cannot in this
way obtain the required linear exponential structure.

However, another approach is possible. As explained
in~\cite{Bierman1995What}, to obtain a linear exponential structure on
a monad $\mnd{T}$ with respect to a monoidal structure $(\otimes, I)$,
it suffices to exhibit suitably coherent isomorphisms
$T(A + B) \cong TA \otimes TB$ and $T(0) \cong I$. For other
probability monads, we can perfectly well do just this---but 
careful analysis will be necessary. For example, in the case of the
Giry monad on measurable spaces, we would need to exhibit the space
$\G(X+Y)$ of measures on a coproduct of measurable spaces as
obtained by combining in some manner the measurable spaces $\G X$ and
$\G Y$. At the level of underlying sets, this is easy: we have as usual
$\G(X+Y) \cong \G X + (0,1) \times \G X \times \G Y + \G Y$. But,
describing the $\sigma$-algebra of $\G(X+Y)$ in terms of those for $\G
X$ and $\G Y$ seems much harder---and so we leave this to future work.

\section{Combinatorial examples}
\label{sec:tric-effect-algebr}

In this section, we first consider further examples of our framework
that arise by replacing $(0,1)$ by a different symmetric tricocycloid
in the category of sets. 
One important situation that is not quite captured by this is that
where finite probability distributions on a set are replaced by
``logical distributions''---convex combinations whose coefficients are
drawn not from $[0,1]$ but from a given Boolean algebra $B$; however,
we will see that we \emph{can} capture this example by instantiating
our framework in a category other than the category of sets. Finally
in this section, we consider an example in which hypernormalisation
implements an extensional collapse for programs of type
$\mathsf{Stream}(A) \rightarrow B$.

\subsection{Other tricocycloids}
\label{sec:other-tricocycloids}
As noted above, our next examples of abstract hypernormalisation will
arise from other tricocycloids in the category of sets. To motivate
the manner in which this will happen, we first explain how we can
derive the finite discrete distribution monad $\mnd{D}$ from the
convex tricocycloid.

First of all, the convex tricocycloid induces the convex monoidal
structure $(\star, 0)$, and so, as with any symmetric monoidal
structure, we can consider the commutative $\star$-monoids. It is easy
to see that these are \emph{almost} abstract convex spaces: they are sets $A$
endowed with an operation $(0,1) \times A \rightarrow A \rightarrow A$
satisfying axioms (ii) and (iii), but not necessarily (i) from
Definition~\ref{def:5}.
The missing axiom (i) is the idempotency condition $r(a,a) = a$, which
we can capture abstractly as follows.

%

\begin{Defn}
  \label{def:13}
  A \emph{monoidal diagonal} for a symmetric monoidal category
  $(\C, \otimes, I)$ is a monoidal natural transformation
  $\delta \colon \id \Rightarrow \otimes \circ \Delta \colon \C
  \rightarrow \C$; this comprises a natural family of maps
  $\delta_A \colon A \rightarrow A \otimes A$ rendering
  commutative each diagram:
  \begin{equation}\label{eq:31}
    \cd[@C-1.5em]{
      & {A \otimes B} \ar[dl]_-{\delta_A \otimes \delta_B}
      \ar[dr]^-{\delta_{A \otimes B}} \\
      {A \otimes A \otimes B \otimes B} \ar[rr]^-{1 \otimes \sigma
        \otimes 1} & &
      {A \otimes B \otimes A \otimes B}\rlap{ .}
    }
  \end{equation}
  A commutative $\otimes$-monoid $(A, m, e)$ in $\C$ is
  \emph{idempotent} if, whenever $\delta$ is a monoidal diagonal for
  $(\C, \otimes, I)$, we have
  $m \circ \delta_A = \id_A \colon A \rightarrow A$.
\end{Defn}

%
%

When $(\C, \otimes, I)$ is $(\cat{Set}, \times, 1)$, the only monoidal
diagonal is the usual diagonal
$(\id, \id) \colon A \rightarrow A \times A$, and so idempotency in
the above sense coincides with idempotency in the usual sense.
On the other hand:
\begin{Lemma}
  \label{lem:13}
  A commutative monoid in $(\cat{Set}, \star, 0)$ is idempotent just
  when it satisfies axiom (i) as well as axioms (ii) and (iii) in
  Definition~\ref{def:5}---in other words, just when it is an abstract
  convex space.
\end{Lemma}
\begin{proof}
  By considering naturality with respect to maps $1 \rightarrow A$, we
  see that the possible monoidal diagonals
  $\delta_r \colon \id \Rightarrow \mathord{\star} \circ \mathord{\Delta}$ are indexed by
  $r \in [0,1]$; we have that $\delta_0, \delta_1$ are the left and
  right coproduct coprojections
  $A \rightarrow A + \big((0,1) \times A \times A\big) + A$, and that
  $\delta_r(a) = (r, a, a)$ for $0 < r < 1$. It follows that a
  commutative $\star$-monoid is idempotent just when it satisfies the
  additional axiom $r(a,a) = a$ for all $0 < r < 1$---that is, just
  when it is an abstract convex space.
\end{proof}

So from the convex tricocycloid, we can obtain abstract convex spaces as the
the idempotent commutative monoids for the associated monoidal
structure $(\star, 0)$, and obtain the monad $\mnd{D}$ as the free
idempotent commutative $\star$-monoid monad. More generally, for any
symmetric tricocycloid $H$ with structure maps
\begin{align*}
  v \colon H^2 & \mapsto H^2  & \gamma \colon H &
  \mapsto H \\
  (r,s) & \mapsto (rs, r \diamond s) & r &
  \mapsto r^\ast\rlap{ ,}
\end{align*}
we can consider idempotent commutative monoids for the associated
monoidal structure $\star_H$; these will be sets $A$ equipped with an
operation $H \times A \times A \rightarrow A$, written like before as
$r,a,b \mapsto r(a,b)$, that satisfies
\begin{equation*}
  r(a,a) = a \qquad r(a,b) = r^\ast(b,a) \qquad r(s(a,b),c) = (rs)\bigl(a, (r
  \diamond s)(b,c)\bigr)\rlap{ .}
\end{equation*}

Since this is clearly algebraic structure, we obtain a monad
$\mnd D_H$ on $\cat{Set}$ whose algebras are these ``$H$-convex
sets''. In fact, this monad is \emph{automatically} linear exponential
for the $\star_H$-monoidal structure, and so admits a notion of
hypernormalisation. To justify this last claim, we first use the well
known result of~\cite{Fox1976Coalgebras} that the tensor product on any
symmetric monoidal category $(\C, \otimes, I)$ lifts to the category
of commutative monoids, and there becomes coproduct; so free
commutative monoid monads are always linear exponential. Because we
consider \emph{monoidal} diagonals, the idempotent commutative monoids
are always closed under this lifted tensor product, so that free
\emph{idempotent} commutative monoid monads are also linear
exponential; in particular, any monad $\mnd{D}_H$ obtained as above is
linear exponential, as claimed.

We now illustrate this with some other examples of symmetric
tricocycloids in $\cat{Set}$. In giving these examples, we can exploit
the fact that $\mnd{D}_H$ is linear exponential to calculate the
action of the monads $\mnd{D}_H$. Indeed, it is clear that
$\D_H(1) \cong 1$, whence for any \emph{finite} set
$n \cong 1 + \dots + 1$ we have
$\D_H(n) \cong 1 \star_H \dots \star_H 1$; and to extend to
\emph{infinite} sets, we note that the theory of $H$-convex sets is
finitary, so that $\D_H(A)$ is the directed colimit of the sets
$\D_H(n)$ for all finite $n \subseteq A$.

\begin{Ex}
  \label{ex:14}
  Consider the one-element symmetric tricocycloid $1$. In this case,
  the induced monoidal structure is given by
  $A \star_1 B = A + A \times B + B$, which it is also fruitful to
  think of as
  \begin{equation}\label{eq:32}
    A \star_1 B = \Bigl((A + \{\bot\}) \times (B + \{\bot\})\Bigr) \setminus (\bot, \bot)\rlap{ .}
  \end{equation}
  In this case, the idempotent commutative
  $\star_1$-monoids are \emph{join-semilattices} (possibly
  without bottom element), the monad $\mnd D_1$ is the non-empty
  finite powerset monad, and hypernormalisation 
  $\N \colon \D_1(A+B) \rightarrow \D_1(\D_1 A + \D_1 B)$ is given by
  \begin{equation}
    \label{eq:30}
    \begin{aligned}
      \{a_1, \dots, a_n\} & \mapsto \{\{a_1, \dots, a_n\}\} \\
      \{b_1, \dots, b_m\} & \mapsto \{\{b_1, \dots, b_m\}\} \\
      \{a_1, \dots, a_n, b_1, \dots, b_m\} & \mapsto \{\{a_1,
          \dots, a_n\}, \{b_1, \dots, b_m\}\}\rlap{ .}
    \end{aligned}
  \end{equation}
  Computationally, going from $\mnd{D}$ to $\mnd{D}_1$ amounts to
  stepping back from probabilistic to non-deterministic (terminating)
  computation; we are interested in \emph{possibility} rather than
  \emph{probability}.
\end{Ex}

\begin{Ex}
  \label{ex:15}
  Let $X$ be a topological space, and let $H$ be the set of continuous
  functions $X \rightarrow (0,1)$ with the tricocycloid structure
  given pointwise as in $(0,1)$. In this case, a typical example of a
  $H$-convex set is the set of global sections of a sheaf of vector
  spaces on $X$; while the action of the monad $\mnd{D}_H$ is given by
  \begin{equation*}
    \D_H(A) = \{ S \subseteq A \text{ finite}, \omega \colon X
    \rightarrow \Delta_S \text{ continuous}\}
  \end{equation*}
  where $\Delta_S$ is the interior of the standard topological
  $\abs{S}$-simplex, given by a singleton when $\abs{S} = 1$, and by
  $\{r \in (0,1)^S : \sum_{x \in S} r(s) = 1\}$ otherwise. In other
  words, the elements of $\D_H(A)$ are those of $A$, together with all
  non-trivial ``finite convex combinations''
  \begin{equation*}
    \sum_{1 \leqslant i \leqslant n} f_i \cdot a_i
  \end{equation*}
  where each $a_i \in A$, and where the $f_i$'s are continuous maps
  $X \rightarrow (0,1)$ satisfying $\sum_i f_i = 1$ (i.e.,
  constituting a partition of unity). In this case, the notion of
  hypernormalisation carries over \emph{mutatis mutandis} from the
  motivating case, where arithmetic on the coefficients $f_i$ is done
  pointwise in $(0,1)$.
\end{Ex}


Before continuing, we take a slight detour in order to deepen our
understanding of tricocycloids. As we have just seen, we can obtain
the discrete distribution monad from the tricocycloid $(0,1)$.
However, as shown in~\cite{Jacobs2011Probabilities}, we may also
obtain it in an apparently different way: from the \emph{effect
  monoid} structure on~$[0,1]$.

\begin{Defn}
  \label{def:27}(cf.~\cite{Foulis1994Effect})
  A \emph{partial commutative monoid} is a set $M$
  with a constant $0$ and partial binary operation $\ovee
  \colon M \times M \rightharpoonup M$ satisfying the 
  axioms:
  \begin{gather*}
    r \ovee 0 \simeq r \simeq 0 \ovee r \qquad (r \ovee s) \ovee t
    \simeq r \ovee (s \ovee t) \qquad r \ovee s \simeq s \ovee r
  \end{gather*}
  where $\simeq$ denotes \emph{Kleene equality} of partially defined
  functions, i.e., one side is defined just when the other is, and
  they are then equal. $M$ is an \emph{effect
    algebra} if it is equipped with a constant
  $1$ and (total) unary operation $(\thg)^\bot$ such that:
  \begin{enumerate}[(i)]
  \item For all $r \in M$, the element $r^\bot$ is unique such that $r \ovee r^\bot \simeq 1$;
  \item If $r \ovee 1$ is defined, then $r = 0$.
  \end{enumerate}
  An effect algebra is an \emph{effect monoid} if it comes equipped
  with a (total) binary operation $r, s \mapsto r \cdot s$
  which is associative and has unit $1$, and which distributes over
  $\ovee$; i.e., we have equalities
  \begin{equation*}
    r \cdot 0 = 0 = 0 \cdot r \quad\ \  r \cdot (s \ovee t) \simeq' (r
    \cdot s) \ovee (r \cdot t) \ \ \quad 
    (r \ovee s) \cdot t \simeq' (r \cdot s) \ovee (r \cdot t) 
  \end{equation*}
  where $\simeq'$ means ``if the left-hand side is defined, then so is
  the right-hand side, and they are equal''.
\end{Defn}

\begin{Exs}
  \label{ex:4}
  \begin{enumerate}[(i),itemsep=0.25\baselineskip]
  \item $[0,1]$ is an effect monoid where $0,1$ have their
    usual meanings; $\cdot$ is ordinary multiplication; $r \ovee s$ is
    given by $r+s$ if this sum lies in $[0,1]$, and is undefined
    otherwise; and where $r^\bot = 1 - r$.
  \item Any Boolean algebra is an effect monoid, where $0,1$ are
    $\bot, \top$, where $\cdot$ is intersection $\wedge$, where $r
    \ovee s$ is given by $r \vee s$ if $r \wedge s = \bot$, and is
    undefined otherwise; and where $r^\bot$ is the complement of $r$.
  \end{enumerate}
\end{Exs}

As shown in~\cite{Jacobs2011Probabilities}, any effect monoid $M$
induces a monad $\mnd{D}_M$ on $\cat{Set}$ whose action on objects is
given by
\begin{equation}\label{eq:53}
  \D_M(A) = \{\, \omega \colon A \rightarrow M \mid
  \mathrm{supp}(\omega) \text{ finite and } \ov\displaylimits_{a \in A} \omega(a) \simeq 1\,\}
\end{equation}
and whose remaining structure is defined by analogy with the discrete
distribution monad; in particular, if $M = [0,1]$, we get the discrete
distribution monad itself.

\begin{Rk}
  \label{rk:4}
It is natural to ask how these two constructions of $\mnd{D}$---from
the tricocycloid $(0,1)$ and the effect monoid $[0,1]$---relate to
each other. More generally, we may ask whether the monad $\mnd{D}_M$
associated to an effect monoid $M$ also arises as the monad $\mnd{D}_H$
associated to some tricocycloid, or vice versa. This question was
investigated in detail by Kaddar~\cite{Kaddar2019Tricocycloids}. One
of his main results is that the assignments
\begin{equation*}
  M \mapsto M \setminus \{0,1\} \qquad \text{and} \qquad H \mapsto H
  \amalg \{0,1\}
\end{equation*}
give a bijection between tricocycloids satisfying certain
cancellativity properties, and effect monoids with
\emph{normalisation}, i.e., effect monoids such that for all
$a, b$ with $a \ovee b$ defined and $a \neq 1$, there is a
unique $c$ with $b = a^\bot c$. In particular, this construction
relates the tricocycloid $(0,1)$ and the effect monoid $[0,1]$.

An intuitive way of understanding this correspondence is by way of the
notion of pseudo-operad from Definition~\ref{def:26}. Any effect
monoid $M$ gives a symmetric pseudo-operad with underlying sets
\begin{equation}\label{eq:46}
  H_n = \{ (m_1, \dots, m_n) \in (M \setminus \{0,1\})^n : m_1 \ovee \dots \ovee m_n
  \simeq 1\}\rlap{ ,}
\end{equation}
and structure maps $\circ_i$ and $\sigma$ defined as
in~\eqref{eq:44} and~\eqref{eq:45}. By Lemma~\ref{lem:10}, this pseudo-operad
yields a tricocycloid just when each map
$\circ_i \colon H_n \times H_m \rightarrow H_{n+m-1}$ is invertible:
which is \emph{exactly} the condition that  $M$ admit normalisation.

Although this is not proven in detail
in~\cite{Kaddar2019Tricocycloids}, it flows from the above
understanding that the monad $\mnd{D}_M$ of an effect monoid with
normalisation coincides with the monad $\mnd{D}_H$ of the
corresponding tricocycloid; the point is that the $n$-ary ``$M$-convex
combination operations'' for $\mnd{D}_M$ can via normalisation be
decomposed into composites of \emph{binary} convex combinations.
\end{Rk}

\subsection{Logical hypernormalisation}
\label{sec:logic-hypern}

So far we have said nothing about the second example of an effect
monoid from Examples~\ref{ex:4}, that of a Boolean algebra. One might
hope this example to be associated to some kind of ``logical
hypernormalisation''. This does turn out to be the case, but there are
some subtleties, as we now explain.

\begin{Ex}
  \label{ex:13}
  Let $B$ be a non-trivial Boolean algebra; we may view $B$ as an
  effect monoid as in Examples~\ref{ex:4}, and so may form the
  associated monad $\mnd{D}_B$ as in~\eqref{eq:53}, with action on
  objects given by:
  \begin{equation*}
    \D_B(X) = \{\omega \colon X \rightarrow B \mid \omega \text{ has finite support and }\mathrm{im}(\omega) \text{ is a partition of $B$}\}\rlap{ .}
  \end{equation*}
  
  The Eilenberg--Moore algebras for $\mnd{D}_B$ admit a
  characterisation via binary operations due to
  Bergman~\cite[Theorem~14]{Bergman1991Actions}: they are sets $A$
  endowed with an operation $B \times A \times A \rightarrow A$,
  written $r,a,b \mapsto r(a,b)$ as usual, satisfying the following
  axioms:
\begin{enumerate}[(i)]
\item $r(a,a) = a$;
\item $r(a,b) = r^\bot(b,a)$;
\item $0(a,b) = b$;
\item $r(r(a,b),c) = r(a,c)$;
\item $r(s(a,b),b) = (rs)(a,b)$.
\end{enumerate}
But in fact, in the presence of (i)--(iii), axioms (iv)--(v) may be replaced by:
\begin{enumerate}[(i)]
\item[(iv)$'$] $r(s(a,b),c) = (sr)(a, r(b,c))$.
\end{enumerate}

The easier direction is that taking $b=c$ in (iv)$'$ yields (v), while
taking $s = r^\bot$ and using (iii) yields (iv). Conversely, if we
assume (iv) and (v), then we obtain (iv)$'$ by the calculation
\begin{align*}
  (sr)(a, r(b,c)) &= s(r(a,r(b,c)), r(b,c)) = s(r(a,c), r(b,c))\\
  &= r(s(a,b), s(c,c)) = r(s(a,b), c)
\end{align*}
using, in turn: (v); the equality $r(a,r(b,c)) = r(a,c)$ obtained from
(iv) and (ii); the non-trivial equality
$r(s(w,x),s(y,z)) = s(r(w,y),r(x,z))$
of~\cite[Proposition~11]{Bergman1991Actions}; and (i). Thus, if we
write $r(a,b)$ as $a +_r b$, then this alternate axiomisation becomes:
\begin{equation*}
  a +_r a = a \quad a +_r b = b +_{r^\bot} a
  \quad (a +_s b) +_r c = a +_{s r} (b +_r c) \quad a +_{0} b = b
\end{equation*}
which are the axioms U1--U3, U7 for the operation of \emph{guarded
  union} in the theory of guarded Kleene algebra with
tests~\cite{Smolka2019Guarded}.
\end{Ex}

\begin{Rk}
  \label{rk:3}
  The nice description of the $\mnd{D}_B$-algebras in this example
  does \emph{not} follow from the arguments of Remark~\ref{rk:4}.
  For indeed, seen as an effect monoid, $B$ does not admit normalisation,
  since in any effect monoid with normalisation, $r \neq 0$ and
  $rs = rt$ implies $s = t$, which is clearly not true in a Boolean
  algebra.

  However, we can give a more careful explanation as to why
  $\mnd{D}_B$ is generated by binary operations, by considering,
  again, the associated
  pseudo-operad of $B$ with underlying sets~\eqref{eq:46}. Since $B$
  does not admit normalisation, the $\circ_i$-maps of this
  pseudo-operad as in~\eqref{eq:44} are not invertible. However, they
  \emph{do} admit well-behaved sections $(\circ_i)^\ast$ given by
\begin{equation*}
  (t_1, \dots, t_{n+m-1}) \mapsto \bigl((t_1, \dots, t_{i-1},
  u, t_{i+m}, \dots, t_{n+m-1}), (u \Rightarrow t_i, t_{i+1}, \dots, t_{i+m-1})\bigr)
\end{equation*}
where we write $u$ for $\bigvee_{j=i}^{i+m-1} t_i$ and
$u \Rightarrow t_i$ for $\neg u \vee t_i$. Using thes these sections, we can obtain a map
$v \colon H_2 \times H_2 \rightarrow H_2 \times H_2$ much like
in~\eqref{eq:48} as the composite
\begin{equation*}
      \smash{H_2 \otimes H_2 \xrightarrow{\circ_1} H_3
    \xrightarrow{(\circ_2)^{\ast}} H_2 \otimes H_2\rlap{ ;}}
\end{equation*}
and while the $v$ so obtained is not invertible, it \emph{does}
satisfy the tricocycloid axiom~\eqref{eq:49}. This means that applying
the construction of Section~\ref{sec:tricocycloids} \emph{mutatis
  mutandis} yields not a monoidal structure, but a \emph{skew
  monoidal} structure in the sense
of~\cite{Szlachanyi2012Skew-monoidal}, with a
symmetry induced by the (invertible)
$(\thg)^\bot \colon H_2 \rightarrow H_2$. In this circumstance, there
is no problem in still considering idempotent commutative
monoids---and on doing, we recover Bergman's description of the
$\mnd{D}_B$-algebras via binary operations given above.
\end{Rk}

The fact that a Boolean algebra $B$ \emph{qua} effect monoid lacks
normalisation prevents us from carrying out ``logical
hypernormalisation'' for the monad $\mnd{D}_B$. Indeed, consider an
element $\omega \in \mathsf{D}_B(X+Y)$. We have the (complementary)
elements $\omega(X) = \bigvee_{x \in X} \omega(x)$ and
$\omega(Y) = \bigvee_{y \in Y} \omega(y)$ of $B$, and from this may
attempt to produce an element
$\N(\omega) \in \mathsf{D}_B(\mathsf{D}_BX + \mathsf{D}_BY)$. We focus
on the interesting case where neither $\omega(X)$ nor $\omega(Y)$ are
$\bot$; here, writing $\omega_X$ and $\omega_Y$ for the restriction of
$\omega$ to $X$ and $Y$, we would like to take
\begin{equation*}
  \N(\omega) = \omega(X) \cdot \overline{\omega_X}  + \omega(Y) \cdot \overline{\omega_Y}\rlap{ .}
\end{equation*}
The issue is that there is no obvious meaning to be assigned to
$\overline{\omega_X}$ or $\overline{\omega_Y}$. Indeed, $\omega_X$
represents a sum $\sum_{i} r_i \cdot x_i$ of elements of $X$ weighted
by disjoint elements $r_i \in B$ of total weight $\omega(X)$, and
there is no sensible way of distributing the missing weight
$\omega(Y)$ among the $r_i$'s to obtain a normalised distribution
$\overline{\omega_X}$. (More precisely: there is no way of doing so
which would make $\N$ into a natural transformation).

However, it turns out we \emph{can} describe a kind of logical
hypernormalisation by changing our perspective. Observe that elements
of $\mathsf{D}_B(X)$ can be seen as total computations which switch on
an element of the Boolean algebra $B$ before returning an element in
$X$. However, it also make sense to consider \emph{partial}
computations (like $\omega_X$ and $\omega_Y$ above) where one branch
of the switch is left undefined. In this situation, we can still
obtain a monad, but to do so we must allow not only computations but
also \emph{values} to be defined only on some part of $B$. Thus, we
must change both the monad \emph{and} the base category, as follows.

\begin{Defn}
  \label{def:14}
  Let $B$ be a non-trivial Boolean algebra. The category $\cat{Set}_B$
  of \emph{$B$-labelled sets} has:
  \begin{itemize}
  \item \textbf{Objects} being pairs
  $\mathbf{X} = (X, \abs{\thg}_{\mathbf{X}})$ where $X$ is a set and
  $\abs{\thg}_{\mathbf{X}} \colon X \rightarrow B \setminus \{\bot\}$;
\item \textbf{Morphisms} $\mathbf{X} \rightarrow \mathbf{Y}$ being
  functions $f \colon X \rightarrow Y$ such that
  $\abs{f(x)}_{\mathbf{Y}} = \abs{x}_{\mathbf{X}}$ for all $x \in X$.
  \end{itemize}
\end{Defn}
As suggested above, we think of a $B$-labelled set $\mathbf{X}$ as a
set of partially defined values, where the value $x$ is defined only
when the test $\abs{x}_\mathbf{X} \in B$---which we term the
\emph{domain} of $x$---evaluates to true. We choose to disallow
elements of domain $\bot$; the real reason for doing this is that, as
we shall see shortly, it allows the formulae we will write to resemble
more closely those for $\D$. However, for the moment, we may justify
the choice on the grounds that there should always be a \emph{unique}
totally undefined element---which, as such, need not be recorded
explicitly. \newcommand{\sdb}{\mathrm{s}\D_B}
\newcommand{\sDb}{\mathrm{s}\mnd{D}_B}
\begin{Defn}
  \label{def:29}
  Let $\mathbf{X}$ be a $B$-labelled set. A \emph{finitely supported
    logical subdistribution} on $\mathbf{X}$ is a function
  $\omega \colon X \rightarrow B$ such that $\mathsf{supp}(\omega)$ is
  non-empty and finite, the image of $\omega$ is pairwise-disjoint,
  and $\omega(x) \leqslant \abs{x}_\mathbf{X}$ for all $x \in X$. We
  may also write $\omega$ as a formal convex sum as in~\eqref{eq:28}.
  
  The \emph{domain} of a subdistribution $\omega$ is the value
  $\omega(X)$, where like before we write
  $\omega(A) = \bigvee_{a \in A} \omega(a)$ for any $A \subseteq X$.
  Of course, $\omega$ is a \emph{logical distribution} if it has
  domain $\top$. We write $\sdb(\mathbf{X})$ for the $B$-set of
  logical subdistributions on $\mathbf{X}$.
\end{Defn}

Note that we exclude the totally undefined logical subdistribution in
this definition. On the one hand, this is forced by the fact that our
labelled $B$-sets may not have elements of domain $\bot$; on the
other, we can justify this as capturing the fact that the totally
undefined subdistribution is ``unnormalizable'', in the sense that it
can't be expressed as a distribution on a non-trivial Boolean algebra.

The assignment $\mathbf{X} \mapsto \mathrm{s}\D_B(\mathbf{X})$
underlies a monad on $\cat{Set}_B$, whose action on objects is given
by pushforward as in~\eqref{eq:52}, and whose unit and multiplication
are given by
\begin{align*}
  \eta_\mathbf{X} \colon \mathbf{X} & \,\rightarrow\, \sdb(\mathbf{X}) & \mu_\mathbf{X} \colon \sdb\sdb(\mathbf{X}) & \,\rightarrow\, \sdb(\mathbf{X}) \\
  x & \,\mapsto\, \abs{x} \cdot x & \sum_{1 \leqslant i \leqslant n} \!\!r_i
  \cdot{\omega_i} & \,\mapsto\, \big(\,x \mapsto \!\!\bigvee_{1 \leqslant i \leqslant n} \!\!r_i \wedge
  \omega_i(x)\,\big)\rlap{ .}
\end{align*}

As with the finite distribution monad on $\cat{Set}$, the algebras for
the logical distribution monad on $\cat{Set}_B$ are well known. In
what follows, if $\mathbf{X}$ is a $B$-labelled set, then we write
$X(b) \subseteq X$ for the set of elements of domain $b$.

\begin{Defn}
  \label{def:30}
  Let $B$ be a Boolean algebra. A \emph{presheaf} on $B$ is a
  $B$-labelled set $\mathbf X$ endowed with restriction maps
  $\res {(\thg)} c \colon X(b) \rightarrow X(c)$ for all
  $c \leqslant b$ in $B \setminus \{\bot\}$, satisfying the evident
  functoriality axioms. A presheaf is a \emph{sheaf} if for any
  disjoint $b, c \in B \setminus \{\bot\}$, the following diagram is a
  product:
    \begin{equation*}
      X(b) \xleftarrow{\res{(\thg)} b} X(b \vee c) \xrightarrow{\res{(\thg)} c} X(c)\rlap{ .}
    \end{equation*}
    We write $\cat{Psh}(B)$ for the category of presheaves on $B$,
    whose maps are $B$-labelled set maps commuting with the
    restriction operations, and $\cat{Sh}(B) \leqslant \cat{Psh}(B)$
    for the full subcategory of sheaves.
\end{Defn}

\begin{Prop}
  \label{prop:6}
  The category of Eilenberg--Moore algebras of the logical
  subdistribution monad $\sDb$ is isomorphic over $\cat{Set}_B$ to the
  category of sheaves $\cat{Sh}(B)$.
\end{Prop}
This result is a simple exercise in the theory of sheaves; we include
a sketch proof for completeness.

\begin{proof}[Proof (sketch)]
  Given a sheaf $\mathbf X$, we endow its underlying $B$-labelled set
  with $\sDb$-algebra structure
  $\theta \colon \sdb(\mathbf{X}) \rightarrow \mathbf{X}$ by taking
  $\theta( \sum_i r_i \cdot x_i )$ to be the unique element
  $y \in X(\bigvee_i r_i)$ such that $\res {y} {r_i} = \res{x_i}{r_i}$
  for each $i \in I$. Conversely, if
  $\theta \colon \sdb(\mathbf{X}) \rightarrow \mathbf{X}$ is a
  $\D$-algebra, then we make its underlying $B$-set into a presheaf on
  $B$ by taking
  \begin{align*}
    \res{(\thg)} c \colon X(b) &\rightarrow X(c) \\
    x & \mapsto \theta(c \cdot x)\rlap{ .}
  \end{align*}
  
  For the sheaf axiom, if $b,c \in B \setminus \{\bot\}$ are disjoint,
  $x \in X(b)$ and $y \in X(c)$, then the unique $z \in X(b \vee c)$
  with $\res z b = x$ and $\res z c = y$ is given by
  $z = \theta(b \cdot x + c \cdot y)$.
\end{proof}

Given this result, we can exploit the well-known characterisation of
coproducts in categories of sheaves to give a concrete description of
finite coproducts in the category of
$\sDb$-algebras; alternatively, we can give a direct proof
paralleling Lemma~\ref{lem:2}. Either way, we have:
\begin{Lemma}
  \label{lem:12}
  The initial sheaf on $B$ is the empty $B$-labelled set $\mathbf{0}$;
  while if $\mathbf X$
  and $\mathbf Y$ are sheaves on $B$, then their coproduct
  $\mathbf X \star \mathbf Y$ in $\cat{Sh}(B)$ is the $B$-labelled set
  with elements of domain $b$ determined by:
  \begin{equation*}
    (\mathbf X \star \mathbf Y)(b) = X(b) \, + \Bigl(\,\sum_{\substack{b_1, b_2 \in B \setminus \bot \\ b = b_1 \ovee b_2}} X(b_1) \times Y(b_2)\Bigr) \ + \ Y(b)\rlap{ ,}
  \end{equation*}
  and whose sheaf restriction operation $(\mathbf X \star \mathbf
  Y)(b) \rightarrow (\mathbf X \star \mathbf Y)(c)$ is inherited from $X$ and
  $Y$ on the outer summands, and on the middle summand is
  given by
  \begin{align*}
    \sum_{\substack{b_1, b_2 \in B \setminus \bot\\b = b_1 \ovee b_2}} X(b_1) \times Y(b_2) & \rightarrow
    X(c) + \Bigl(\,\sum_{\substack{c_1, c_2 \in B \setminus \bot\\c = c_1 \ovee c_2}} X(r) \times Y(s)\Bigr) + Y(c)\\
    (x,y) & \mapsto
    \begin{cases}
      \mathsf{in}_1(\res x c) & \text{ if $c \leqslant b_1$;}\\
      \mathsf{in}_3(\res y c) & \text{ if $c \leqslant b_2$;}\\
    \mathsf{in}_2(\res{x}{b_1 \wedge c}, \res{y}{b_2 \wedge c}) & \text{ otherwise.}
    \end{cases}
  \end{align*}
\end{Lemma}
Since the underlying $B$-labelled set of the sheaf coproduct
$\mathbf{X} \star \mathbf{Y}$ relies only on the underlying
$B$-labelled sets of $\mathbf{X}$ and $\mathbf{Y}$, and not on their
sheaf structure, we may expect that $(\star, \mathbf{0})$ extends to a
monoidal structure on $\cat{Set}_B$ with respect to which $\sDb$ is
linear exponential. This is in fact the case. The only non-trivial
point is obtaining the unitality, associativity and symmetry
constraints for $\star$; and like before, these are determined by the
corresponding constraints for coproducts of sheaves---so long as these
descend to $\cat{Set}_B$.

We concentrate here on associativity. Note first that, if we define
$X(\bot)$ to be a singleton set for any $B$-labelled set $\mathbf X$,
then the binary tensor can be written as
\begin{equation*}
  (\mathbf{X} \ast \mathbf{Y})(b) = \sum_{b = b_1 \ovee b_2} X(b_1) \times X(b_2)\rlap{ .}
\end{equation*}

With this convention, if $\mathbf{X}$, $\mathbf{Y}$ and $\mathbf{Z}$ are $B$-labelled sets,
then the three-fold tensors
$(\mathbf{X} \star \mathbf{Y}) \star \mathbf{Z}$ and
$\mathbf{X} \star (\mathbf{Y} \star \mathbf{Z})$ satisfy
\begin{align*}
  ((\mathbf{X} \star \mathbf{Y}) \star \mathbf{Z})(b) & = 
  \sum_{b = q \ovee t} (X \star Y)(q) \times Z(t) = 
  \sum_{b= q \ovee t} \Bigl(\sum_{q = r \ovee s} X(r) \times Y(s)\Bigr) \times Z(t)\\
  (\mathbf{X} \star (\mathbf{Y} \star \mathbf{Z}))(b) & = 
  \sum_{b= r \ovee u} X(r) \times (Y \star Z)(u) = 
  \sum_{b =r \ovee u} X(r) \times \Bigl(\sum_{u = s \ovee t} Y(s) \times Z(t)\Bigr)
\end{align*}
which are isomorphic to each other by way of the set
\begin{equation*}
  \sum_{b = r \ovee s \ovee t} X(r) \times Y(s) \times Z(t)\rlap{ .}
\end{equation*}
Assembling these isomorphisms as $b \in B$ varies gives the desired
natural isomorphisms
$(\mathbf{X} \star \mathbf{Y}) \star \mathbf{Z} \rightarrow \mathbf{X}
\star (\mathbf{Y} \star \mathbf{Z})$. It is now straightforward to
verify the Mac~Lane coherence axioms making this into a a symmetric
monoidal structure on $\cat{Set}_B$, and to see that this structure
lifts to the coproduct monoidal structure on the category of
$\sDb$-algebras (i.e., sheaves on $B$). Thus we have shown:

\begin{Prop}
  \label{prop:19}
  The logical subdistribution monad $\sDb$ is linear exponential with
  respect to the monoidal structure $(\star, \mathbf{0})$ on $\cat{Set}_B$.
\end{Prop}

The induced hypernormalisation maps
$\N \colon \sdb(\Sigma_i X_i) \rightarrow \sdb(\Sigma_i \sdb (X_i))$
can be described as folows. An element of $\sdb(\Sigma_i X_i)$ of
domain $b$ is a function $\omega \colon \Sigma_i X_i \rightarrow B$ of
finite support whose image is a partition of $b \in B$, such that
$\omega(x) \leqslant \abs{x}_{\mathbf{X}_i}$ for all $x \in X_i$. For
each $i$, we have the elements $\omega(X_i) \in B$, which themselves
constitute an $I$-fold partition of $b$; and we also have for each $i$
the restricted function
$\omega_i = \res{\omega}{X_i} \colon X_i \rightarrow B$, which, so
long as $\omega(X_i) \neq \bot$, is an element of $\D X_i$ of domain
$\omega(X_i)$. As such, we can define $\N(\omega)$ to be the element
\begin{equation*}
  \sum_{\substack{i \in I \\ \omega(X_i) \neq \bot}} \omega(X_i) \cdot \iota_i(\omega_i) \in \D(\Sigma_i \D X_i)\rlap{ .}
\end{equation*}

In this way, we have sidestepped the problem of normalising the
subdistributions $\omega_i$ to total ones, as we would have needed to
do for the monad $\mnd{D}_B$ on $\cat{Set}$; instead, we can allow
them to remain as subdistributions, and so obtain a good notion of
hypernormalisation.

One way of seeing this is that we have decoupled the two aspects of
hypernormalisation of probability distributions from each other: on
the one hand, collecting like terms to obtain an outer distribution;
and on the other, normalising the inner sub-distributions to genuine
distributions. Our logical hypernormalisation does the former, but not
the latter; and while this may seem to limit its value, we will see in
Section~\ref{sec:appl-stre-proc} that this is not the case.

\subsection{Normalisation of continuous functions on streams}
\label{sec:norm-cont-funct}

For our final example, we describe an instance of hypernormalisation
of a different flavour; in it, the hypernormalisation map will allow
us to normalise programs encoding continuous functions defined on a
type of streams.

By a \emph{stream} over a finite alphabet $B$, we mean simply an
$\mathbb{N}$-indexed family
$\vec b = (b_0, b_1, \dots) \in B^\mathbb{N}$. The set
$\mathsf{Stream}(B)$ of streams can be topologised, as usual, as a
product of discrete spaces, and then a function
$f \colon \mathsf{Stream}(B) \rightarrow A$ to a finite discrete space
$A$ is continuous precisely when it is \emph{locally constant}:
\begin{equation*}
  \forall \vec b \in \mathsf{Stream}(B).\, \exists n \in \mathbb{N}.\,
  \forall \vec c \in \mathsf{Stream}(B).\, b_{\leqslant n} = c_{\leqslant n}
  \ \Longrightarrow\  f(\vec b)= f(\vec c)\rlap{ ;}
\end{equation*}
where we define $b_{\leqslant n} = (b_0, \dots, b_n)$. In other words,
a continuous $f$ uses only finitely many tokens of its input to
compute its output.

It is an idea which goes back to Brouwer (though
see~\cite{Hancock2009Representations} for a modern treatment) that
such functions can be represented, non-uniquely, by well-founded
$B$-ary branching trees with leaves labelled in $A$, i.e., elements of
the initial algebra $T_B(A) = \mu X.\, X^B + A$. For example, the
binary tree to the left in:
\begin{equation*}
  \cd[@-1.3em]{
    & b & & c& & b & & b\\
    a & & {\ast \ar@{-}[ul]^-{0} \ar@{-}[ur]_-{1}} & & b & & {\ast \ar@{-}[ul]^-{0} \ar@{-}[ur]_-{1}}\\
    & {\ast \ar@{-}[ul]^-{0} \ar@{-}[ur]_-{1}} & & & & {\ast \ar@{-}[ul]^-{0} \ar@{-}[ur]_-{1}}\\
    & & & \ast \ar@{-}[ull]^-{0} \ar@{-}[urr]_-{1} 
  } \qquad \qquad 
  \cd[@-1.3em]{
    & b & & c\\
    a & & \ast \ar@{-}[ul]^-{0} \ar@{-}[ur]_-{1}\\
    & \ast \ar@{-}[ul]^-{0} \ar@{-}[ur]_-{1} & & b\\
    & & \ast \ar@{-}[ul]^-{0} \ar@{-}[ur]_-{1} 
  }
\end{equation*}
is a decision tree encoding the function
$f \colon \mathsf{Stream}(\{0,1\}) \rightarrow \{a,b,c\}$ with
\begin{equation*}
  f(00 \dots) = a\text{ ,} \qquad f(010 \dots) = f(1\dots) =
  b\text{ ,} \quad \text{and} \quad f(011\dots) = c\rlap{ .}
\end{equation*}

However, this tree is an inefficient encoding, since, on receiving an
first input token of $1$, we request needless additional input
tokens before returning the output value $b$. There is an obvious
algorithm which normalises such decision trees to maximally efficient
ones, in this case the tree right above: starting from the leaves, we
recursively contract all subtrees as left below to leaves as to the right:
\begin{equation}\label{eq:42}
\cd[@-1.5em]{x \ar@{-}[dr] & & \ar@{-}[dl] x \\ & \ast} \qquad
\rightsquigarrow \qquad x\rlap{ .}
\end{equation}

While this normalisation algorithm is clearly motivated from a
computational perspective, it is less clear how to understand it
structurally. We will show that, in fact, it arises as an instance of
hypernormalisation.

To see it in this way, we consider the monad $\mnd{T}_B$ whose value
at $A$ is the set $T_B(A)$ of $B$-ary branching trees with leaves in
$A$. This is the \emph{free monad} on the endofunctor $X \mapsto X^B$
of $\cat{Set}$, so that $\mnd{T}_B$-algebras are exactly
\emph{$B$-ary magmas}: sets $X$
endowed with a $B$-ary operation $X^B \rightarrow X$. We will show
that $\mnd{T}_B$ is a linear exponential monad on $\cat{Set}$ for a
suitable monoidal structure, and so admits hypernormalisation. From
this, we obtain for any finite set $A$ a function
\begin{equation}\label{eq:41}
T_B(A) \cong  T_B(\Sigma_{a \in A} 1) \xrightarrow{\N}
  T_B\bigl(\Sigma_{a \in A} T_B 1\bigr) \xrightarrow{\! T_B(\Sigma_{a \in A}
    !)\!}
  T_B(\Sigma_{a \in A} 1) \cong T_B(A)
\end{equation}
which we will see implements the normalisation algorithm
described above.

To find the monoidal structure on $\cat{Set}$ for which $\mnd T_B$ is
linear exponential, the first step, as usual, is to characterise
finite coproducts of $\mnd T_B$-algebras---so equally, finite
coproducts of $B$-ary magmas. Clearly, the empty set underlies the
initial $B$-ary magma. The following result characterises binary
coproducts of $B$-ary magmas; in its statement, and henceforth, if
$\tau \in T_B(A)$ and $A' \subseteq A$, then we say that $\tau$ is
\emph{$A'$-labelled} if it lies in $T_B(A') \subseteq T_B(A)$.

\begin{Lemma}
  \label{lem:9}
  Let $(X, \xi \colon X^B \rightarrow X)$ and
  $(Y, \gamma \colon Y^B \rightarrow Y)$ be $B$-ary magmas. Their
  coproduct in the category of $B$-ary magmas is given by
  \begin{equation*}
    X \odot Y \defeq \{\,\tau \in T_B(X + Y) : \tau \text{ has no
      non-trivial $X$- or $Y$-labelled subtree}\,\}\rlap{ .}
  \end{equation*}
  The magma operation $\theta \colon (X \odot Y)^B \rightarrow X \odot Y$ takes
  a family of trees $(\tau_b : b \in B)$ to the disjoint union of the
  $\tau_b$'s, joined together at a fresh root vertex,
  \begin{equation*}
    \theta(\tau_b : b \in B) = 
    \cd{
      \tau_b \ar@{-}[dr]^-{}&  \dots & \tau_{b'}\rlap{, } \ar@{-}[dl]_-{} \\ & \bullet
    }
  \end{equation*}
  except in the cases where this would create a non-trivial $X$- or
  $Y$-labelled subtree. These exceptional cases are where:
  \begin{itemize}
  \item For each $b \in B$, the tree $\tau_b$ is a one-vertex tree
    $\bullet_{\textstyle x_b}$ labelled by some $x_b \in X$; in this case, we take $\theta(\tau_b :
    b \in B) = \xi(x_b : b \in B)$.
  \item For each $b \in B$, the tree $\tau_a$ is a one-vertex tree
    $\bullet_{\textstyle y_b}$ labelled by some $y_b \in Y$; in this case, we take $\theta(\tau_b :
    b \in B) = \gamma(y_b : b \in B)$.
  \end{itemize}
\end{Lemma}
\begin{proof}
  By its construction, the magma operation on $X \odot Y$ is
  well-defined, and the two maps
  $\iota_1 \colon X \rightarrow X \odot Y \leftarrow Y \colon \iota_2$
  sending an element of $X$ or $Y$ to the corresponding one-vertex
  labelled tree are magma homomorphisms. Moreover, given $B$-ary magma
  morphisms $f \colon X \rightarrow Z$ and $g \colon Y \rightarrow Z$,
  the unique magma morphism $\spn{f,g} \colon X \odot Y \rightarrow Z$
  with $\spn{f,g} \iota_1 = f$ and $\spn{f,g}\iota_2 = g$ is given as
  the composite of the inclusion $X \odot Y \hookrightarrow T_B(X+Y)$
  with the unique $B$-ary magma morphism $T_B(X+Y) \rightarrow Z$
  extending the function $\spn{f,g} \colon X+Y \rightarrow Z$.
\end{proof}

Note that the set $X \odot Y$ underlying the coproduct of $(X, \xi)$
and $(Y, \gamma)$ does not rely on $\xi$ and $\gamma$, but only on the
underlying sets $X$ and $Y$. So, like before, we may posit the
existence of a symmetric monoidal structure $(\odot, 0)$ on
$\cat{Set}$ for which $\mnd T_B$ is linear exponential. Once again,
the only point requiring work is defining the associativity, unitality
and symmetry isomorphisms, and, like before, we concentrate on the case of associativity.
For this, we follow the idea of Lemma~\ref{lem:10} by interposing a
ternary tensor product $X \odot Y \odot Z\subseteq T_B(X+Y+Z)$
composed of those trees without any non-trivial $X$-, $Y$- or
$Z$-labelled subtree.
\begin{Lemma}
  \label{lem:11}
  For any sets $X,Y,Z$ we have isomorphisms
\begin{equation*}
  (X \odot Y) \odot Z \xrightarrow{\ \ell\ } X \odot Y \odot Z
  \xleftarrow{\ r\ } X \odot
  (Y \odot Z)
\end{equation*}
\end{Lemma}
\begin{proof}[Proof (sketch)]
  An element $\tau \in (X \odot Y) \odot Z$ is an
  $(X \odot Y) + Z$-labelled tree, and each $X \odot Y$-leaf is itself
  an $X+Y$-labelled tree. These data are equally encapsulated by an
  $X+Y+Z$-labelled tree with a collection of vertices marked: namely,
  the roots of the $X \odot Y$-trees at the leaves of
  the original $\tau$. Forgetting this vertex marking yields the
  element $\ell(\tau) \in X \odot Y \odot Z$; and to show invertibility of
  the $\ell$ so defined, we need to reconstruct $\tau$'s marking
  uniquely from $\ell(\tau)$. But this is easy;
  it is uniquely characterised by the following properties:
  \begin{enumerate}[(i)]
  \item The subtree above every marked vertex is $X+Y$-labelled;
  \item Every leaf vertex is above some marked vertex;
  \item No non-leaf vertex has all of its $B$ children marked,
  \end{enumerate}
  and we can obtain it via the following algorithm: first mark each
  $X$- or $Y$-leaf; then recursively move markings towards the root
  until (ii) is satisfied. This must terminate by well-foundedness.
  This defines the invertible $\ell$; now $r$ is dual.
\end{proof}
We can thus take
$r^{-1} \ell \colon (X \odot Y) \odot Z \rightarrow X \odot (Y \odot
Z)$ as the desired associativity constraint. The pentagon axiom
equating the two maps
$((X \odot Y) \odot Z) \odot W \rightarrow X \odot (Y \odot (Z \odot
W))$ now follows in the spirit of Lemma~\ref{lem:10} from the
observation that each edge in this pentagon is simply a way of
redistributing markings on a particular element of the quaternary tensor
$X \odot Y \odot Z \odot W$.
Proceeding similarly for the unit and symmetry constraints, we may
complete the construction of the symmetric monoidal structure
$(\odot, 0)$ on $\cat{Set}$ and see that it lifts to the coproduct
monoidal structure on the category of $\mnd{T}$-algebras. In other
words, we have:

\begin{Prop}
  \label{prop:2}
  The $B$-ary magma monad $\mnd{T}_B$ is linear exponential with
  respect to the monoidal structure $(\odot, 0)$ on $\cat{Set}$.
\end{Prop}

The associated hypernormalisation maps
$\N \colon T_B(\Sigma_i X_i) \rightarrow T_B\bigl(\Sigma_i T_B(X_i)\bigr)$ may
be described as follows. An element $\tau \in T_B(\Sigma_i X_i)$ is a
$\Sigma_i X_i$-labelled $B$-ary tree. There is a unique way of marking
vertices in $\tau$ such that:
\begin{enumerate}[(i)]
\item The subtree above any marked vertex is $X_i$-labelled for some
  $i$;
\item No vertex has all $B$ of its children marked.
\end{enumerate}
On constructing this marking, the subtree above each marked vertex is
an element of $\Sigma_i T_B(X_i)$; so viewing each such subtree as a
leaf labelled in $\Sigma_i T_B(X_i)$, we have obtained the element
$\N(\tau) \in T_B(\Sigma_i T_B(X_i))$.

More intuitively, if we think of 
$\tau \in T_B(\Sigma_i X_i)$ as a decision tree computing a continuous
function $f \colon \mathsf{Stream}(B) \rightarrow \Sigma_i X_i$, then
$\N(\tau) \in T_B\bigl(\Sigma_i T_B(X_i)\bigr)$ computes 
a function $f' \colon \mathsf{Stream}(B) \rightarrow \Sigma_i T_B(X_i)$ as
follows: given $S \in \mathsf{Stream}(B)$, we run the computation of
$f(S)$ using $\tau$, and halt at the precise moment that the summand
$X_i \subseteq \sum_i X_i$ in which $f(S)$ lies has been determined.
We then return as $f'(S)$ the $X_i$-labelled subtree lying above the
halting vertex, i.e., the continuation of the computation of $f(S)$ as
an element of the set $X_i$.

We now use our understanding of hypernormalisation to describe the map
$T_B(A) \rightarrow T_B(A)$ of~\eqref{eq:41}. This first applies
$\N \colon T_B(\Sigma_{a \in A}1) \rightarrow T_B(\Sigma_{a \in
  A}T_B1)$, whose effect on $\tau \in T_B(A)$ is to mark the roots of
the largest subtrees whose leaves are all labelled with a single element
$a \in A$. It then applies the function
$T_B(\Sigma_a !) \colon T_B(\Sigma_{a \in A}T_B1) \rightarrow
T_B(\Sigma_{a \in A}1)$, which has the effect of collapsing the marked
vertex at the root of each $\{a\}$-labelled subtree to the bare leaf $a$.
The endofunction of $T_B(A)$ so resulting acts on a tree $\tau$
precisely by carrying out the contractions in~\eqref{eq:42}---in other
words, it normalises $\tau$ to its most efficient representative, as desired.

\section{Application: relating behavioural and trace equivalence}
\label{sec:appl-stre-proc}

In this final section, we use our framework for abstract
hypernormalisation to relate behavioural equivalence and trace
equivalence for certain kinds of automata; more precisely, we will use
it to describe a normalisation-by-evaluation process
which normalises \emph{behaviours}---states modulo bisimilarity---to
maximally efficient representatives of the corresponding
\emph{traces}---states modulo trace equivalence.


\subsection{Generative systems}
\label{sec:generative-systems}

We begin by introducing the kinds of automata that we will be
concerned with.

\begin{Defn}
  \label{def:31}
  Let $\mnd T$ be a monad on 
  a category $\C$ with finite coproducts, and $A$ a finite set. A
  \emph{generative $\mnd T$-system} with alphabet $A$ is an object
  $S \in \C$ endowed with a map
  $\sigma \colon S \rightarrow T(\Sigma_{a \in A} S)$. We write
  $\cat{Gen}_A(\mnd T)$ for the category of generative
  $\mnd T$-systems over $A$, with as maps the obvious homomorphisms.
\end{Defn}

\begin{Ex}
  \label{ex:1}
When $\mnd T$ is the identity monad on $\cat{Set}$, this
definition yields \emph{deterministic} generative systems
$\sigma \colon S \rightarrow A \times S$. We see $S$ as a set of
states, and $\sigma$ as associating to each state in $S$ an output
token from $A$, and a next state in~$S$; so we have a degenerate
kind of Mealy machine.
\end{Ex}
\begin{Ex}
  \label{ex:2}
On the other hand, for the non-empty finite powerset monad on
$\cat{Set}$, a generative $\mnd P_f^+$-system
$\sigma \colon S \rightarrow \P_f^+(A \times S)$ is a
non-terminating, finitely branching \emph{labelled transition system}.
\end{Ex}
We will use $\mnd T = \mathsf{id}_\cat{Set}$ and $\mnd T = \mnd P_f^+$
as our running examples in the next few sections; in
Sections~\ref{sec:stream-processors} and~\ref{sec:prob-logic-gener}
below, we will consider other monads capturing \emph{stream
  processors}~\cite{Hancock2009Representations, Garner2021Stream},
\emph{probabilistic generative systems}~\cite{Glabbeek1990Reactive},
and a ``logical'' version of generative systems in the spirit of
Section~\ref{sec:logic-hypern} above.

\subsection{Behaviours and traces}
\label{sec:behaviours-traces}

For our general notion of generative system, there are two natural
notions of equivalence between states. On the one hand, we have
\emph{bisimilarity}; for example, in the case of a labelled transition
$\sigma \colon S \rightarrow \P_f^+(A \times S)$, states $s,t \in S$
are bisimilar if they are related by a bisimulation $R$ on $S$, i.e.,
an equivalence relation with
\begin{equation}\label{eq:54}
  x \mathrel{R} y \text{ and } (a,x') \in \sigma(x) \implies \exists y'.\, x' \mathrel{R} y' \text{ and } (a,y') \in \sigma(y)\rlap{ .}
\end{equation}
On the other hand, we have \emph{trace equivalence}; for labelled
transition systems, the \emph{trace} of a state $s_0 \in S$ is the
subset $\mathsf{tr}(s_0) \subseteq \mathsf{Stream}(A)$ satisfying
\begin{equation}\label{eq:55}
  \mathsf{tr}(s_0) = \{(a_0, a_1, \dots) \mid \exists (s_1, s_2, \dots) \text{ s.t. } (a_i, s_{i+1}) \in \sigma(s_i) \,\forall i\}
\end{equation}
and two states are deemed trace equivalent if they have the same
trace.

We now provide the general definitions of bisimilarity and trace
equivalence for generative $\mnd T$-systems. We begin with the
better-known case of bisimilarity:

\begin{Defn}
  \label{def:34}
  Let $\mnd T$ be a monad on a category $\C$ with finite coproducts
  and $A$ a finite set. An \emph{object of behaviours} for generative
  $\mnd{T}$-systems over $A$ is a final object
  $\beta \colon \mathsf{Beh} \rightarrow T(\Sigma_{a \in A}
  \mathsf{Beh})$ in the category $\cat{Gen}_A(\mnd T)$. The
  \emph{behaviour map} for a generative $\mnd T$-system
  $\sigma \colon S \rightarrow T(\Sigma_{a \in A} S)$ is the unique
  homomorphism
  $\mathsf{beh} \colon (S, \sigma) \rightarrow (\mathsf{Beh}, \beta)$.
  Two states are \emph{bisimilar} if they have the same image under
  the behaviour map.
\end{Defn}

\begin{Ex}
  \label{ex:3}
  When $\mnd T = \mnd{id}_\cat{Set}$, the object of behaviours is
  $\mathsf{Stream}(A)$ with the structure map $(\mathsf{hd},
  \mathsf{tl}) \colon \mathsf{Stream}(A)
  \rightarrow A \times \mathsf{Stream}(A)$ given by 
  \begin{equation*}
     (a_0, a_1, \dots)  \mapsto \bigl(a_0, (a_1, a_2, \dots)\bigr)\rlap{ .}
  \end{equation*}
  Given a generative system
  $\sigma = (\sigma_0, \sigma_1) \colon S \rightarrow A \times S$ and
  a state $s \in S$, we have
  \begin{equation*}
    \mathsf{beh}(s) = (\sigma_0(s), \sigma_0(\sigma_1(s)), \sigma_0(\sigma_1(\sigma_1(s))), \dots)\rlap{ .}
  \end{equation*}
\end{Ex}
\begin{Ex}
  \label{ex:5}
  When $\mnd T = \mnd{P}_f^+$, the object of behaviours can be
  described via the techniques of~\cite{Aczel1989Final}. Consider the
  set $\mathsf{Beh}'$ of rooted trees which are finitely branching,
  purely infinite (i.e., with no leaf vertices), and have edges
  labelled by elements of $a$. This becomes a labelled transition system
  $\mathsf{Beh}' \rightarrow \P_f^+(A \times
  \mathsf{Beh}')$~via
  \begin{equation*}
    \cd[@-1em]{ \tau_1 \ar@{-}[dr]_-{a_1}& \dots &
        \tau_n\rlap{, } \ar@{-}[dl]^-{a_n} \\ & \bullet }  \qquad \mapsto\qquad  \{(a_1,
    \tau_1), \dots, (a_n, \tau_n)\}
  \end{equation*}
  and the object of behaviours is the quotient of $\mathsf{Beh}_w$ by
  the largest bisimulation as in~\eqref{eq:54}. Alternatively,
  following~\cite{Worrell2005Final, Adamek2015Final}, we can describe
  it as the set of \emph{strongly extensional} trees of
  $\mathsf{Beh}_w$ (i.e., those admitting no tree bisimulation).
  
  Given a labelled transition system $\sigma \colon S \rightarrow
  \P_f^+(A \times S)$ and $s \in S$, the behaviour $\mathsf{beh}(s)$
  is given coinductively by
  \begin{equation*}
    \mathsf{beh}(s) = \cd[@-1em]{ \mathsf{beh}(t_1) \ar@{-}[dr]_-{a_1}& \dots &
        \mathsf{beh}(t_1)\rlap{, } \ar@{-}[dl]^-{a_n} \\ & \bullet } \qquad \text{where } \sigma(s) = \{(a_1, t_1), \dots, (a_n, t_n)\}\rlap{ .}
  \end{equation*}
\end{Ex}

We now turn to trace equivalence. As for bisimulation, this will be
characterised in terms of equality under a map, this time to a
suitably-defined object of traces.

\begin{Defn}
  \label{def:32}
  Let $\C$ be a category with finite coproducts, and $A$ a finite set.
  An \emph{$A$-ary comagma} in $\C$ is an object $X$ endowed with a
  map $X \rightarrow \sum_{a \in A} X$; we write $\cat{Comag}_A(\C)$
  for the category of $A$-ary comagmas in $\C$ and their
  homomorphisms. Given a monad $\mnd T$ on $\C$, an \emph{object of
    traces} for generative $\mnd T$-systems over $A$ is defined to be
  a final object
  $\tau \colon \mathsf{Tr} \rightarrow \Sigma_{a \in A} \mathsf{Tr}$
  in the category $\cat{Comag}_A(\C^\mnd{T})$.
\end{Defn}

To motivate this definition, note that a generative $\mnd T$-system
$\sigma \colon S \rightarrow T(\sum_{a \in A} S)$ is equally well an
$A$-ary comagma in the Kleisli category $\cat{Kl}(\mnd T)$; and it is
an idea going back to~\cite{Power1999Coalgebraic} that a suitable
notion of object of traces in this context should be given by a final
object in $\cat{Comag}_A(\cat{Kl}(\mnd T))$. Note this is
\emph{different} from a final object in $\cat{Gen}_A(\mnd T)$, since
morphisms $(S, \sigma) \rightarrow (S, \sigma')$ in the latter
category involve maps $S \rightarrow S'$ in $\C$, while those in the
former category involve maps $S \rightarrow T(S')$.

However, the category $\Kl{\mnd T}$ is not well-behaved, and a final
$A$-ary comagma is \emph{not} guaranteed to exist. Often, this problem
can be resolved by expanding the category $\C$ and the monad
$\mnd T$---see, for example,~\cite{Hasuo2007Generic,
  Kerstan2013Coalgebraic}---but this must be done in an \emph{ad hoc}
manner which requires thought. However, the category $\Kl{\mnd T}$ can
always be embedded into the much better-behaved category $\C^{\mnd T}$
of Eilenberg--Moore algebras---as the free algebras therein---and this
better behaviour now guarantees, under very mild side-conditions, the
existence of a final $A$-ary comagma, which we may thus declare to be
our object of traces.

\begin{Ex}
  \label{ex:7}
 When 
    $\mnd T = \mnd{id}_\cat{Set}$, the category of $A$-ary comagmas in
    $\cat{Set}^\mnd{id}$ is precisely the category of deterministic
    generative systems
    $S \rightarrow A \times S$. So the object of traces is simply
    the object of behaviours 
    $(\mathsf{Stream}(A), (\mathsf{hd}, \mathsf{tl}))$.
\end{Ex}
\begin{Ex}
  \label{ex:8}
  When $\mnd T = \mnd{P}_f^+$, an $A$-ary comagma in
  $\cat{Set}^{\mnd{P}_f^+}$ is a join-semilattice $S$ endowed with a
  $\vee$-preserving map $S \rightarrow S \star_1 \cdots \star_1 S$,
  where $\star_1$ is as in Example~\ref{ex:14}. In light of the
  alternative presentation~\eqref{eq:32} of this monoidal product,
  such a map can equally be described as a $\vee$-preserving map
  $\sigma \colon S \rightarrow (S_\bot)^A$---where $S_\bot$ is $S$
  with a new bottom element adjoined---such that no
  $\sigma(s)$ is constant at $\bot$. In this context, the following
  proposition describes the object of traces.
\end{Ex}

\begin{Prop}
  \label{prop:23}
  The final $A$-ary comagma in the category $\cat{Set}^{\mnd{P}_+^f}$
  of join-semilattices is the set $\mathsf{Tr}$ of
  non-empty closed subsets of $\mathsf{Stream}(A)$, with
  join-semilattice structure given by union, and with comagma
  structure map
  \begin{align*}
    \tau \colon \mathsf{Tr} &\rightarrow (\mathsf{Tr} \cup \{\emptyset\})^A\\
    \U & \mapsto a_0 \mapsto \{(a_1, a_2, \dots) \mid (a_0, a_1, \dots) \in \U \}\rlap{ .}
  \end{align*}
\end{Prop}
The \emph{closed} sets here are those which are closed in the product
topology. To make this explicit, recall that for
$\vec a \in \mathsf{Stream}(A)$, we write $a_{\leqslant n}$ for the
initial segment $(a_0, \dots, a_n)$. If, for
$V \subseteq \mathsf{Stream}(A)$, we also write $V_{\leqslant n}$
for $\{a_{\leqslant n} : \vec a \in V\}$, then a set
$V \subseteq \mathsf{Stream}(A)$ is closed if $\vec a \in V$ whenever
$a_{\leqslant n} \in V_{\leqslant n}$ for all $n \in \mathbb{N}$.
\begin{proof}
  Consider an $A$-ary comagma $\sigma \colon S \rightarrow (S_\bot)^A$
  in $\cat{Set}^{\mnd{P}_+^f}$. We may extend $\sigma$ to a $\vee$-
  and $\bot$-preserving map $S_\bot \rightarrow (S_\bot)^A$ by
  defining $\sigma(\bot)(a) = \bot$; and now, given $s_0 \in S$ and
  $\vec a \in \mathsf{Stream}(A)$, we may define a stream
  $\sigma^\ast(s_0, \vec a) = (s_0, s_1, \dots) \in
  \mathsf{Stream}(S_\bot)$ by taking $s_{i+1} = \sigma(s_i)(a_i)$ for
  each $i$. We will say that $s_0$ \emph{generates} $\vec a$ if no
  element of $\sigma^\ast(s_0, \vec a)$ is $\bot$.

  With these conventions, we can now define the map
  $u \colon S \rightarrow \mathsf{Tr}$ by
  \begin{equation}\label{eq:27}
    u(s) = \{\vec a \in \mathsf{Stream}(A) \mid s \text{ generates } \vec a\}\rlap{ .}
  \end{equation}

  To see that $u(s)$ is closed, consider
  $\vec a \in \mathsf{Stream}(A)$ with
  $a_{\leqslant n} \in \mathsf{Tr}_{\leqslant n}$ for each $n$. Then 
  for each $n$, there exists
  $\vec b = (a_0, \dots, a_n, b_{n+1}, \dots) \in \mathsf{Stream}(A)$
  such that $s$ generates $\vec b$; in particular,
  $\sigma^\ast(s, \vec b)_{\leqslant n}$ contains no $\bot$ for each
  $n$, whence also
  $\sigma^\ast(s, \vec a)_{\leqslant n} = \sigma^\ast(s, \vec
  b)_{\leqslant n}$ contains no $\bot$ for each
  $n$. Thus $s$ generates $\vec a$ so that $\vec a \in u(s)$ as required.
  
  To see that $u$ preserves $\vee$, consider $s, t \in S$ and
  $\vec a \in \mathsf{Stream}(A)$. Since $\sigma$ preserves $\vee$'s,
  we see that $\sigma^\ast(s \vee t, \vec a)$ is the pointwise join of
  $\sigma^\ast(s, \vec a)$ and $\sigma^\ast(t, \vec a)$; in
  particular, this means that $s \vee t$ generates $\vec a$ if and
  only if either $s$ generates $\vec a$ or $t$ generates $\vec a$:
  which is to say that $u(s \vee t) = u(s) \cup u(t)$, as desired.

  Next, to show that $u$ is a comagma homomorphism, we
  must verify that, for any $s_0 \in S$ and $a_0 \in A$, we have
  \begin{equation*}
    \{(a_1, a_2, \dots) : (a_0, a_1, \dots) \in u(s_0)\} =
    \begin{cases}
      u\bigl(\sigma(s_0)(a_0)\bigr) & \text{ if
        $\sigma(s_0)(a_0) \neq \bot$;}\\
      \emptyset & \text{ if
        $\sigma(s_0)(a_0) = \bot$.}
    \end{cases}
  \end{equation*}
  We can rephrase this as saying
  \begin{equation*}
    (a_0, a_1, \dots) \in u(s_0) \quad \iff \quad \sigma(s_0)(a_0) = s_1 \neq \bot \text{ and } (a_1, a_2, \dots) \in  u(s_1)\rlap{ ,}
  \end{equation*}
  which is clear on observing that if
  $\sigma(s_0)(a_0) = s_1 \neq \bot$, then
  $\sigma^\ast(s_0, (a_0, a_1, \dots)) = \sigma^\ast(s_1, (a_1, a_2,
  \dots))$. This proves that $u$ is a comagma homomorphism
  $S \rightarrow \mathsf{Tr}$, and it remains to prove it is
  \emph{unique}. So let $f$ be another such homomorphism; then for
  each $s_0 \in S$ we have
  \begin{equation}\label{eq:61}
    (a_0, a_1, \dots) \in f(s_0) \ \iff \ \sigma(s_0)(a_0) = s_1 \neq \bot \text{ and } (a_1, a_2, \dots) \in  f(s_1)\rlap{ ,}
  \end{equation}
  and we must show $f(s_0) = u(s_0)$.
  
  In one direction, if $\vec a \in f(s_0)$, then
  $\sigma^\ast(s_0, \vec a)$ is clearly never $\bot$, and so
  $\vec a \in u(s_0)$. Conversely, suppose that $\vec a \in u(s_0)$.
  If the sequence $\sigma^\ast(s_0, \vec a)$ is $(s_0, s_1, \dots)$, then for
  each $n$, the set $f(s_{n+1})$ is non-empty; so letting
  $(b_{n+1}, b_{n+2}, \dots)$ be any element of it, we may apply the
  leftward implication in~\eqref{eq:61} $n+1$ times to see that
  $(a_0, a_1, \dots, a_n, b_{n+1}, \dots) \in f(s_0)$. But this means
  that $a_{\leqslant n} \in f(s_0)_{\leqslant n}$ for each $n$; since
  $f(s_0)$ is closed, we conclude that $\vec a \in f(s_0)$ as
  required.
\end{proof}

Returning to the general situation, we now explain how every
generative $\mnd{T}$-system has a trace map valued in the object of
traces. As we noted above, any such system
$\sigma \colon S \rightarrow T(\Sigma_{a \in A} S)$ is equally an
$A$-ary comagma in the Kleisli category $\cat{Kl}(\mnd T)$ which may
in turn be seen as an $A$-ary comagma on the free
algebra $F^{\mnd T}(S)$ in $\C^{\mnd T}$. Now using the finality of
the object of traces yields the desired trace map, as the following
definition makes precise:

\begin{Defn}
  \label{def:33}
  Let $\sigma \colon S \rightarrow T(\Sigma_{a \in A} S)$ be a
  generative $\mnd T$-system over $A$. The \emph{associated free
    $A$-ary comagma} in $\C^{\mnd T}$ is $(F^\mnd{T}S, \sigma^\sharp)$, where
  \begin{equation*}
    \sigma^{\sharp} \defeq F^{\mnd T}(S) \xrightarrow{T\sigma} F^{\mnd T}(T(\Sigma_{a \in A} S)) \xrightarrow{\mu_{\Sigma_{a} S}} F^{\mnd T}(\Sigma_a S) \cong \Sigma_{a \in A} F^{\mnd T}(S)\rlap{ .}
  \end{equation*}
  This assignment is the action on objects of a functor
  \begin{equation}\label{eq:57}
    (\thg)^\sharp \colon \cat{Gen}_A(\mnd T) \rightarrow \cat{Comag}_A(\C^\mathsf T)
  \end{equation}
  which on maps sends $f \colon (S, \sigma) \rightarrow (U, \upsilon)$ to  $F^\mathsf{T}f \colon (F^\mnd{T}
  S, \sigma^\sharp) \rightarrow (F^\mnd{T} U, \upsilon^\sharp)$.
  
  If $\mnd T$ has an object of traces $(\mathsf{Tr},\tau)$, then
  the \emph{trace map} of a generative $\mnd T$-system $(S, \sigma)$
  is the map $\mathsf{tr} \colon S \rightarrow \mathsf{Tr}$
  obtained as the restriction along
  $\eta_S \colon S \rightarrow U^\mathsf{T}F^\mathsf{T}S$ of the
  unique comagma map
  $(F^{\mnd T}S, \sigma^\sharp) \rightarrow (\mathsf{Tr}, \tau)$.
\end{Defn}

\begin{Ex}
  \label{ex:6}
  When $\mathsf{T} = \mathsf{id}_\cat{Set}$, the object of traces is just
  the object of behaviours, and the trace map is just
    the behaviour map.
\end{Ex}

\begin{Ex}
  \label{ex:10}
  When $\mnd T = \mnd{P}_f^+$, a generative system is a labelled
  transition system $\sigma \colon S \rightarrow \P_f^+(A \times S)$.
  The associated $A$-comagma in the category of join-semilattices is
  given by
  \begin{align*}
    \sigma^\sharp \colon {\P}_f^+(S) & \rightarrow ({\P}_f^+(S) \cup \{\emptyset\})^A \\
    V & \mapsto a \mapsto \{s_1 \in S : (a,s_1) \in \sigma(s_0) \text{ for some } s_0 \in V\}\rlap{ .}
  \end{align*}
  It follows from this description that $\{s_0\}$ generates $\vec a$
  if and only if there exist $s_1, s_2, \dots \in S$ such
  that $(a_i, s_{i+1}) \in \sigma(s_i)$ for each $i$; which is to say
  that the non-empty subset
  $\mathsf{tr}(s_0) \subseteq \mathsf{Stream}(A)$ associated to $s_0$
  is the trace of~\eqref{eq:55}.
\end{Ex}

\subsection{Normalisation by trace evaluation for behaviours}
\label{sec:norm-trace-eval}

As is well known, bisimilarity is a finer equivalence on states of
labelled transition systems than trace equivalence; for example, we
may consider the labelled transition systems
\begin{align*}
  \sigma \colon \{s,t,u\} &\rightarrow \P_{f}^+(\{0,1\} \times \{s, t,u\})&
  \sigma' \colon \{s',t',u'\} &\rightarrow \P_f^+(\{0,1\} \times \{s',t',u'\})\\
  s & \mapsto \{(0,t), (1,t), (1,u)\}  & s' & \mapsto \{(0,t'),(1,u')\}\\
  t & \mapsto \{(0,t)\} & t' & \mapsto \{(0,t')\}\\
  u & \mapsto \{(1,t)\} & u' &\mapsto \{(0,t'), (1,t')\}\rlap{ .}
\end{align*}

The states $s$ and $s'$ both have trace
$\{00\dots, 100\dots,1100\dots\}$; however, their respective
behaviours are the trees
\begin{equation*}
  \cd[@-1em]{
  \vdots & \vdots & \vdots\\
  \bullet \ar@{-}[u]^-{0} &  
  \bullet \ar@{-}[u]^-{0} &
  \bullet \ar@{-}[u]^-{0} \\
  \bullet \ar@{-}[u]^-{0} &  
  \bullet \ar@{-}[u]^-{0} &
  \bullet \ar@{-}[u]^-{1} \\
  & \bullet \ar@{-}[ul]^-{0} \ar@{-}[u]^-{1} \ar@{-}[ur]_-{1}
} \qquad \text{and} \qquad 
  \cd[@-1em]{
  \vdots & \vdots & \vdots\\
  \bullet \ar@{-}[u]^-{0} &  
  \bullet \ar@{-}[u]^-{0} &
  \bullet \ar@{-}[u]^-{0} \\
  \bullet \ar@{-}[u]^-{0} &  
  \bullet \ar@{-}[u]^-{0} \ar@{-}[ur]_-{1} \\
  & \bullet \ar@{-}[ul]^-{0} \ar@{-}[u]^-{1} 
  }
\end{equation*}
which are \emph{not} bisimilar. The difference between $s$ and $s'$
which this captures is that an execution from state $s$ must
\emph{immediately} decide whether it will output zero, one or two 1's,
while an execution from state $s'$ need only decide the first bit now,
and can defer the decision on bit $2$ until the next computation step.

In fact, $s'$ above is ``minimal'' in the sense that, at each step,
it only requires a decision on the very next output bit.
We might hope that this minimality can be captured by somehow
``normalising'' the behaviour of $s$ to that of $s'$. What we will now
see is that this is in fact possible: we have an embedding-retraction
pair
\begin{equation}\label{eq:56}
  \cd{
    \mathsf{Beh} \ar@{->>}@/^1em/[r]^-{\mathsf{reflect}} &
    \mathsf{Tr} \ar@{ >->}@/^1em/[l]^-{\mathsf{reify}}
  }
\end{equation}
wherein $\mathsf{reflect}$ takes the trace of a behaviour, while its
section $\mathsf{reify}$ takes the minimal realisation of each trace
as a behaviour. In particular, the ``normalisation function''
$\mathsf{reify} \circ \mathsf{reflect}$ sends does the right thing for
our example above: $\mathsf{beh}(s')$ is mapped to $\mathsf{beh}(s)$.

We now explain how to construct the maps in~\eqref{eq:56} for
generative $\mnd T$-systems, where $\mnd T$ is \emph{any} monad which
admits hypernormalisation, i.e., any linear exponential monad. 
We first define the maps in each direction;
note that the easier direction,
$\mathsf{reflect}$, does not rely on hypernormalisation.

\begin{Defn}
  \label{def:35}
  If the object of behaviours $\mathsf{Beh}$ and the object of traces
  $\mathsf{Tr}$ exist for generative $\mnd{T}$-systems over $A$, then
  the \emph{reflection} map
  $\mathsf{reflect} \colon \mathsf{Beh} \rightarrow \mathsf{Tr}$ is
  the trace map of $\mathsf{Beh}$ \emph{qua} generative
  $\mnd T$-system.
\end{Defn}

\begin{Defn}
  \label{def:36}
  Let $\mnd T$ be a linear exponential monad on a symmetric monoidal
  category $(\C, \otimes, I)$ with finite coproducts. If
  $\sigma \colon S \rightarrow S \otimes \cdots \otimes S$ is an $A$-ary
  comagma in $\C^\mathsf{T}$, then the \emph{associated generative
    $\mnd T$-system} is $(S, \sigma^\flat)$, where 
  \begin{equation*}
    \sigma^\flat = S \xrightarrow{\sigma} S \otimes \cdots \otimes S \xrightarrow{\eta_S \otimes \cdots \otimes \eta_S} T(S) \otimes \cdots \otimes T(S) \xrightarrow{\varphi^{-1}} T(S + \cdots + S)\rlap{ .}
  \end{equation*}
  This assignment is the action on objects of a functor
  \begin{equation}\label{eq:58}
    (\thg)^\flat\colon \cat{Comag}_A(\mathsf{T}) \rightarrow \cat{Gen}_A(\mathsf{T})
  \end{equation}
  which on maps sends $f \colon (S, \sigma) \rightarrow (U, \upsilon)$
  to $f \colon (S, \sigma^\flat) \rightarrow (U, \upsilon^\flat)$.
  
  If the object of behaviours $\mathsf{Beh}$ and the object of traces
  $\mathsf{Tr}$ exist for generative $\mnd{T}$-systems over $A$, then
  the \emph{reification} map
  $\mathsf{reify} \colon \mathsf{Tr} \rightarrow \mathsf{Beh}$ is the
  behaviour map of the associated generative $\mnd T$-system of
  $\mathsf{Tr}$.
\end{Defn}
\begin{Ex}
  \label{ex:12}
  Recall that when $\mnd T = \mnd{P}_f^+$, the set $\mathsf{Beh}$ is
  the set of purely infinite, finitely branching $A$-labelled trees
  modulo bisimilarity, while $\mathsf{Tr}$ is the set of closed
  subsets of $\mathsf{Stream}(A)$. In this case, the reflection map
  sends a tree $\tau$ to the set of $A$-streams that label the
  infinite paths from the root of $\tau$. On the other hand, the
  reflection map sends a closed subset $V \subseteq
  \mathsf{Stream}(A)$ to the tree
  \begin{equation*}
    \tau = \cd[@-1em]{ \tau_1 \ar@{-}[dr]_-{a_1}& \dots &
        \tau_n\rlap{, } \ar@{-}[dl]^-{a_n} \\ & \bullet } 
  \end{equation*}
  where $a_1, \dots, a_n$ are the distinct elements of the set
  $V_{\leqslant 0}$, and where $\tau_i$ is (coinductively) the tree
  associated to the closed subset
  $\{(b_0, b_1, \dots) : (a_i, b_0, b_1, \dots) \in V\}$.
\end{Ex}
In the preceding example, it is easy to see that reification is a
section of reflection; and in fact, this is true in general:

\begin{Prop}
  \label{prop:21}
  In the situation of Definition~\ref{def:36}, we have that
  $\mathsf{reflect} \circ \mathsf{reify} = \mathsf{id}$.
\end{Prop}
\begin{proof}
  Let the coalgebra structures of $\mathsf{Tr}$ and $\mathsf{Beh}$ be
  $\tau \colon \mathsf{Tr} \rightarrow \mathsf{Tr} \otimes \cdots
  \otimes \mathsf{Tr}$ and $\beta \colon \mathsf{Beh} \rightarrow
  T(\mathsf{Beh} + \cdots + \mathsf{Beh})$ respectively. By
  definition, $\mathsf{reflect}$ is the 
  unique homomorphism
  \begin{equation*}
    u \colon (\mathsf{Tr}, \tau^{\flat}) \rightarrow (\mathsf{Beh}, \beta) \qquad \text{in } \cat{Gen}_A(\mnd T)
  \end{equation*}
  while $\mathsf{reflect}$ is the unique homomorphism
  \begin{equation*}
    v \colon (F^\mathsf{T}(\mathsf{Beh}), \beta^{\sharp}) \rightarrow (\mathsf{Tr}, \tau) \qquad \text{in } \cat{Comag}_A(\C^\mnd T)
  \end{equation*}
  precomposed by $\eta_{\mathsf{Beh}}$. So $\mathsf{reflect} \circ
  \mathsf{reify} = v \circ \eta_{\mathsf{Beh}} \circ u = v \circ Tu
  \circ \eta_{\mathsf{Tr}}$, which is equally the precomposition by
  $\eta_{\mathsf{Tr}}$ of the composite homomorphism
  \begin{equation}\label{eq:59}
    (F^\mathsf{T}(\mathsf{Tr}), (\tau^\flat)^\sharp) \xrightarrow{u^\sharp = Tu}  (F^\mathsf{T}(\mathsf{Beh}), \beta^\sharp) \xrightarrow{v} (\mathsf{Tr}, \tau)\rlap{ .}
  \end{equation}

  We claim that~\eqref{eq:59} is in fact equal to the algebra
  structure map $\alpha \colon T(\mathsf{Tr}) \rightarrow \mathsf{Tr}$
  of the Eilenberg--Moore $\mnd T$-algebra $\mathsf{Tr}$; since this
  structure map satisfies
  $\alpha \circ \eta_{\mathsf{Tr}} = \mathsf{id}$, this will complete
  the proof. To prove the claim, note that, since~\eqref{eq:59} is the
  unique map into a terminal object, it suffices to show that $\alpha$
  is itself a map
  $(F^\mathsf{T}(\mathsf{Tr}), (\tau^{\flat})^\sharp) \rightarrow
  (\mathsf{Tr}, \tau)$. Now, $(\tau^\flat)^\sharp$ is, by
  Definitions~\ref{def:33} and~\ref{def:36}, the following composite
  \begin{equation*}
    \cd{
      T(\mathsf{Tr}) \ar[d]_-{T\tau} \\
      T(\otimes_{a} \mathsf{Tr}) \ar[r]^-{T(\otimes_a \eta)} &
      T(\otimes_{a} T(\mathsf{Tr})) \ar[r]^-{T\varphi^{-1}} &
      T^2(\Sigma_{a} \mathsf{Tr}) \ar[r]^-{\mu} & 
      T(\Sigma_{a} \mathsf{Tr}) \ar[r]^-{\varphi} &
      \otimes_a T(\mathsf{Tr})\rlap{ ;}
    }
  \end{equation*}
  however, because $\mnd T$ is a linear exponential monad, the
  composite along the bottom row is,
  by~\cite[Theorem~3.1.6]{Blute2015Cartesian}, exactly the opmonoidal
  structure map
  $\upsilon \colon T(\otimes_{a} \mathsf{Tr}) \rightarrow \otimes_{a}
  T(\mathsf{Tr})$, and so to say that
  $\alpha \colon (F^\mathsf{T}(\mathsf{Tr}), (\tau^{\flat})^\sharp)
  \rightarrow (\mathsf{Tr}, \tau)$ is to say that the following
  diagram commutes:
  \begin{equation*}
    \cd{
      T(\mathsf{Tr}) \ar[rr]^-{\alpha} \ar[d]_-{T\tau} & & 
      \mathsf{Tr} \ar[d]^-{\tau} \\
      T(\otimes_a \mathsf{Tr}) \ar[r]^-{\upsilon} & \otimes_a T(\mathsf{Tr}) \ar[r]^-{\otimes_a \alpha} &\otimes_a \mathsf{Tr}\rlap{ .}
    }
  \end{equation*}
  As the bottom row is, by~\eqref{eq:60}, the $\mnd{T}$-algebra
  structure of $\otimes_a (\mathsf{Tr}, \tau)$, commutativity of
  this diagram is precisely the fact that $\tau \colon (\mathsf{Tr},
  \tau) \rightarrow \otimes_a (\mathsf{Tr}, \tau)$ in $\C^\mnd T$.
\end{proof}

As indicated above, the composite
$\mathsf{reify} \circ \mathsf{reflect}$ is an idempotent endofunction
of the set of behaviours, which implements ``normalisation by trace
evaluation''. In the case of $\mathsf{P}_f^+$, it normalises each
purely infinitely, finitely branching $A$-labelled tree to a
trace-equivalent one in which the children of any given node have
\emph{distinct} $A$-labels: thus, behaviours which, at
each step, only decide the next output token.

\subsection{Stream processors}
\label{sec:stream-processors}
In the final sections of this paper, we investigate the
reification--reflection pair for other computationally meaningful
linear exponential monads $\mnd T$. We begin with the free $B$-ary
magma monad $\mnd T_B$ from Section~\ref{sec:norm-cont-funct}.

\emph{Generative $\mnd T_B$-systems}. A generative $\mnd T_B$-system
$\sigma \colon S \rightarrow T_B(A \times S)$ is a \emph{stream
  processor}~\cite{Hancock2009Representations}. The
function $\sigma$ assigns to each state $s$ a decision tree which, by
consuming an initial segment of a stream of $B$-values, decides an
output $A$-value and a next state in $S$. When continued indefinitely,
this process turns a stream of $B$-values into a stream
of $A$-values; whence the name.

\emph{Behaviours}. In this case, the object of behaviours
$\mathsf{Beh}$ is the final coalgebra $\nu X.\, T_B(A \times X)$;
since it is a final coalgebra of a polynomial endofunctor, it has an
entirely standard description, which can be stated as follows: it is
the set of (possibly infinite) $B$-ary branching trees wherein each
non-leaf vertex is labelled by a finite list of $A$-elements; each
leaf is labelled by a stream of $A$-elements; and there is no infinite
simple path of nodes labelled by the empty string.

We introduce the following notation: given a tree $t \in \mathsf{Beh}$
and some $a \in A$, we write $a \cdot t$ for the tree $t$ with the
element $a$ adjoined to the front of the string labelling the root.
Using this, we may describe the coalgebra structure of $\mathsf{Beh}$
as follows. For a tree $t \in \mathsf{Beh}$, a \emph{cut vertex} is
one labelled by a non-empty string, but whose proper ancestors are all
unlabelled. For a cut vertex $v$, we define $\mathsf{hd}(v) \in A$ and
$\mathsf{tl}(v) \in \mathsf{Beh}$ to be such that the subtree of
descendents of $v$ is $\mathsf{hd}(v) \cdot \mathsf{tl}(v)$. Now
$\beta \colon \mathsf{Beh} \rightarrow T_B(A \times \mathsf{Beh})$
takes $t$ to the well-founded $B$-ary tree obtained by replacing each
cut vertex $v \in t$ with the leaf labelled by
$(\mathsf{hd}(v), \mathsf{tl}(v))$.

\emph{Behaviour map}. Given a generative $\mathsf{T}_B$-system
$\sigma \colon S \rightarrow T_B(A \times S)$, the behaviour
$\mathsf{beh}(s)$ of $s \in S$ is the tree obtained by coinductively
replacing each $(a,t)$-labelled leaf of $\sigma(s)$ by the tree
$a \cdot \mathsf{beh}(t)$.

\emph{Traces}. An $A$-ary comagma in $\cat{Set}^{\mnd{T_B}}$ is a
$B$-ary magma $(X, \xi)$ endowed with a $B$-ary magma homomorphism
\begin{equation*}
  (X, \xi) \rightarrow \overbrace{(X, \xi) \odot \cdots \odot (X, \xi)}^{\text{$A$ copies}}
\end{equation*}
where $\odot$ is as in Lemma~\ref{lem:9}. In this
case,~\cite[Theorem~37]{Garner2021Stream} proves that the object
of traces $\mathsf{Tr}$ is the set of continuous functions
$\mathsf{Stream}(B) \rightarrow \mathsf{Stream}(A)$, endowed with
the $B$-ary magma structure $\zeta$ given by
\begin{equation*}
  \Bigl(\zeta(f_b : b \in B)\Bigr)(c_0, c_1, \dots) = f_{c_0}(c_1, c_2, \dots)
\end{equation*}
and the $A$-ary comagma structure $\tau$ found as follows. Given a continuous
function $f \colon \mathsf{Stream}(B) \rightarrow \mathsf{Stream}(A)$,
we call a string $b_0 \cdots b_k \in B^\ast$ \emph{determining} if the
composite $f(\thg)_0 \colon \mathsf{Stream}(B) \rightarrow A$ is
constant on the clopen set
\begin{equation}\label{eq:63}
  [b_0 \cdots b_k] = \{ \vec b \in \mathsf{Stream}(B) : b_{\leqslant k} = (b_0, \dots, b_k)\}\rlap{ .}
\end{equation}
We call $(b_0, \dots, b_k)$ \emph{minimal} if it is determining, but
$(b_0, \dots, b_{k-1})$ is not so. Now by continuity of $f$, we can
find minimal strings $m_1, \dots, m_k \in B^\ast$, elements
$a_1, \dots, a_k \in A$ and continuous functions
$f_1, \dots, f_k \in \mathsf{Tr}$ such that the following clauses
totally define $f$:
\begin{equation*}
  f(m_1 \cdot \vec b) = a_1 \cdot f_1(\vec b) \qquad \cdots \qquad 
  f(m_k \cdot \vec b) = a_k \cdot f_k(\vec b)
\end{equation*}
where $\cdot$ denotes concatenation. The finite strings
$m_1, \dots, m_k$ can be seen as vertex addresses in the purely
infinite $B$-ary tree: and now replacing the subtree rooted at $m_i$
by a leaf vertex labelled by $(a_i, f_i) \in A \times \mathsf{Tr}$
yields an element of $T_B(A \times \mathsf{Tr})$; and the
\emph{minimality} of each $m_i$ ensures that this tree in fact lies in
$\mathsf{Tr} \odot \cdots \odot \mathsf{Tr} \subseteq T_B(A \times
\mathsf{Tr})$, so that we may define it to be $\tau(f)$.

\emph{Trace map}. For a given stream processor
$\sigma \colon S \rightarrow T_B(A \times S)$, the trace of a state
$s \in S$ is determined as in~\cite[\sec
3]{Hancock2009Representations}. First, for each set $V$, we write
$\theta_V$ for the function
$\mathsf{Stream}(B) \times T_B(V) \rightarrow \mathsf{Stream}(B)
\times V$ determined by
\begin{align*}
  \theta_V(\vec b, \bullet_{\textstyle v}) &= (\vec b, v)\\
  \theta_V\biggl((b_0, b_1, \dots),     \cd[@-1em]{
    \tau_b \ar@{-}[dr]^-{}&  \dots & \tau_{b'} \ar@{-}[dl]_-{} \\ & \bullet
  }
  \biggr) &= \theta\bigl((b_1, b_2,\dots), \tau_{b_0}\bigr)\rlap{ .}
\end{align*}
Now the trace $\mathsf{tr}(s)$ of $s \in S$ is the homomorphism
$\mathsf{Stream}(B) \rightarrow \mathsf{Stream}(A)$ determined by
\begin{equation*}
  \smash{\mathsf{tr}(s)(\vec b)_0 = a \quad \text{and} \quad \mathsf{tr}(s)(\vec b)_{i+1} = \mathsf{tr}(s')(\vec c)_i \text{ where $\theta(\vec b, \sigma(s)) = (\vec c, a, s')$.}}
\end{equation*}
The fact that this is a correct description of the trace map
is~\cite[Proposition~42]{Garner2021Stream}.

\emph{Reflection and reification}. The reflection map $\mathsf{Beh}
\rightarrow \mathsf{Tr}$ is simply the instantiation of the above
trace map at $(\mathsf{Beh}, \beta)$. On the other hand, the
generative $\mathsf{T}_B$-system structure on $\mathsf{Tr}$ is
the composite
\begin{equation*}
  \mathsf{Tr} \xrightarrow{\tau} \mathsf{Tr} \odot \cdots \odot \mathsf{Tr} \xrightarrow{\subseteq} T_B(A \times \mathsf{Tr})
\end{equation*}
and the reflection map $\mathsf{Tr} \rightarrow \mathsf{Beh}$ is the
associated behaviour map of this $\mathsf{T}_B$-system. We illustrate
this with an example drawn from~\cite[Example~22]{Garner2021Stream}. Consider the following
two stream processors:
\begin{align*}
  \sigma \colon \{s\} & \rightarrow T_B(A \times \{s\}) &
  \tau \colon \{t\} & \rightarrow T_B(A \times \{t\})\\
  s & \mapsto (a, \bullet_{\textstyle s}) & t & \mapsto     \cd[@-1em]{
      (a, t) \ar@{-}[dr]^-{}&  \dots & (a, t)\rlap{. } \ar@{-}[dl]_-{} \\ & \bullet
    }
\end{align*}

The traces $\mathsf{tr}(s)$ and $\mathsf{tr}(t)$ are both the
continuous function
$\mathsf{Stream}(B) \rightarrow \mathsf{Stream}(A)$ sending every
stream to $(a,a,a,\dots)$. On the other hand, as shown in
\emph{loc.~cit.}, their behaviours are \emph{distinct}: indeed,
$\mathsf{beh}(s)$ is the one-vertex tree whose root is labelled by
$(a, a, a, \dots)$, while $\mathsf{beh}(t)$ is the purely infinite
$B$-ary tree whose every vertex is labelled with a single $a$. The
difference which this captures is that $s$ produces its stream of
$a$'s while completely ignoring its input, and on the other, $t$
inefficiently consumes a single input token before outputting each
$a$. In fact, $s$ is the \emph{most} efficient realisation of this
function, and the normalisation-by-trace-evaluation map
$\mathsf{reify} \circ \mathsf{reflect}$ sends $\mathsf{beh}(t)$ to
$\mathsf{beh}(s)$. In general, the image of this normalisation
function comprises those trees $t$ in which no vertex has child
subtrees of the form $a \cdot t_1, \dots, a \cdot t_k$ for the
\emph{same} $a$.

\subsection{Probabilistic and logical generative systems}
\label{sec:prob-logic-gener}

In this section, we now consider the case where $\mnd T$ is either the
discrete distribution monad $\mnd D$ on $\cat{Set}$, or else the
logical subdistribution monad $\sDb$ on $\cat{Set}_B$.

\emph{Generative $\mnd D$- and $\sDb$-systems}. When
$\mnd T = \mnd D$, a generative $\mnd D$-system is a (non-terminating)
\emph{generative probabilistic system} in the sense
of~\cite{Glabbeek1990Reactive}; its transition map
$\sigma \colon S \rightarrow \D(A \times S)$ associates to each state
a finite probability distribution over output tokens and next states.
On the other hand, when $\mnd T = \sDb$, a generative $\sDb$-system
$\sigma \colon S \rightarrow \sdb(\Sigma_{a \in A} S)$ assigns to each
state $s \in S$ of domain $b$ a finitely supported $B$-valued logical
subdistribution $\omega$ of domain $b$ on the $B$-set $A \times S$
(with domains given by $\abs{(a,s)}_{A \times S} = \abs{s}_S$).

\emph{Behaviours}. The objects of behaviours can calculated, as in
Example~\ref{ex:5}, by following the approach
of~\cite{Aczel1989Final}: we first exhibit a \emph{weakly final}
coalgebra, and then quotient it by bisimulation. For generative
probabilistic systems, we consider the set $\mathsf{Beh}'$ of finitely
branching, purely infinite trees, whose edges are labelled by elements
of $(0,1] \times A$, and where the $(0,1]$-weights on the children of
any vertex sum to $1$. There is an evident structure map
$\beta' \colon \mathsf{Beh}' \rightarrow \D(A \times \mathsf{Beh}')$
and the object of behaviours is its quotient by the largest
bisimulation in the sense of~\cite{Larsen1991Bisimulation}, i.e., the
largest equivalence relation $R$ satisfying
\begin{equation}\label{eq:62}
  x \mathrel{R} y \implies \beta'(x)(\{a\} \times C) = \beta'(y)(\{a\} \times C) \text{ for all } a \in A, \ C \in \mathsf{Beh}' / R\rlap{ .}
\end{equation}

For generative $\sDb$-systems, we take $\mathsf{Beh}'$ to be the
$B$-labelled set of finitely branching, purely infinite trees whose
vertices are labelled by elements of $B \setminus \bot$, whose edges
are labelled in $(B \setminus \bot) \times A$, and where the
$B$-weights on the children of a vertex labelled by $b$ are a finite
partition of $b$; the domain of such a tree is the label of its root.
For the analogous notion of bisimulation to~\eqref{eq:62}, we again
obtain $\mathsf{Beh}$ as the quotient of $\mathsf{Beh}'$ by the
largest bisimulation.

\emph{Behaviour map}. Given a generative probabilistic system
$\sigma \colon S \rightarrow \D(A \times S)$ and $s \in S$, the
behaviour $\mathsf{beh}(s)$ is given coinductively by
\begin{equation*}
  \mathsf{beh}(s) = \!\!\cd[@-1em]{ \mathsf{beh}(t_1) \ar@{-}[dr]_-{(p_1, a_1)}& \dots &
    \mathsf{beh}(t_1) \ar@{-}[dl]^-{(p_n, a_n)} \\ & \bullet } \  \text{where } \sigma(s) = p_1 \cdot (a_1, t_1) + \cdots + p_n \cdot (a_n, t_n)\text{ .}
\end{equation*}
The behaviour map for a generative $\sDb$-system
is entirely analogous.

\emph{Traces}. An $A$-ary comagma in $\cat{Set}^{\mnd{D}}$ is an
abstract convex space $X$ endowed with a convex map
$X \rightarrow X \star \cdots \star X$. In this case, we have the
following characterisation of the \emph{final} such comagma. In the
statement and proof, we make use of the basic clopen sets of
$\mathsf{Stream}(A)$ defined in~\eqref{eq:63}.

\begin{Prop}
  \label{prop:20}
  The final $A$-ary comagma in $\cat{Set}^{\mnd D}$ is the set
  $\R(\mathsf{Stream}(A))$ of (necessarily) Radon probability distributions on
  $\mathsf{Stream}(A)$, endowed with the usual convex space structure
  and with the comagma structure
  \begin{equation*}
    \R(\mathsf{Stream}(A)) \xrightarrow{(\mathsf{hd}, \mathsf{tl})_!} \R(A \times \mathsf{Stream}(A)) \xrightarrow{\cong} \R(\mathsf{Stream}(A)) \star \cdots \star \R(\mathsf{Stream}(A))\rlap{ ;}
  \end{equation*}
  more explictly, this is the comagma structure given by
  \begin{equation*}
    \omega \mapsto \sum_{\substack{a \in A \\ \omega([a]) \neq 0}} \omega([a]) \cdot \iota_a(\overline{\omega_a})
  \end{equation*}
  where $\overline{\omega_a}$ is the distribution with
  $\overline{\omega_a}([a_1 a_2 \cdots]) = \omega([aa_1a_2
  \cdots])/\omega([a])$.
\end{Prop}
The reader should compare this result
to~\cite[Theorem~3.33]{Kerstan2013Coalgebraic} which, among other
things, proves that $\mathsf{Stream}(A)$ is terminal in the category
$\cat{Comag}_A(\cat{Kl}(\mnd P))$, where $\mnd P$ is the probability
monad on the category of measurable spaces. Neither result implies the
other, but one might expect, as a common generalisation, that
$\P(\mathsf{Stream}(A))$ is a final $A$-ary comagma in
$\cat{Meas}^{\mnd P}$.
\begin{proof}
  Let $(S, \sigma \colon S \rightarrow S \star \cdots \star S)$ be an
  $A$-ary comagma in the category of convex spaces. We define a map
  $u \colon S \rightarrow \R(\mathsf{Stream}(A))$ by
  \begin{equation*}
    u(s)([\, ]) = 1 \quad \text{and} \quad
    u(s)([a_0 \cdots a_n]) =
    \begin{cases}
      p_{a_0} \cdot u(s_{a_0})([a_1 \cdots a_n]) & \text{if $a_0
        \in A'$;}\\
      0 & \text{otherwise,}
    \end{cases}
  \end{equation*}
  where here we assume that
  $\sigma(s) = \sum_{a \in A'} p_a \cdot \iota_a(s_a)$ for some
  $A' \subseteq A$. It is easy to see that $u(s)$ extends to a
  finitely additive map on the Boolean algebra of all clopen sets of
  $\mathsf{Stream}(A)$, and so
  by~\cite[Lemma~IV.9.11]{Dunford1958Linear} extends uniquely to a
  probability measure on $\mathsf{Stream}(A)$. So $u$ is well-defined;
  it is now straightforward to verify, following the pattern of
  Proposition~\ref{prop:23}, that it is a convex map and a map of
  $A$-ary comagmas.

  Suppose now that
  $h \colon S \rightarrow \R(\mathsf{Stream}(A))$ is a convex map
  and a map of $A$-ary comagmas; we must show that $u = h$.
  For this it suffices to show that
  \begin{equation}\label{eq:64}
    u(s)([a_0 \cdots a_n]) = h(s)([a_0 \cdots a_n])
  \end{equation}
  for all states $s$ and finite strings $a_0 \cdots a_n \in A^\ast$.
  We do so by induction on $n$. For the base case $n = 0$, suppose
  that $\sigma(s) = \sum_{a \in A'} p_a \cdot \iota_a(s_a)$ as above;
  now as $h$ is a comagma homomorphism, we must have that
  \begin{equation}\label{eq:65}
    \sum_{\substack{a \in A \\ h(s)([a]) \neq 0}} h(s)([a]) \cdot \iota_a(\overline{h(s)_a}) = \sum_{a \in A'}p_a \cdot \iota_a(h(s_a))\rlap{ ;}
  \end{equation}
  so in particular, $u(s)([a]) \neq 0$ iff $a \in A'$ iff
  $h(s)([a]) \neq 0$, and when these equivalent conditions hold,
  we have $u(s)([a]) = p_a = h(s)([a])$, as required.

  For the inductive step, we suppose we have verified the equality for
  $n-1$, and will now verify it for $n$ as in~\eqref{eq:64}. If $a_0
  \notin A'$ then $u(s)([a_0]) = 0 = h(s)([a_0])$ and both
  sides of~\eqref{eq:64} are zero. If $a_0 \in A'$,
  then from~\eqref{eq:65} we must have
  $\overline{h(s)_{a_0}} = h(s_{a_0})$. But since 
  $\overline{h(s)_{a_0}}([a_1 \cdots a_n]) = h(s)([a_0 \cdots a_n]) / h(s)([a_0])$
  we conclude using the inductive hypothesis that
  \begin{align*}
    h(s)([a_0 \cdots a_n]) &= 
    h(s)([a_0]) \cdot h(s_{a_0})([a_1 \cdots a_n]) \\ &= p_{a_0} \cdot u(s_{a_0})([a_1 \cdots a_n]) = u(s)([a_0 \cdots a_n])\rlap{ .} \qedhere
  \end{align*}
\end{proof}

In the logical case, an $A$-ary comagma in $(\cat{Set}_B)^{\sDb}$ is a
sheaf $X$ endowed with a sheaf map
$\mathbf{X} \rightarrow \mathbf{X} \star \cdots \star \mathbf{X}$. An
entirely similar proof to the above shows that:
\begin{Prop}
  \label{prop:22}
  The final $A$-ary comagma in $\cat{Set}_B^{\sDb}$ is the $B$-sheaf
  of continuous functions valued in $\cat{Stream}(A)$. Via Stone
  duality, this is equally well the $B$-sheaf $\mathsf{Tr}$ whose
  elements of domain $b \in B$ are homomorphisms from the Boolean
  algebra of clopen sets of $\mathsf{Stream}(A)$ to the Boolean
  algebra $B/b$ of elements below $b$ in $B$, with comagma structure
  $\mathsf{Tr} \rightarrow \mathsf{Tr} \star \cdots \star \mathsf{Tr}$
  given by
    \begin{equation*}
      \omega \mapsto \sum_{\substack{a \in A \\ \omega([a]) \neq \bot}} \omega([a]) \cdot \iota_a(\omega_a)
    \end{equation*}
    where $\omega_a$ is the homomorphism
    $\mathrm{Clopen}(\mathsf{Stream}(A)) \rightarrow B / \omega([a])$
    defined by
    $\omega_a([a_1 a_2 \cdots]) = \omega([aa_1a_2 \cdots])$.
\end{Prop}

\emph{Trace map}. The following result characterises traces for
probabilistic generative systems. The construction of the
probabilities in~\eqref{prop:25} goes back at least
to~\cite{Jou1990Equivalences}.
\begin{Prop}
  \label{prop:24}
  Let $\sigma \colon S \rightarrow \D(A \times S)$ be a probabilistic
  generative system. The probability distribution $\mathsf{tr}(s)$
  associated to each state $s$ is characterised by
  \begin{equation}\label{prop:25}
    \mathsf{tr}(s)([\,]) = 1 \qquad 
    \mathsf{tr}(s)([a_0 a_1 \cdots a_n]) = \sum_{t \in S} \sigma(s)(a_0, t) \cdot \mathsf{tr}(t)([a_1 \cdots a_n])\rlap{ .}
  \end{equation}
\end{Prop}
\begin{proof}
  Let
  $\overline{\mathsf{tr}} \colon \D(S) \rightarrow
  \R(\mathsf{Stream}(A))$ be the unique extension of $\mathsf{tr}$ to
  a convex map satisfying $\overline{\mathsf{tr}} \circ \eta_S =
  \mathsf{tr}$. It suffices to check that $\overline{\mathsf{tr}}$ is
  a map of $A$-ary comagmas
  $(F^{\mnd D}(S), \sigma^\sharp) \rightarrow (\mathsf{Beh}, \beta)$;
  for then $\overline{\mathsf{tr}}$ is the unique such homomorphism,
  and its precomposition with $\eta_S$, which is $\mathsf{tr}$, is the
  desired trace map. Now, to check that $\overline{tr}$ is a map of
  $A$-ary comagmas, it suffices to do so on Dirac distributions
  $\eta(s) \in \D(S)$: in other words, we need only check
  commutativity in:
  \begin{equation*}
    \cd{
      S \ar[d]_-{\mathsf{tr}} \ar[r]^-{\sigma} & \D(A \times S) \ar[r]^-{\cong} & \D(S) \star \cdots \star \D(S) \ar[d]^-{\overline{\mathsf{tr}} \star \dots \star \overline{\mathsf{tr}}} \\
      \R(\mathsf{Stream}(A)) \ar[r]^-{(\mathsf{hd}, \mathsf{tl})_!} & 
      \R(A \times \mathsf{Stream}(A)) \ar[r]^-{\cong} &
      \R(\mathsf{Stream}(A)) \star \cdots \star \R(\mathsf{Stream}(A))\rlap{ .}
    }
  \end{equation*}
  Around the top of this diagram we have
  \begin{equation*}
    \cd[@-1em]{
      s \ar@{|->}[r]^-{} & \displaystyle \sum_{\substack{a \in A \\ \sigma(s)(a, S) \neq 0}} \sigma(s)(a, S) \cdot \iota_a(\overline{\sigma(s)(a, \thg)}) \ar@{|->}[d]^-{}\\
      & \displaystyle \sum_{\substack{a \in A \\ \sigma(s)(a,S) \neq 0}} \sigma(s)(a, S) \cdot \iota_a\Bigl(\sum_{t \in S} \frac{\sigma(s)(a, t)}{\sigma(s)(a,S)} \cdot \mathsf{tr}(t)\Bigr)
    }
  \end{equation*}
  where we write $\sigma(s)(a,S)$ for $\sum_{t \in S} \sigma(s)(a,t)$;
  while around the bottom we have
  \begin{equation*}
    \cd[@-1em]{
      s \ar@{|->}[d]^-{} \\
      \mathsf{tr}(s) \ar@{|->}[r]^-{} & \displaystyle \sum_{\substack{a \in A\\ \mathsf{tr}(s)([a]) \neq 0}} \mathsf{tr}(s)([a]) \cdot \iota_a(\overline{\mathsf{tr}(s)_a})
    }
  \end{equation*}
  Thus, we need only verify that for all $a \in A$ and $s \in S$ we
  have $\sigma(s)(a, S) =
  \mathsf{tr}(s)([a])$ for all $a,s$---which is clear
  from~\eqref{prop:25}---and that, if $\sigma(s)(a,S) \neq 0$, we have
  \begin{equation*}
\overline{\mathsf{tr}(s)_a} =
  \sum_{t \in S} \frac{\sigma(s)(a, t)}{\sigma(s)(a,S)} \cdot
  \mathsf{tr}(t)\rlap{ .}
\end{equation*}
But evaluating at $[a_1 \cdots a_n]$, this is the
condition that
  \begin{equation*}
    \frac{\mathsf{tr}(s)[aa_1 \cdots a_n]}{\mathsf{tr}(s)([a])} = 
    \sum_{t \in S} \frac{\sigma(s)(a, t)}{\sigma(s)(a,S)} \cdot
    \mathsf{tr}(t)[a_1 \cdots a_n]
  \end{equation*}
  which since $\mathsf{tr}(s)([a]) = \sigma(s)(a, S)$, follows from~\eqref{prop:25}.
\end{proof}
By a corresponding argument, we can show:
\begin{Prop}
  \label{prop:26}
  For a generative $\sDb$-system
  $\sigma \colon S \rightarrow \sdb(\sum_{a \in A} S)$, the trace
  $\mathsf{tr}(s)$ of some $s \in S$ of domain $b$ is the homomorphism
  $\mathrm{Clopen}(\mathsf{Stream}(A)) \rightarrow B/b$ specified on
  the generating basic clopen sets by
  \begin{equation*}
    \mathsf{tr}(s)([\,]) = b \quad \ \ 
    \mathsf{tr}(s)([a_0 a_1 \cdots a_n]) = \bigvee_{t \in S} \sigma(s)(a_0, t) \wedge \mathsf{tr}(t)([a_1 \cdots a_n])\rlap{ .}
  \end{equation*}
\end{Prop}

\emph{Reflection and reification}. In both the examples of this
section, the reflection map $\mathsf{Beh} \rightarrow \mathsf{Tr}$ is
the trace map of the object of behaviours. As for the reification
maps, we concentrate on the probabilistic case; the logical case is
entirely analogous. The probabilistic generative system
structure $\mathsf{Tr} \rightarrow \D(A \times \mathsf{Tr})$ on the
object of traces is given by
  \begin{equation*}
    \omega \mapsto \sum_{\substack{a \in A \\ \omega([a]) \neq 0}} \omega([a]) \cdot (a,\overline{\omega_a})\rlap{ ,}
  \end{equation*}
  and as such, the reflection map
  $\mathsf{Tr} \rightarrow \mathsf{Beh}$ realises each trace as the
  behaviour which at each step makes the minimal specialisation of the
  sample space required to resolve the next output token. For example,
  we may consider the following refinement of the example from
  Section~\ref{sec:norm-trace-eval}:
\begin{align*}
  \sigma \colon \{s,t,u\} &\rightarrow \D(\{0,1\} \times \{s, t,u\})&
  \!\!\!\!\!\!\!\tau \colon \{s',t',u'\} &\rightarrow \D(\{0,1\} \times \{s',t',u'\})\\
  s & \mapsto \tfrac 1 3 \cdot (0,t) + \tfrac 1 3 \cdot (1,t) + \tfrac 1 3 \cdot (1,u)  & s' & \mapsto \tfrac 1 3 \cdot (0,t') + \tfrac 2 3 \cdot (1,u')\\
  t & \mapsto 1 \cdot (0,t) & t' & \mapsto 1 \cdot (0,t')\\
  u & \mapsto 1 \cdot (1,t) & u' &\mapsto \tfrac 1 2 \cdot (0,t') + \tfrac 1 2 \cdot (1,t')\rlap{ .}
\end{align*}
The traces of $s$ and $s'$ are both the probability distribution which
picks uniformly between the infinite binary strings $00\dots$,
$100\dots$ and $1100\dots$; however, their respective behaviours are
the trees
\begin{equation*}
  \cd[@-0.5em]{
  \vdots & \vdots & \vdots\\
  \bullet \ar@{-}[u]^-{(1,0)} &  
  \bullet \ar@{-}[u]|-{(1,0)} &
  \bullet \ar@{-}[u]_-{(1,0)} \\
  \bullet \ar@{-}[u]^-{(1,0)} &  
  \bullet \ar@{-}[u]|-{(1,0)} &
  \bullet \ar@{-}[u]_-{(1,1)} \\
  & \bullet \ar@{-}[ul]^-{(\tfrac 1 3,0)} \ar@{-}[u]|-{(\tfrac 1 3,1)} \ar@{-}[ur]_-{(\tfrac 1 3,1)}
} \qquad \text{and} \qquad 
  \cd[@-0.5em]{
  \vdots & \vdots & \vdots\\
  \bullet \ar@{-}[u]^-{(1,0)} &  
  \bullet \ar@{-}[u]|-{(1,0)} &
  \bullet \ar@{-}[u]_-{(1,0)} \\
  \bullet \ar@{-}[u]^-{(1,0)} &  
  \bullet \ar@{-}[u]|-{(\tfrac 1 2, 0)} \ar@{-}[ur]_-{(\tfrac 1 2, 1)} \\
  & \bullet \ar@{-}[ul]^-{(\tfrac 1 3, 0)} \ar@{-}[u]_-{(\tfrac 2 3, 1)} 
  }
\end{equation*}
which are \emph{not} bisimilar; but nonetheless, applying the
normalisation function $\mathsf{reify} \circ \mathsf{reflect}$ sends
$\mathsf{beh}(s)$ to $\mathsf{beh}(s')$.

\subsection{Concluding remarks}
\label{sec:concluding-remarks}

The application developed in this section can be extended in two
directions. On the one hand, we can apply it as-is to other examples
of monads which admit hypernormalisation, for instance the
continuous probability monads of Section~\ref{sec:examples}.

More interestingly, we can change the kind of automaton we consider.
We considered \emph{generative} $\mnd T$-systems, which produce a
stream of output values with as only external stimulus the
computational effects encoded by the monad $\mnd T$. However, we can
just as easily consider automata that both consume and produce tokens:
indeed, if we define a \emph{Mealy $\mnd T$-machine} with input
alphabet $I$ and output alphabet $O$ to be a map of the form
$\Sigma_{i \in I}S \rightarrow T(\Sigma_{o \in O} S)$, then the
notions of behaviour, trace, reflection and reification given above
adapt unproblematically.


However, there are other directions in which we might wish to
generalise our automata. Most obviously, we might wish to introduce
the possibility of termination, by looking at coalgebras of the form
$S \rightarrow T(A \times S + 1)$ or
$S \rightarrow T(A \times S) + 1$. To capture the notion of behaviour
and trace in such situations, we must look not at $A$-ary comagmas but
\emph{comodels} of more general algebraic theories, which involve not
only ``cooperations'' $S \rightarrow \sum_{a \in A} S$ but also
equations between derived cooperations; see, for
example~\cite{Power2004From, Uustalu2015Stateful, Ahman2020Runners,
  Garner2020The-costructure-cosemantics}. \emph{Prima facie}, it is
not at all clear that our normalisation-by-trace-evaluation extends to
this context, since coequations between derived operations do not play
well with the hypernormalisation structure. However, this is not to
say that a more nuanced approach might not be possible: but this is to
be left for further work.

\bibliographystyle{acm}
\bibliography{bibdata}

\end{document}